\theoremstyle{theorem}
\newtheorem{theorem}{Theorem}[section]
\newtheorem{definition}[theorem]{Definition}
\newtheorem{corollary}[theorem]{Corollary}
\newtheorem{proposition}[theorem]{Proposition}
\newtheorem{lemma}[theorem]{Lemma}
\theoremstyle{definition}
\newtheorem{example}[theorem]{Example}
\newtheorem{remark}[theorem]{Remark}
\newcommand{\R}{\mathbb{R}}
\newcommand{\lip}{{\rm Lip}}
\begin{document}

\title[Annealed and quenched limit theorems for expanding RDS]{Annealed and quenched limit theorems for random expanding dynamical systems}

\author{Romain Aimino}\address{
Aix Marseille Universit\'e, CNRS, CPT, UMR 7332, 13288 Marseille, France
Universit\'e de Toulon, CNRS, CPT, UMR 7332, 83957 La Garde, France
}
\email{\href{mailto:aimino@cpt.univ-mrs.fr}{aimino@cpt.univ-mrs.fr}}
\urladdr{\url{http://www.cpt.univ-mrs.fr/~aimino/}}

\author{Matthew Nicol}\address{
Department of Mathematics,
University of Houston,
Houston Texas,
USA}
\email{\href{mailto:nicol@math.uh.edu}{nicol@math.uh.edu}}
\urladdr{\url{http://www.math.uh.edu/~nicol/}}

\author{Sandro Vaienti}\address{
Aix Marseille Universit\'e, CNRS, CPT, UMR 7332, 13288 Marseille, France
Universit\'e de Toulon, CNRS, CPT, UMR 7332, 83957 La Garde, France
}
\email{\href{mailto:vaienti@cpt.univ-mrs.fr}{vaienti@cpt.univ-mrs.fr}}
\urladdr{\url{http://www.cpt.univ-mrs.fr/~vaienti/}}

\date{\today}

\thanks{RA was partially supported by Conseil R\'egional  Provence-Alpes-C\^ote d'Azur. RA and SV were supported by the ANR-
Project {\em Perturbations} and by
the PICS (Projet International de Coop\'eration Scientifique), Propri\'et\'es statistiques des syst\`emes dynamiques deterministes et al\'eatoires, with the University of Houston, n. PICS05968. SV thanks the University of Houston for supporting his visits during the completion of this work. MN was supported by the French CNRS with a {\em poste d'accueil} position  at the Center of Theoretical Physics in Luminy.
 MN was partially supported by NSF grant DMS 1101315. RA wishes to thank Carlangelo Liverani for discussions and encouragements.}

\maketitle
\begin{abstract}
In this paper, we investigate annealed and quenched limit theorems for random expanding dynamical systems. Making use of functional analytic techniques and more probabilistic arguments with martingales, we prove annealed versions of a central limit theorem, a large deviation principle, a local limit theorem, and an almost sure invariance principle. We also discuss the quenched central limit theorem, dynamical Borel-Cantelli lemmas, Erd\"{o}s-R\'enyi laws and concentration inequalities.
\end{abstract}


~
\\ 

{\small Appeared online on {\em Probability Theory and Related Fields}. The final publication is available at Springer via {\url{http://dx.doi.org/10.1007/s00440-014-0571-y}}.}

\section{Introduction}
\subsection{Limit theorems for Random Dynamical Systems: a brief survey}

Statistical properties for deterministic uniformly expanding dynamical systems are by now pretty well understood, starting from the existence of an absolutely continuous invariant probability \cite{LY, GB, S}, to exponential decay of correlations and limit theorems  \cite{HK, R-E, GH}, and more refined properties, such as Erd\"{o}s-R\'enyi laws \cite{CC,DN}, dynamical Borel-Cantelli lemmas \cite{CK, Kim} and concentration inequalities \cite{CMS, CG}. Most of these results are derived from the existence of a spectral gap of the transfer operator of the system, when acting on a appropriately chosen Banach space. The books \cite{BG_book} and \cite{Bal00} contain a nice overview and historical perspectives on the subject.

For random dynamical systems, the understanding of the situation is still unsatisfactory. A random dynamical system can be seen as a random composition of maps acting on the same space $X$, where maps are chosen according to a stationary process. When the process is a sequence of independent random maps, this gives rise to a Markov chain with state space $X$. The iid setting has been extensively studied in the book of Kifer \cite{K86}, while the general case is treated in the book of Arnold \cite{Ar}. The relevance of random dynamical systems is obvious from  the  fact that in most physical applications, it is very unlikely that the same map is iterated all along the time, and it is rather the case that different maps, very close to a fixed one, are iterated randomly. This topic of stochastic perturbations is very well covered in \cite{K88}. Random dynamical systems also arise naturally in the field of particle systems on lattices, where on a single site, the particle is subject to a local deterministic dynamic, but can jump from one site to another randomly, see \cite{KY, Tum}.

The existence of stationary measures absolutely continuous with respect to Lebesgue measure was first studied by Pelikan \cite{Pel} and Morita \cite{Mor1} in the case where $X$ is the unit interval, and in the thesis of Hsieh \cite{Hsieh} for the multidimensional case. This question has also been investigated for one-dimensional and multidimensional systems in the case of position dependent probabilities, see \cite{BaGo} and references therein. For limit theorems, the available literature is much more sparse. It should be first stressed that for random systems, limit theorems are of two kind : annealed results concern properties related to the skew-product dynamics, while quenched results describe properties for almost every realization. Annealed results follow from the spectral analysis of an annealed transfer operator, generalizing the successful approach for deterministic systems. In this line of spirit, we can cite the papers of Baladi \cite{Bal97}, Baladi-Young \cite{BY} or Ishitani \cite{Ishi} and the thesis \cite{Tum}. Quenched results are usually more difficult to prove. Exponential decay of correlation in a quenched regime has been proved using Birkhoff cones technique in \cite{BKS, Buz99, K08}, while a quenched central limit theorem and a law of iterated logarithm are studied by Kifer \cite{K98}, using a martingale approximation. These results deal with more general stationary process, where absolutely continuous stationary measure can fail to exist and are replaced by a family of sample measures. Closer to our setting are the papers \cite{AS, ALS} which are concerned with random toral automorphisms, and a very recent work of M. Stenlund and H. Sulky (A coupling approach to random circle maps expanding on the average, preprint, 2013), where quenched exponential decay of correlations together with an annealed almost sure invariance principle are shown for iid expanding circle maps, using the coupling method.

\subsection{Limit theorems: our new results}

When the random dynamical system is contracting on average, the transition operator of the Markov chain admits a spectral gap on a space of Hölder functions, from which one can deduce a large span of limit theorems following the Naga\"{e}v's method, see for instance \cite{HH2} and references therein. 
Nevertheless, for the applications we have in mind, the maps will instead be expanding on average. In this situation, the transition operator generally fails to admit a spectral gap and we will preferably rely on the quasi-compactness of an associated annealed transfer operator on an appropriate Banach space.
In this paper, we provide an abstract functional framework, valid for several one dimensional and multidimensional systems, under which annealed limit theorems hold for smooth enough observables. More precisely, under a spectral gap assumption for the annealed transfer operator, we apply Naga\"{e}v's perturbative method to obtain a central limit theorem with rate of convergence and a large deviation principle. A Borel-Cantelli argument allows us to derive immediately a quenched upper bound for the large deviation principle, but the question of whether a quenched lower bound holds remains open. We also show a local limit theorem under an abstract aperiodicity condition, and relate in most practical cases this condition to the usual one for individual maps. We apply Gou\"{e}zel's spectral method to prove an annealed almost sure invariance principle for vector valued observables : this is a strong reinforcement of the central limit theorem which has many  consequences, such as the law of the iterated logarithm, the  functional central limit theorem, and the  almost sure central limit theorem \cite{LP}.

Changing slightly our approach, we then adapt the martingale approximation method, which goes back to Gordin \cite{Go}, and give an alternative proof of the annealed central limit theorem. This requires the introduction of a symbolic deterministic system on which the standard martingale procedure can be pursued. Decay of annealed correlations is the key ingredient here and allows us to show that the Birkhoff's sums can be written as the sum of a backwards martingale and a coboundary, from which the central limit theorem follows from the analogous result for martingales.

We next investigate dynamical Borel-Cantelli lemmas : if $(f_n)$ is a bounded sequences of positive functions lying in the functional space, such as $\sum_n \int f_n d \mu = \infty$, where $\mu$ is the stationary measure, we prove that $$ \frac{\sum_{k=0}^{n-1} f_k(T_{\underline{\omega}}^k x)}{\sum_{k=0}^{n-1} \int f_k d \mu} \to 1,$$ for almost every realization $\underline{\omega}$ and almost every point $x \in X$, a property usually called strong Borel-Cantelli lemma in the literature. Of particular interest is the case where $f_n$ are the characteristic functions of a sequence of decreasing sets, since this relates to recurrence properties of the system. The proof builds upon annealed decay of correlations, and is a consequence of the work of Kim \cite{Kim}. This result can be seen as a generalization of the strong law of large numbers, and it is hence natural to study the nature of the fluctuations in this convergence. Provided we have precise enough estimates on the measure of the sets, we prove a central limit theorem. For this purpose, we employ the martingale technique already used before for Birkhoff sums, and make use of a central limit theorem for non stationary martingales from Hall and Heyde \cite{Hall_Heyde}, mimicking the proof from \cite{HNVZ} for the deterministic case.

We then turn to Erd\"{o}s-R\'enyi laws : these limit laws give information on the maximal average gain in the case where the length of the time window ensures there is a non-degenerate limit. This result was first formulated by Erd\"{o}s and R\'enyi \cite{ER}, and brought in dynamical context in \cite{CC, DN,Kessebohmer} among others. Making use of the large deviation principle, we adapt the proof of \cite{DN} to show that an annealed Erd\"{o}s-R\'enyi law holds true in the random situation, for one-dimensional transformations.

Importing a technique from the field of random walks in random environments, Ayyer, Liverani and Stenlund \cite{ALS} proved a quenched central limit theorem for random toral hyperbolic automorphisms. Their approach consists in proving a spectral gap for the original system and for a "doubled" system acting on $X^2$, where maps are given by $\hat{T}_{\omega} (x,y) = (T_{\omega} x, T_{\omega} y)$, and driven by the same iid process. This allows to prove a quenched central limit theorem for subsequences of the Birkhoff sums by a Borel-Cantelli argument, and the large deviation principle helps to estimate the error occurring in the gaps. Unfortunately, this method needs a precise relation between the asymptotic variance of the observable on the original system, and the asymptotic variance of a deduced observable on the doubled system. This relation is easily shown when all maps preserve the same measure, as it is the case in \cite{ALS}, but is harder to prove, and possibly false, in full generality. Hence, in this paper, we restrict our attention to the case where all maps preserve the Lebesgue measure on the unit interval. Apart the trivial case where all maps are piecewise onto and linear, we show that we can include in the random compositions a class of maps introduced in \cite{CHMV}, which have a neutral fixed point and a point where the derivative blows up. The general case remains open.

Concentration inequalities are a well known subject in probability theory, and have numerous and deep consequences in statistics. They concern deviations from the mean for non additive functionals and hence generalize large deviations estimates for ergodic sums. Furthermore, these inequalities are non-asymptotic. The price to pay is that they do not give precise asymptotics for the deviation function, in contrast to  the large deviations principle. They were introduced in dynamical systems by Collet, Martinez and Schmitt \cite{CMS} who prove an exponential inequality for uniformly piecewise expanding maps of the interval. The paper \cite{CG} covers a wide range of uniformly and non-uniformly expanding/hyperbolic dynamical systems which are modeled by a Young tower. For random dynamical systems, concentration inequalities were not previously studied. As far as the authors know, the only result available is \cite{Mal}, which covers the case of the observational noise. We attempt to fill this gap and prove an annealed exponential concentration inequality for randomly expanding systems on the interval, generalizing the approach of \cite{CMS}. We then give an application to the rate of convergence of the empirical measure to the stationary measure.

\subsection{Plan of the paper}
The paper is outlined as follows. In section \ref{framework}, we described our abstract functional framework, and give several classes of one dimensional and multidimensional examples which fit the assumptions. In section \ref{spectral} we apply Naga\"{e}v method to prove annealed limit theorems. In section \ref{clt_martingale}, we explain how the central limit theorem follows from a martingale approximation. In section \ref{borel_cantelli}, we prove dynamical Borel-Cantelli lemmas and a central limit theorem for the shrinking target problem. In section \ref{erdos_renyi}, we prove an Erd\"{o}s-R\'enyi law for random one-dimensional systems. In section \ref{quenched_clt}, we consider the quenched central limit theorem for specific one dimensional random systems. Finally, in section \ref{concentration}, we prove an exponential concentration inequality and discuss its applications.

~

The letter $C$ denotes a positive constant whose precise value has no particular importance and can change from one line to another.

\section{Abstract framework and examples} \label{framework}

Let $(\tilde{\Omega}, \tilde{\mathcal{T}}, \tilde{\mathbb{P}})$ be a probability space, and $\theta : \tilde{\Omega} \to \tilde{\Omega}$ be a measure preserving transformation. Let now $(X, \mathcal{A})$ be a measurable space. Suppose that to each $\underline{\omega} \in \tilde{\Omega}$ is associated a transformation $T_{\underline{\omega}} : X \to X$ such that the map $(\underline{\omega}, x) \mapsto T_{\underline{\omega}}(x)$ is measurable. We are then considering random orbits $T_{\theta^n \underline{\omega}} \circ \ldots \circ T_{\underline{\omega}} x$.

One can now define a skew-product transformation $F : \tilde{\Omega} \times X \to \tilde{\Omega} \times X$ by $F(\underline{\omega}, x) = (\theta \underline{\omega}, T_{\underline{\omega}} x)$. We will say that a probability measure $\mu$ on $(X,\mathcal{A})$ is a stationary measure if $\tilde{\mathbb{P}} \otimes \mu$ is invariant under $F$.

The simplest situation possible is the i.i.d. case : $(\tilde{\Omega}, \tilde{\mathcal{T}}, \tilde{\mathbb{P}})$ is a countable product space, namely $\tilde{\Omega} = \Omega^{\mathbb{N}}$, $\tilde{\mathcal{T}} = \mathcal{T}^{\otimes \mathbb{N}}$, $\tilde{\mathbb{P}} = \mathbb{P}^{\otimes \mathbb{N}}$ and $\theta$ is the full shift. If to each $\omega \in \Omega$ is associated a map $T_{\omega}$ on $X$ such that $(\omega,x) \mapsto T_{\omega}(x)$ is measurable, then we define $T_{\underline{\omega}} = T_{\omega_1}$ for each $\underline{\omega} \in \tilde{\Omega}$, with $\underline{\omega} = (\omega_1, \omega_2, \ldots)$. This fits the framework described previously. It is easily seen that $\mu$ is a stationary measure iff $\mu(A) = \int_{\Omega} \mu(T_{\omega}^{-1}(A)) \, d\mathbb{P}(\omega)$ for each $A \in \mathcal{A}$.
Moreover, if we set $X_n(\underline{\omega}, x)= T_{\theta^n \underline{\omega}} \circ \ldots \circ T_{\underline{\omega}} x,$ this defines a homogeneous Markov chain with state space  $(X, \mathcal{A})$ and transition operator given by $U(x, A)= \mathbb{P}(\{ \omega : T_{\omega}x \in A\})$ for any $x \in X$ and any set $A \in \mathcal{A}$. 

From now, we will always consider this i.i.d. situation. Suppose now that $(X,\mathcal{A})$ is endowed with a probability measure $m$ such that each transformation $T_{\omega}$ is non-singular w.r.t. $m$. We will investigate existence and statistical properties of stationary measures absolutely continuous w.r.t. $m$. To this end, we introduce averaged transfer and Koopman operators.

Since every transformation $T_\omega$ is non-singular, the transfer operator $P_\omega$ and the  Koopman operator $U_\omega$ of $T_\omega$ are well defined, and act respectively on $L^1(m)$ and $L^{\infty}(m)$. We recall their definitions for the convenience of the reader, and refer to \cite{BG_book} and \cite{Bal00} for more properties. The Koopman operator of $T_{\omega}$ is acting on $L^{\infty}(m)$ by $U_{\omega} f = f \circ T_{\omega}$. Its action on conjugacy classes of functions is well defined since $T_{\omega}$ is non-singular w.r.t. $m$. The transfer operator, or Perron-Frobenius operator is acting on $L^1(m)$ in the following way : for $f \in L^1(m)$, define the complex measure $m_f$ by $d m_f = f dm$. Then $P_{\omega} f$ is defined to be the Radon-Nykodim derivative of the push-forward measure $T_{\star} m_f$ w.r.t. $m$, which is well defined by non-singularity. The main relation between these two operators is given by the duality formula $\int_X P_{\omega}f (x) g(x) dm(x) = \int_X f(x) U_{\omega}g(x) dm(x)$, which holds for all $f\in L^1(m)$ and $g \in L^{\infty}(m)$.

We can now defined the averaged versions of these operators. For $f \in L^1(m)$, we define $Pf$ by the formula $Pf(x) = \int_{\Omega} P_\omega f(x) \,d \mathbb{P}(\omega)$, and for $g \in L^{\infty}(m)$, we define $Ug$ by $Ug(x) = \int_\Omega U_\omega g(x) \, d\mathbb{P}(\omega)$. The operator $U$ just defined coincides with the transition operator $U$ of the Markov chain $(X_n),$ when acting on functions. Notice that for all $n \ge 0$ and $g \in L^{\infty}(m)$, one has $U^n g(x) = \int_{\Omega^n} g(T_{\omega_n} \ldots T_{\omega_1} x) \, d\mathbb{P}^{\otimes n} (\omega_1, \ldots, \omega_n) = \int_{\tilde{\Omega}} g(T_{\omega_n} \ldots T_{\omega_1} x) \, d \tilde{\mathbb{P}}(\underline{\omega})$, because $\tilde{\mathbb{P}}$ is a product measure.  It is then straightforward to check that $U$ is the dual operator of $P$, that is $\int_X Pf(x) g(x) \, dm(x) = \int_X f(x) Ug(x) \, dm(x)$ for all $f \in L^1(m)$ and $g \in L^{\infty}(m)$. An absolutely continuous probability measure is stationary iff its density is a fixed point of $P$.

We will assume that $P$ has good spectral properties on some Banach space of functions. More precisely, we assume that there exists a Banach space $(\mathcal{B}, \| . \|)$ such that :

\begin{enumerate}

\item \label{compact_embed} $\mathcal{B}$ is compactly embedded in $L^1(m)$;

\item \label{1_in_B} Constant functions lie in $\mathcal{B}$;


\item \label{banach_lattice} $\mathcal{B}$ is a complex Banach lattice : for all $f \in \mathcal{B}$, $\left | f \right|$ and $\bar{f}$ belong to $\mathcal{B}$;

\item \label{Stab} $\mathcal{B}$ is stable under $P$ : $P(\mathcal{B}) \subset \mathcal{B}$, and $P$ acts continuously on $\mathcal{B}$;

\item \label{LY} $P$ satisfies a Lasota-Yorke inequality : there exist $N \ge 1$, $\rho < 1$ and $K \ge 0$ such that $\| P^N f \| \le \rho \| f \| +K \|f\|_{L^1_m}$ for all $f \in \mathcal{B}$.

\end{enumerate}

The LY inequality implies in particular that the spectral radius of $P$ acting on $\mathcal{B}$ is less or equal than $1$, and since $m$ belongs to the topological dual of $\mathcal{B}$ by the first assertion, and is fixed by $P^{\star}$ the adjoint of $P$, we have that the spectral radius is $1$. Hence, by Ionescu-Tulcea and Marinescu's theorem \cite{ITM} (see also \cite{H}), we have that the essential spectral radius of $P$ is less or equal than $\rho < 1$, implying that $P$ is quasi-compact on $\mathcal{B}$, since it has spectral radius $1$. A standard argument using compactness proves that $P^N$ and hence $P$ has a positive fixed point : there is an element $h \in \mathcal{B}$ with $Ph = h$, $h \ge 0$ and $\int_X h \, dm = 1$. As a consequence $1$ is an eigenvalue of $P$. Another consequence of quasi-compactness is the fact that the spectrum of $P$ is constituted of a finite set of eigenvalues with modulus 1 of finite multiplicity and the remaining spectrum is contained in a disk of radius strictly less than $1$. We will make the following assumption, that prevents the possibility of peripheral spectrum :

\begin{enumerate}
\item[6.] \label{spectral_gap} $1$ is a simple (isolated) eigenvalue of $P$, and there is no other eigenvalue on the unit circle.
\end{enumerate}

This assumption implies in particular that the absolutely continuous stationary measure is unique. We will denote it by $\mu$, and its density by $h$, throughout the paper.

Usually, assertions \ref{Stab} and \ref{LY} can be deduced from corresponding assertions for the operators $P_\omega$ if the constants appearing in the Lasota-Yorke inequality are uniform. Nevertheless, they can be established even if one of the maps $T_{\omega}$ is not uniformly expanding, as showed by the following class of examples :

\begin{example}[Piecewise expanding one-dimensional maps] ~  \\
A Lasota-Yorke map is a piecewise $C^2$ map $T : [0,1] \to [0,1]$ for which $\lambda(T) := \inf | T' | > 0$.

We denote by $P_T$ the transfer operator (with respect to Lebesgue measure) associated to $T$. One has $$P_Tf(x) = \sum_{Ty = x} \frac{f(y)}{|T'(y)|}$$ for all $f \in L^1(m)$. We will analyze the spectral properties of $P_T$ acting on the space of functions of bounded variation. We recall the definition. A function $f : [0,1] \to \mathbb{C}$ is of bounded variation if its total variation defined as $${\rm Var}(f) = \sup \sum_{i=0}^{n-1} |f(x_{i+1}) - f(x_i)|,$$ where the supremum is taken over all the finite partitions $0 = x_0 < \ldots < x_n = 1$, is finite.

For a class of equivalence $f \in L^1(m)$, we then define $${\rm Var}(f) = \inf \{ {\rm Var}(g) \, / \, f =g ~ m {\rm -ae} \}.$$

The space ${\rm BV} = \{f \in L^1(m)\, / \, {\rm Var}(f) < \infty\}$ is endowed with the norm $\|f \| = \|f\|_{L^1_m} + {\rm Var}(f)$, which turns it into a Banach space satisfying assumptions \ref{compact_embed}, \ref{1_in_B} and \ref{banach_lattice} above. Furthermore, this is a Banach algebra which embeds continuously into $L^{\infty}(m)$.

If $T$ is a Lasota-Yorke map, the following inequality holds :

\begin{proposition}[Lasota-Yorke inequality \cite{LY}]  For any $f \in {\rm BV}$, we have $${\rm Var}(P_T f) \le \frac{2}{\lambda(T)} {\rm Var}(f) + A(T) \|f\|_{L^1_m}$$ where $A(T)$ is a finite constant depending only on $T$.

\end{proposition}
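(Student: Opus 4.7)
Plan. The proof is the classical one of Lasota and Yorke. Let $0=a_0<a_1<\cdots<a_N=1$ be the endpoints of the monotonicity intervals $I_i=(a_{i-1},a_i)$ of $T$, set $J_i=T(I_i)$, and let $\phi_i:J_i\to I_i$ be the local $C^2$ inverse. The explicit formula for the transfer operator yields the branch decomposition
$$P_T f(x)=\sum_{i=1}^N h_i(x)\mathbf{1}_{J_i}(x),\qquad h_i=\frac{f\circ\phi_i}{|T'|\circ\phi_i}.$$
By subadditivity of total variation, ${\rm Var}(P_T f)\le\sum_i{\rm Var}(h_i\mathbf{1}_{J_i})$, so the problem reduces to an estimate for a single branch.

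For each branch I would use the identity ${\rm Var}(u\mathbf{1}_J)={\rm Var}_J(u)+|u(J^-)|+|u(J^+)|$, with $J^\pm$ denoting the one-sided endpoint limits, splitting each summand into an interior piece and two boundary values. Changing variables via $\phi_i$ converts the interior piece into ${\rm Var}_{I_i}(f/|T'|)$ and the boundary pieces into $|f(a_{i-1}^+)|/|T'(a_{i-1}^+)|$ and $|f(a_i^-)|/|T'(a_i^-)|$. Standard pointwise estimates then take over: the Leibniz inequality
$${\rm Var}_{I_i}(f/|T'|)\le\|1/|T'|\|_{\infty,I_i}\,{\rm Var}_{I_i}(f)+\|f\|_{\infty,I_i}\,{\rm Var}_{I_i}(1/|T'|),$$
combined with $\|1/|T'|\|_{\infty,I_i}\le 1/\lambda(T)$ and ${\rm Var}_{I_i}(1/|T'|)\le\int_{I_i}|T''|/(T')^2\,dm\le\|T''\|_\infty/\lambda(T)^2$ (finite since $T$ is piecewise $C^2$), controls the interior, while for the boundary we simply use $|f(a^\pm)|/|T'(a^\pm)|\le|f(a^\pm)|/\lambda(T)$.

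The residual $\|f\|_{\infty,I_i}$ and one-sided values of $f$ are then converted into the norms appearing in the target inequality via the elementary embedding $\sup_I|f|,\,|f(a^+)|,\,|f(b^-)|\le{\rm Var}_I(f)+|I|^{-1}\int_I|f|\,dm$, valid on any open interval $I=(a,b)$. Summing over $i$, the local variations ${\rm Var}_{I_i}(f)$ recombine into a multiple of ${\rm Var}(f)$ (each interior partition point $a_j$ being shared by two adjacent branches), while every constant depending on $N$, $\min_i|I_i|$, $\|T''\|_\infty$ and $\lambda(T)$ collects into a single coefficient $A(T)$ in front of $\|f\|_{L^1_m}$.

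The main obstacle is the precise bookkeeping needed to secure the coefficient $2/\lambda(T)$ in front of ${\rm Var}(f)$ rather than a larger multiple of $1/\lambda(T)$. A naive combination of the interior Leibniz step and of the two boundary contributions per branch would yield a coefficient of the form $3/\lambda(T)+C\max_i{\rm Var}_{I_i}(1/|T'|)$. The sharper Lasota-Yorke bound is recovered either by passing to a sufficiently refined partition, on which the oscillation of $1/|T'|$ becomes arbitrarily small, or by reorganising the argument so that the subleading variation contribution arising from the $\|f\|_{\infty,I_i}$ terms is entirely absorbed into the $\|f\|_{L^1_m}$ part, together with the use of $|f(a^+)|+|f(a^-)|\le2|f(y)|+2{\rm Var}_{I_j\cup I_{j+1}}(f)$-type estimates near each shared partition point.
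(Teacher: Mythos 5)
The paper does not prove this proposition; it quotes it from \cite{LY}, so your attempt has to be measured against the classical Lasota--Yorke argument, and there the decisive step is precisely the one you leave open. Your skeleton (branch decomposition, subadditivity, change of variables, Leibniz rule plus endpoint terms) is the right one, and your computation of the ``naive'' coefficient $3/\lambda(T)+C\max_i{\rm Var}_{I_i}(1/|T'|)$ is accurate; but neither of the two fixes you propose actually delivers $2/\lambda(T)$. Refining the partition only shrinks the per-piece variation of $1/|T'|$; it does nothing to the $3/\lambda(T)$, because that $3$ comes from bounding each of the two endpoint values of a branch \emph{separately} by ${\rm Var}_I(f)+|I|^{-1}\int_I|f|\,dm$, a count that is invariant under refinement. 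The alternative estimate you suggest, grouped at shared partition points, $|f(a^+)|+|f(a^-)|\le 2|f(y)|+2{\rm Var}_{I_j\cup I_{j+1}}(f)$, makes matters worse when summed: each $I_j$ occurs in two such unions, so the boundary terms alone already cost $\tfrac{4}{\lambda(T)}{\rm Var}(f)$. Finally, a term of the form ${\rm Var}_{I_i}(f)\,{\rm Var}_{I_i}(1/|T'|)$ cannot simply be ``absorbed'' into $\|f\|_{L^1_m}$ after the fact; one has to avoid producing it.

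The two missing ingredients of the classical proof are: (i) the \emph{per-branch} endpoint estimate $|f(a_{i-1}^+)|+|f(a_i^-)|\le 2|f(y)|+{\rm Var}_{I_i}(f)$ for any $y\in I_i$, whence, choosing $y$ with $|f(y)|\le |I_i|^{-1}\int_{I_i}|f|\,dm$, both endpoints of one branch together cost a \emph{single} ${\rm Var}_{I_i}(f)$ plus $\tfrac{2}{|I_i|}\int_{I_i}|f|\,dm$; and (ii) the cross term of the Leibniz rule must go directly into the $L^1$ part rather than through $\|f\|_{\infty,I_i}{\rm Var}_{I_i}(1/|T'|)$: for absolutely continuous $f$,
$${\rm Var}_{I_i}\Bigl(\tfrac{f}{|T'|}\Bigr)\le \int_{I_i}\frac{|f'|}{|T'|}\,dm+\int_{I_i}|f|\,\Bigl|\bigl(\tfrac{1}{|T'|}\bigr)'\Bigr|\,dm\le \frac{1}{\lambda(T)}{\rm Var}_{I_i}(f)+\frac{\sup|T''|}{\lambda(T)^2}\int_{I_i}|f|\,dm,$$
and the general BV case follows by approximation. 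With (i) and (ii), each branch contributes $\tfrac{1}{\lambda(T)}{\rm Var}_{I_i}(f)$ from the interior and $\tfrac{1}{\lambda(T)}{\rm Var}_{I_i}(f)$ from the endpoints, and summing over branches gives exactly $\tfrac{2}{\lambda(T)}{\rm Var}(f)+A(T)\|f\|_{L^1_m}$ with $A(T)\le \sup|T''|/\lambda(T)^2+2/(\lambda(T)\min_i|I_i|)$. (For the paper's subsequent use in Proposition \ref{gapLY}, where the inequality is applied directly to the composed maps $T_{\underline{\omega}}^n$, any universal constant in place of $2$ would in fact suffice; but the statement asserts $2$, and as written your sketch does not reach it.)
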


Let $\Omega$ be a finite set, together with a probability vector $\mathbb{P} = \{p_{\omega}\}_{\omega \in \Omega}$ and a finite number of Lasota-Yorke maps $T = \{T_{\omega}\}_{\omega \in \Omega}$. We assume that $p_{\omega}>0$ for all $\omega \in \Omega$. The system $(\Omega, \mathbb{P}, T)$ is called a random Lasota-Yorke system.

The random Lasota-Yorke system $(\Omega, \mathbb{P}, T)$ is expanding in mean if $$\Lambda := \sum_{\omega \in \Omega} \frac{p_{\omega}}{\lambda(T_{\omega})} < 1.$$

The annealed transfer operator associated to $(\Omega, \mathbb{P}, T)$ is $P = \sum_{\omega \in \Omega} p_{\omega} P_{T_{\omega}}$. It satisfies $P^n = \sum_{\underline{\omega} \in \Omega^n} p_{\underline{\omega}}^n P_{T_{\underline{\omega}}^n}$, where $\underline{\omega} = (\omega_1, \ldots, \omega_n)$, $p_{\underline{\omega}}^n = p_{\omega_1} \ldots p_{\omega_n}$ and $T_{\underline{\omega}}^n = T_{\omega_n} \circ \ldots \circ T_{\omega_1}$.

\begin{proposition} \label{gapLY}
If $(\Omega, \mathbb{P}, T)$ is expanding in mean, then some iterate of the annealed transfer operator satisfies a Lasota-Yorke inequality on ${\rm BV}$.
\end{proposition}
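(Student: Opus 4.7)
The plan is to exploit the identity $P^n = \sum_{\underline{\omega} \in \Omega^n} p^n_{\underline{\omega}} P_{T^n_{\underline{\omega}}}$ and to apply the Lasota--Yorke inequality from the previous proposition to each composed map $T^n_{\underline{\omega}}$, treating it as a single Lasota--Yorke map. Since each $T_\omega$ is piecewise $C^2$ and $\Omega$ is finite, the composition $T^n_{\underline{\omega}} = T_{\omega_n} \circ \cdots \circ T_{\omega_1}$ is again piecewise $C^2$ on a finite (though possibly refined) partition, and the chain rule yields
\[
\lambda(T^n_{\underline{\omega}}) = \inf |(T^n_{\underline{\omega}})'| \ge \prod_{i=1}^{n} \lambda(T_{\omega_i}) > 0,
\]
so $T^n_{\underline{\omega}}$ is itself a Lasota--Yorke map.

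Next I would apply the one-map Lasota--Yorke inequality to each $T^n_{\underline{\omega}}$:
\[
{\rm Var}(P_{T^n_{\underline{\omega}}} f) \le \frac{2}{\lambda(T^n_{\underline{\omega}})} {\rm Var}(f) + A(T^n_{\underline{\omega}}) \|f\|_{L^1_m}
\le \frac{2}{\prod_i \lambda(T_{\omega_i})} {\rm Var}(f) + A(T^n_{\underline{\omega}}) \|f\|_{L^1_m}.
\]
Using the convexity of the total variation together with the decomposition of $P^n$ as a convex combination, I get
\[
{\rm Var}(P^n f) \le 2 \Biggl(\sum_{\underline{\omega} \in \Omega^n} \frac{p^n_{\underline{\omega}}}{\prod_i \lambda(T_{\omega_i})}\Biggr) {\rm Var}(f) + K_n \|f\|_{L^1_m},
\]
where $K_n := \sum_{\underline{\omega} \in \Omega^n} p^n_{\underline{\omega}} A(T^n_{\underline{\omega}}) < \infty$ because $\Omega^n$ is finite.

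The key simplification is that the first coefficient factorizes: since $p^n_{\underline{\omega}} = p_{\omega_1} \cdots p_{\omega_n}$,
\[
\sum_{\underline{\omega} \in \Omega^n} \frac{p^n_{\underline{\omega}}}{\prod_i \lambda(T_{\omega_i})} = \prod_{i=1}^{n} \sum_{\omega \in \Omega} \frac{p_\omega}{\lambda(T_\omega)} = \Lambda^n.
\]
Since $\Lambda < 1$ by the expanding-in-mean hypothesis, I can choose $N$ so large that $\rho := 2 \Lambda^N < 1$, obtaining
\[
{\rm Var}(P^N f) \le \rho\, {\rm Var}(f) + K_N \|f\|_{L^1_m},
\]
and combining with the trivial bound $\|P^N f\|_{L^1_m} \le \|f\|_{L^1_m}$ gives the full Lasota--Yorke inequality for $P^N$ on BV, as required.

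I don't expect any genuine obstacle here: the proof is essentially the observation that the expansion rate and the probability weights both multiply along compositions, so the expanding-in-mean constant $\Lambda$ appears naturally to the $n$-th power. The only point requiring mild care is ensuring that the $L^1$-coefficient $K_n$ is finite, which is automatic because $\Omega^n$ is finite for every fixed $n$; the fact that individual $A(T^n_{\underline{\omega}})$ may be large (reflecting a refined monotonicity partition) is harmless.
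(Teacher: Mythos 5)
Your proposal is correct and follows essentially the same route as the paper: decompose $P^n$ as the convex combination $\sum_{\underline{\omega}} p^n_{\underline{\omega}} P_{T^n_{\underline{\omega}}}$, apply the single-map Lasota--Yorke inequality together with subadditivity of the total variation, and bound the resulting coefficient by $\Lambda^n$ via $\lambda(T^n_{\underline{\omega}}) \ge \prod_i \lambda(T_{\omega_i})$ and the product structure of the weights, finally taking $N$ large so that $2\Lambda^N < 1$ while $K_N$ stays finite since $\Omega^N$ is finite.
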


\begin{proof}
By the classical LY inequality and subadditivity of the total variation, one has $${\rm Var}(P^nf) \le 2 \theta_n {\rm Var}(f) + A_n \|f \|_{L^1_m}$$ for all $n \ge 1$ and all $f \in {\rm BV}$, where $\theta_n = \sum_{\underline{\omega} \in \Omega^n} \frac{p_{\underline{\omega}}^n}{\lambda(T_{\underline{\omega}}^n)}$ and $A_n = \sum_{\underline{\omega} \in \Omega^n} p_{\underline{\omega}}^n A(T_{\underline{\omega}}^n)$. Since $\lambda(T_{\underline{\omega}}^n) \ge \lambda(T_{\omega_1}) \ldots \lambda(T_{\omega_n})$, one obtains $\theta_n \le \sum_{\underline{\omega} \in \Omega^n} \frac{p_{\omega_1}  \ldots p_{\omega_n}}{\lambda(T_{\omega_1}) \ldots \lambda(T_{\omega_n})} = \Lambda^n$. Hence, $2 \theta_n <1$ for $n$ large enough, while the corresponding $A_n$ is finite. This concludes the proof. \qed
\end{proof}

This implies assumptions \ref{Stab} and \ref{LY}.

\begin{remark}

\begin{enumerate}

\item Pelikan \cite{Pel} showed that the previous Lasota-Yorke inequality still holds under the weaker assumption $\sup_{x} \sum_{\omega \in \Omega} \frac{p_{\omega}}{|T_{\omega}'(x)|} < 1$.

\item The result is still valid if the set $\Omega$ is infinite, assuming an integrability condition for the distortion. See Remark 5.1 in Morita \cite{Mor1}.

\end{enumerate}

\end{remark}

From Ionescu-Tulcea and Marinescu theorem, it follows that the annealed transfer operator has the following spectral decomposition : $$P = \sum_{i} \lambda_i \Pi_i + Q,$$ where all $\lambda_i$ are eigenvalues of $P$ of modulus $1$, $\Pi_i$ are finite-rank projectors onto the associated eigenspaces, $Q$ is a bounded operator with a spectral radius strictly less than $1$. They satisfy $$\Pi_i \Pi_j = \delta_{ij} \Pi_i, \: Q \Pi_i = \Pi_i Q = 0.$$

This implies existence of an absolutely continuous stationary measure, with density belonging to ${\rm BV}$. Standard techniques show that $1$ is an eigenvalue and that the peripheral spectrum is completely cyclic. We'll give a concrete criterion ensuring that $1$ is a simple eigenvalue of $P$, and that there is no other peripheral eigenvalue, hence implying assumption 6. In this case, we will say that $(\Omega, \mathbb{P}, T)$ is mixing.

\begin{definition}
The random LY system $(\Omega, \mathbb{P}, T)$ is said to have the Random Covering (RC) property if for any non-trivial subinterval $I \subset [0,1]$, there exist $n \ge 1$ and $\underline{\omega} \in \Omega^n$ such that $T_{\underline{\omega}}^n(I) = [0,1]$.
\end{definition}

\begin{proposition} \label{rc_1D}   If $(\Omega, \mathbb{P}, T)$ is expanding in mean and has the (RC) property, then $(\Omega, \mathbb{P}, T)$ is mixing and the density of the unique a.c. stationary measure is bounded away from $0$.
\end{proposition}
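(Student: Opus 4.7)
The plan is to use the (RC) hypothesis at three levels: first to produce a uniform positive lower bound for any non-negative fixed density, then to establish simplicity of the eigenvalue $1$, and finally to rule out every other peripheral eigenvalue. The argument will combine positivity of $P$ with the Lasota-Yorke quasi-compactness from Proposition \ref{gapLY}.

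I first show the lower bound. Let $h \in {\rm BV}$ be any non-negative fixed density. By standard BV regularity the level set $\{h \ge \epsilon\}$ contains a non-trivial interval $I$ for $\epsilon > 0$ small; applying (RC) to $I$ gives $n$ and $\underline\omega \in \Omega^n$ with $T_{\underline\omega}^n(I) = [0,1]$, and inserting $h = P^n h$ while keeping only the contribution of one preimage $y \in I$ of an arbitrary $x$ yields $h(x) \ge p_{\underline\omega}^n \epsilon / \sup|(T_{\underline\omega}^n)'| > 0$. The same identity, combined with the elementary fact that $P|f| = |f|$ a.e.\ whenever $Pf = \lambda f$ with $|\lambda| = 1$ (since $P|f| \ge |Pf| = |f|$ and $P$ preserves integrals), will later apply to $|f|$ for any peripheral eigenfunction.

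For simplicity, let $f \in {\rm BV}$ be a real fixed point of $P$ (the complex case reducing via real and imaginary parts). From $P|f| = |f|$ together with $f = f^+ - f^-$ and $|f| = f^+ + f^-$ one gets $P f^\pm = f^\pm$. If both $f^+$ and $f^-$ were non-zero, the lower bound of the first step would force each to be strictly positive on all of $[0,1]$, contradicting $f^+ f^- \equiv 0$. Hence $f$ is a scalar multiple of $h$, so $\dim \ker(P - I) = 1$ and the a.c.\ stationary density is unique.

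For the absence of other peripheral eigenvalues, assume $Pf = \lambda f$ with $|\lambda| = 1$ and $f \neq 0$. The previous paragraph gives $|f| = ch$, so I write $f = h g$ with $|g| \equiv 1$. Plugging into $Pf = \lambda f$ and dividing by $h$ (bounded below by the first step), the quantities $p_\omega h(y)/(|T_\omega'(y)| h(x))$ form positive probability weights whose weighted average of $g(y)$ equals $\lambda g(x)$; the equality case in the triangle inequality therefore forces $g \circ T_\omega = \bar\lambda g$ a.e.\ for every $\omega$. Taking $\lambda^p = 1$ with $p$ minimal (possible since the peripheral spectrum is cyclic), $g^p$ is invariant under every $U_\omega$ and hence under $U$; ergodicity of the unique a.c.\ stationary measure (which follows from the simplicity of $1$ for $P$ established above) shows that $1$ is simple for $U$ as well, so $g^p$ is a.e.\ constant. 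After a rotation I may take $g^p \equiv 1$, so $g$ is BV (since $g = f/h$ with $h$ bounded below) with values in the $p$-th roots of unity; the level sets $A_k = \{g = \lambda^k\}$ are therefore finite unions of intervals and satisfy $T_\omega(A_k) \subset A_{k-1 \bmod p}$ for all $\omega$. If $p > 1$, some $A_{k_0}$ with $k_0 \ne 0$ has positive measure, hence contains a non-trivial interval $I$; applying (RC) yields $T_{\underline\omega}^n(I) = [0,1]$ while the cyclic inclusion forces $T_{\underline\omega}^n(I) \subset A_{k_0 - n \bmod p} \subsetneq [0,1]$, a contradiction. Hence $p = 1$ and $\lambda = 1$. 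The main subtlety, where the BV regularity of $g$ and the lower bound for $h$ both intervene, is the passage from the abstract cyclic structure of the peripheral spectrum to the concrete cyclic partition of $[0,1]$ by finite unions of intervals.
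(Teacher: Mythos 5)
Your proof is correct in substance, but it takes a genuinely different route from the paper's for the mixing part. The paper's proof is shorter: it invokes the standard structure of the peripheral spectrum (a finite union of finite cyclic groups) to pass to a power $P^k$ for which $1$ is the only peripheral eigenvalue, recalls that the fixed space of $P^k$ then admits a basis of non-negative eigenvectors with pairwise disjoint supports, and uses exactly your first step (the (RC) lower bound $h(x)\ge \alpha\, p_{\underline{\omega}^{\star}}^{nk}/\|(T_{\underline{\omega}^{\star}}^{nk})'\|_{\rm sup}$) to show that every such eigenvector has full support; disjointness then forces one-dimensionality, and simplicity of $1$ together with the absence of any other peripheral eigenvalue for $P$ come out in one stroke. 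You instead prove simplicity by the lattice argument ($P|f|=|f|$, hence $Pf^{\pm}=f^{\pm}$, and full support of both would contradict $f^{+}f^{-}\equiv 0$), and then exclude other peripheral eigenvalues by writing $f=gh$ with $|g|\equiv 1$, extracting the cocycle relation $g\circ T_{\omega}=\bar{\lambda}g$ from the equality case in the triangle inequality, and applying (RC) a second time to the finite cyclic partition $\{g=\lambda^{k}\}$ into unions of intervals. What your route buys is explicitness: it shows concretely that a non-trivial peripheral eigenfunction would force a finite partition of $[0,1]$ into intervals cyclically permuted by every $T_{\omega}$, which random covering visibly forbids; what the paper's route buys is brevity, since the disjoint-support basis encodes all of this at once. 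The only under-justified step in your sketch is the appeal to ergodicity of $\mu$ (simplicity of $1$ for $P$ on ${\rm BV}$ gives uniqueness among BV densities, and upgrading that to ``bounded $U$-harmonic functions are constant'' needs an extra standard regularization argument); you can bypass it entirely with tools you already have: since $g=f/h\in{\rm BV}$ (as $h$ is bounded below) and $g^{p}\circ T_{\omega}=g^{p}$ for every $\omega$, one gets $P(g^{p}h)=g^{p}Ph=g^{p}h$ with $g^{p}h\in{\rm BV}$, so the simplicity you already proved forces $g^{p}h=ch$, i.e.\ $g^{p}$ constant.
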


\begin{proof} Since the peripheral spectrum of $P$ consists of a finite union of finite cyclic groups, there exists $k \ge 1$ such that $1$ is the unique peripheral eigenvalue of $P^k$. It suffices then to show that the corresponding eigenspace is one-dimensional. Standard arguments show there exists a basis of positive eigenvectors for this subspace, with disjoint supports. Let then $h\in {\rm BV}$ non-zero satisfying $h \ge 0$ and $P^kh = h$. There exist a non-trivial interval $I \subset [0,1]$ and $\alpha > 0$ such that $h \ge \alpha \mathds{1}_I$. Choose $n \ge 1$ and $\underline{\omega}^{\star} \in \Omega^{nk}$ such that $T_{\underline{\omega}^{\star}}^{nk}(I) = [0,1]$. For all $x \in [0,1]$, we have $$h(x) = P^{nk} h(x) \ge \alpha P^{nk} \mathds{1}_I(x) = \alpha \sum_{\underline{\omega} \in \Omega^{nk}} p_{\underline{\omega}}^{nk} \sum_{T_{\underline{\omega}}^{nk} y =x} \frac{\mathds{1}_I(y)}{| (T_{\underline{\omega}}^{nk})'(y)|} \ge \alpha p_{\underline{\omega}^{\star}}^{nk} \sum_{T_{\underline{\omega}^{\star}}^{nk} y =x} \frac{\mathds{1}_I(y)}{| (T_{\underline{\omega}^{\star}}^{nk})'(y)|}.$$ This shows that $h(x) \ge \alpha \frac{p_{\underline{\omega}^{\star}}^{nk}}{\|(T_{\underline{\omega}^{\star}}^{nk})' \|_{\rm sup}} > 0$, since there is always a $y \in I$ with $T_{\underline{\omega}^{\star}}^{nk} y =x$. This implies that $h$ has full support, and concludes the proof. \qed
\end{proof}

Some statistical properties of random one-dimensional systems, using the space ${\rm BV}$, were studied in the thesis of T\"{u}mel \cite{Tum}.
\end{example}

\begin{example}[Piecewise expanding multidimensional maps] ~ \\
We describe a class of piecewise expanding multi-dimensional maps introduced by Saussol \cite{S}. Denote by $m_d$ the $d$-dimensional Lebesgue measure, by $d(.,.)$ the euclidean distance and by $\gamma_d$ the $m_d$-volume of the unit ball of $\mathbb{R}^d$. Let $M$ a compact regular subset of $\mathbb{R}^d$ and let $T : M \to M$ be a map such that there exists a finite family of disjoint open sets $U_i \subset M$ and $V_i$ with $\bar{U_i} \subset V_i$ and maps $T_i : V_i \to \mathbb{R}^d$ satisfying for some $0 < \alpha \le 1$ and some small enough $\epsilon_0 > 0$ :

\begin{enumerate}

\item $m_d(M \setminus \cup_i U_i) = 0$;

\item for all $i$, the restriction to $U_i$ of $T$ and $T_i$ coincide, and $B_{\epsilon_0}(TU_i) \subset T_i(V_i)$;

\item for all $i$, $T_i$ is a $C^1$-diffeomorphism from $V_i$ onto $T_i V_i$, and for all $x,y \in V_i$ with $d(T_i x,T_i y) \le \epsilon_0$, we have $$\left| {\rm det} DT_i(x) - {\rm det} DT_i(y) \right| \le c \left| {\rm det} DT_i(x)\right| d(T_i x, T_iy)^{\alpha},$$ for some constant $c> 0$ independant of $i$, $x$ and $y$;

\item there exists $s < 1$ such that for all $x,y \in V_i$ with $d(T_i x, T_i y) \le \epsilon_0$, we have $d(x,y) \le s d(T_i x, T_i y)$;

\item Assume that the boundaries of the $U_i$ are included in piecewise $C^1$ codimension one embedded compact submanifolds. Define  $$Y = \sup_x \sum_i \sharp \{ {\rm smooth ~ pieces ~ intersecting ~} \partial U_i {\rm ~ containing ~} x\},$$
  and $$\eta_0 = s^{\alpha} + \frac{4s}{1-s} Y \frac{\gamma_{d-1}}{\gamma_d}.$$ Then $\eta_0 < 1$.

\end{enumerate}

The above conditions can be weakened in order to allow infinitely many domains of injectivity and also the possibility of fractal boundaries. We refer the interested reader to \cite{S} for more details.

We will call such maps piecewise expanding maps in the sense of Saussol. The analysis of the transfer operators of these maps requires the introduction of the functional space called Quasi-H\"{o}lder. This space was first defined and studied by Keller \cite{Kel} for one-dimensional transformations, and then extended to multidimensional systems by Saussol \cite{S}.

We give the definition of this space. Let $f : \mathbb{R}^d \to \mathbb{C}$ be a measurable function. For a Borel subset $A \subset  \mathbb{R}^d$, define $\mbox{osc}(f, A) = \underset{x,  y \in A}{\rm ess ~ sup} |f(x) -f(y)|$. For any $\epsilon > 0$, the map $x \mapsto \mbox{osc}(f, B_{\epsilon}(x))$ is a positive lower semi-continuous function, so that the integral $\int_{\mathbb{R}^d} \mbox{osc}(f, B_{\epsilon}(x)) dx$ makes sense.
For $f \in L^1(\mathbb{R}^d)$ and $0 < \alpha \le 1$, define $$|f|_{\alpha} = \sup_{0 < \epsilon \le \epsilon_0} \frac{1}{\epsilon^{\alpha}} \int_{\mathbb{R}^d} \mbox{osc}(f, B_{\epsilon}(x)) dx.$$ For a regular compact subset $M \subset \mathbb{R}^d$, define $$V_{\alpha}(M) = \{f  \in L^1(\mathbb{R}^d) \, / \, {\rm supp}(f) \subset M, \, |f|_{\alpha} < \infty \},$$ endowed with the norm $\|f\|_{\alpha} = \|f \|_{L^1_m} + |f|_{\alpha}$, where $m$ is the Lebesgue measure normalized so that $m(M) = 1$. Note that while the norm depends on $\epsilon_0$, the space $V_{\alpha}$ does not, and two choices of $\epsilon_0$ give rise to two equivalent norms.

If $T$ is a piecewise expanding map in the sense of Saussol and $P_T$ is the transfer operator of $T$, a Lasota-Yorke type inequality holds :

\begin{proposition}(\cite[Lemma 4.1]{S})
Provided $\epsilon_0$ is small enough, there exists $\eta < 1$ and $D < \infty$ such that for any $f \in V_{\alpha}$, $$|P_T f|_{\alpha} \le \eta |f|_{\alpha} + D \|f\|_{L^1_m}.$$

\end{proposition}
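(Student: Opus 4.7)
The plan is to adapt Saussol's oscillation estimate from \cite{S}, splitting $\mbox{osc}(P_T f, B_\epsilon(x))$ branch by branch and distinguishing between balls $B_\epsilon(x)$ that lie entirely in a single image $T_i U_i$ and balls that straddle a boundary $\partial(T_i U_i)$. Write $P_T f = \sum_i g_i$ with $g_i(x) = \frac{f(\psi_i x)}{|\det DT_i(\psi_i x)|}\, \mathbf{1}_{T_i U_i}(x)$, where $\psi_i := (T_i|_{V_i})^{-1}$. Fix $\epsilon \in (0,\epsilon_0]$. By subadditivity of the oscillation,
\[
\mbox{osc}(P_T f, B_\epsilon(x)) \le \sum_i \left[ R_i(x) + 2\, \mathbf{1}_{\{B_\epsilon(x) \cap \partial T_i U_i \ne \emptyset\}}\, S_i(x) \right],
\]
where $R_i(x) = \mbox{osc}(g_i, B_\epsilon(x) \cap T_i U_i)$ is the \emph{interior} term and $S_i(x) = \mbox{ess sup}_{y \in B_\epsilon(x) \cap T_i U_i} |g_i(y)|$ is the \emph{boundary} term.

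For the interior piece, condition~(4) gives $\psi_i(B_\epsilon(x)) \subset B_{s\epsilon}(\psi_i x)$, and the distortion bound~(3) shows that $|\det DT_i|$ varies by a factor at most $1+c\epsilon^\alpha$ on this pullback. Applying the product-rule bound on oscillation and changing variable $y = \psi_i x$ in the outer integral absorbs the Jacobian, yielding
\[
\int \sum_i R_i(x)\, dx \le (1 + c\epsilon^\alpha)\, s^\alpha \epsilon^\alpha |f|_\alpha + C\epsilon^\alpha \|f\|_{L^1_m},
\]
where the $\|f\|_{L^1_m}$ term comes from the residual distortion factor (after bounding the ess-sup of $|f|$ on small balls by average plus oscillation). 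Dividing by $\epsilon^\alpha$ contributes $s^\alpha + O(\epsilon_0^\alpha)$ to the coefficient in front of $|f|_\alpha$.

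For the boundary piece, use the elementary inequality
\[
\mbox{ess sup}_{B_r(z)} |f| \le \frac{1}{m(B_r(z))} \int_{B_r(z)} |f|\, dm + \mbox{osc}(f, B_r(z)),
\]
applied at a radius $r$ slightly larger than $s\epsilon$ around $\psi_i x$, and then control $m\{x : B_\epsilon(x) \cap \partial T_i U_i \ne \emptyset\}$ by the volume of the $\epsilon$-tubular neighborhood of the smooth codimension-one pieces making up $\partial U_i$. Summing over $i$ through the counting function $Y$ of condition~(5), the tubular-to-ball volume ratio produces $\gamma_{d-1}/\gamma_d$, while passing from $s\epsilon$ to the effective pullback scale via a geometric series yields the factor $\frac{1}{1-s}$. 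The outcome is
\[
\int \sum_i 2\, \mathbf{1}_{\{B_\epsilon(x) \cap \partial T_i U_i \ne \emptyset\}}\, S_i(x)\, dx \le \left( \frac{4s}{1-s}\, Y\, \frac{\gamma_{d-1}}{\gamma_d} + C\epsilon_0^\alpha \right) \epsilon^\alpha |f|_\alpha + D\epsilon^\alpha \|f\|_{L^1_m}.
\]

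Adding both contributions, dividing by $\epsilon^\alpha$, and taking the supremum over $\epsilon \in (0,\epsilon_0]$ gives
\[
|P_T f|_\alpha \le \bigl(\eta_0 + C\epsilon_0^\alpha\bigr) |f|_\alpha + D'\|f\|_{L^1_m}.
\]
Since $\eta_0 < 1$ by hypothesis~(5), choosing $\epsilon_0$ small enough makes the coefficient strictly less than $1$, which is the desired $\eta$. The main obstacle is the boundary estimate: one must pick the right radius at which to pass from ess-sup to average plus oscillation so that the constant in~(5) appears cleanly rather than with a spurious loss, and must control tubular-neighborhood volumes uniformly in $i$ via the counting function $Y$ --- a genuinely geometric multidimensional step that has no analogue in the one-dimensional Lasota--Yorke argument.
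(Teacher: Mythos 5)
Your sketch is essentially Saussol's own proof of this inequality (the paper does not reprove it, but simply cites \cite[Lemma 4.1]{S}): the same splitting of $\mbox{osc}(P_T f, B_{\epsilon}(x))$ into an interior term handled by the contraction property and the distortion bound, and a boundary term handled by the bound of the essential supremum by average plus oscillation together with tubular neighborhoods of $\partial T_i U_i$ counted through $Y$, which is also exactly the decomposition the paper itself uses later in the proof of Proposition \ref{cont_qh}. The only imprecision is your attribution of the factor $\frac{1}{1-s}$ to a geometric series: in \cite{S} it comes from averaging $|f|$ over balls of radius $(1-s)\epsilon$ chosen so that the averaging ball stays inside the pullback of $B_{\epsilon}(x)$, but this does not change the structure of the argument or the constant $\eta_0 = s^{\alpha} + \frac{4s}{1-s} Y \frac{\gamma_{d-1}}{\gamma_d}$ that makes the conclusion hold for $\epsilon_0$ small.
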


Suppose now that $\Omega$ is a finite set, $\mathbb{P} = \{p_{\omega}\}_{\omega \in \Omega}$ a probability vector, and $\{T_{\omega}\}_{\omega \in \Omega }$ a finite collection of piecewise expanding maps on $M \subset \mathbb{R}^d$. This will be referred to as a random piecewise expanding multidimensional system. Take $\epsilon_0$ small enough so that the inequalities $|P_{T_{\omega}}|_{\alpha} \le \eta_{\omega} |f|_{\alpha} + D_{\omega} \|f\|_{L^1_m}$ for all $f \in V_{\alpha}$. Defining $\eta = \max \eta_{\omega}$ and $D = \max D_{\omega}$, so that $\eta < 1$ and $D < \infty$. Since $P = \sum_{\omega} P_{T_{\omega}}$, we immediately get $|P f|_{\alpha} \le \eta |f|_{\alpha} + \| f\|_{L^1_m}$, for all $f \in V_{\alpha}$. This shows that our abstract assumptions \ref{compact_embed} to \ref{LY} are all satisfied. To prove that $P$ is mixing, and hence check assumption 6, we can proceed as in the one-dimensional situation, introducing the same notion of random-covering.. Indeed, any positive non-zero element $h \in V_{\alpha}$ is bounded uniformly away from zero on some ball by Lemma 3.1 in \cite{S}, so we can mimic the proof of Proposition \ref{rc_1D} and get :

\begin{proposition} \label{rc_multi_d}
If $(\Omega, \mathbb{P}, T)$ is a random piecewise expanding multidimensional system which has the random covering property in the sense that for all ball $B \subset M$, there exists $n \ge 1$ and $\underline{\omega} \in \Omega^n$ such that $T_{\underline{\omega}}^n (B) = M$, then $(\Omega, \mathbb{P}, T)$ is mixing and the density of the unique a.c. stationary measure is bounded away from $0$.
\end{proposition}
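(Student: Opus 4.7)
The plan is to transcribe the one-dimensional argument of Proposition \ref{rc_1D} to the multidimensional quasi-H\"older setting, exploiting a Saussol-type lower bound in place of the interval lower bound used there. As in the one-dimensional case, the abstract spectral framework ensures that the peripheral spectrum of $P$ on $V_\alpha$ consists of a finite union of finite cyclic groups, so there exists $k \ge 1$ with $1$ as the unique peripheral eigenvalue of $P^k$. The whole content of the proposition reduces to showing that the eigenspace $E := \{h \in V_\alpha : P^k h = h\}$ is one-dimensional, since a standard argument then yields a basis of positive eigenvectors with pairwise disjoint supports, and one-dimensionality forces the unique basis element (which, after normalization, is the stationary density) to have full support and be bounded away from $0$.

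To prove $\dim E = 1$, I would pick any non-zero $h \in E$ with $h \ge 0$ and show it is strictly positive a.e. (this rules out disjoint supports for more than one basis vector). Here the only substantive new input compared to the one-dimensional proof is Lemma 3.1 of Saussol \cite{S}, quoted in the excerpt: any non-zero, non-negative $h \in V_\alpha$ is bounded below by some constant $\beta > 0$ on some ball $B \subset M$, i.e.\ $h \ge \beta \mathds{1}_B$. With such a pair $(B,\beta)$ in hand, apply the Random Covering property to $B$ to obtain an integer $N$ and a word $\underline{\omega}^\star \in \Omega^N$ with $T_{\underline{\omega}^\star}^N(B) = M$. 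By replacing $\underline{\omega}^\star$ with a suitable extension (concatenation with any word of length a multiple of $k$), one may assume $N = nk$ for some $n \ge 1$.

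The final computation is exactly the one in Proposition \ref{rc_1D}, with the $d$-dimensional transfer operator formula. For every $x \in M$,
\[
h(x) = P^{nk} h(x) \ge \beta\, P^{nk} \mathds{1}_B(x) \ge \beta\, p_{\underline{\omega}^\star}^{nk} \sum_{T_{\underline{\omega}^\star}^{nk} y = x} \frac{\mathds{1}_B(y)}{|\det DT_{\underline{\omega}^\star}^{nk}(y)|},
\]
and the surjectivity $T_{\underline{\omega}^\star}^{nk}(B) = M$ guarantees at least one preimage $y \in B$, so
\[
h(x) \ge \frac{\beta\, p_{\underline{\omega}^\star}^{nk}}{\|\det DT_{\underline{\omega}^\star}^{nk}\|_{\rm sup}} > 0.
\]
Hence $h > 0$ uniformly on $M$, which both forces $\dim E = 1$ (completing the proof of mixing via assumption 6) and, applied to the stationary density, yields the claimed lower bound.

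The main conceptual obstacle is the substitute for the trivial one-dimensional fact that a BV function with positive $L^1$ norm is $\ge \alpha \mathds{1}_I$ on some subinterval; this is handled by the invocation of \cite[Lemma 3.1]{S}, which is specifically designed to provide a ball of positivity for non-negative elements of $V_\alpha$. Everything else is routine: the piecewise smooth change-of-variables formula for $P_{T_{\underline{\omega}}^{nk}}$ is well-defined because the composed map is still piecewise $C^1$ with invertible Jacobian on each domain of injectivity, and $\|\det DT_{\underline{\omega}^\star}^{nk}\|_{\rm sup}$ is finite since $M$ is compact and each $T_\omega$ is piecewise $C^1$ with finitely many branches.
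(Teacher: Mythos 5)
Your proof is correct and is essentially the paper's own argument: the paper establishes this proposition precisely by invoking Lemma 3.1 of \cite{S} to bound a non-negative, non-zero element of $V_{\alpha}$ below on a ball and then mimicking the proof of Proposition \ref{rc_1D}, which is exactly what you carry out. The only blemish is your parenthetical on how to adjust the covering word to length $nk$: appending \emph{any} word need not preserve $T^{N}_{\underline{\omega}^{\star}}(B)=M$ (the appended maps need not be onto), and the appended length must be congruent to $-N$ modulo $k$ rather than a multiple of $k$; the standard fix is to repeat the last letter of the covering word, which is necessarily surjective onto $M$, a suitable number of times --- a point the paper's one-dimensional proof glosses over as well.
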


There 	are  alternative functional spaces to study multidimensional expanding maps. One of them is the space of functions of bounded variations in higher dimension. Applications of this space to dynamical systems have been widely studied, see \cite{GB, Liv13} among many others. We also mention the thesis of Hsieh \cite{Hsieh} who investigates the application of this space to random multidimensional maps, using the same setting as us.  Notice that maps studied there are the so-called Jab\l o\'nski maps, for which the dynamical partition is made of rectangles. This kind of maps will appear later in this paper, when we will derive a quenched CLT. Nevertheless, the space BV in higher dimensions presents some drawbacks : it is not included in $L^{\infty}$ and there exist some positive functions which are not bounded below on any ball, making the application of random covering difficult, in contrast to the the Quasi-H\"{o}lder space. Apart multidimensional BV, another possibility is to use fractional Sobolev spaces, as done in a deterministic setting by Thomine \cite{Tho}.
\end{example}

\begin{example}[Random expanding piecewise linear maps] ~ \\
Building on a work by Tsujii \cite{Tsu}, we considerer random compositions of piecewise linear maps. First recall a definition :
\begin{definition}

Let $U$ be a bounded polyhedron in $\mathbb{R}^d$ with non-empty interior. An expanding piecewise linear map on $U$ is a combination $(\mathcal{T}, \mathcal{U})$ of a map $\mathcal{T} : U \to U$ and a family $\mathcal{U} = \{U_k\}_{k=1}^l$ of polyhedra $U_k \subset U$, $k = 1, \ldots, l$, satisfying the conditions

\begin{enumerate}

\item the interiors of polyhedra $U_k$ are mutually disjoint,

\item $\cup_{k=1}^l U_k = U$,

\item the restriction of the map $\mathcal{T}$ to the interior of each $U_k$ is an affine map and

\item there exists a constant $\rho > 1$ such that $\|D\mathcal{T}_x(v)\| \ge \rho \|v\|$ for all $x \in \cup_{k=1}^l {\rm int}(U_k)$, and all $v \in \mathbb{R}^d$.

\end{enumerate}

\end{definition}

We will drop $\mathcal{U}$, writing merely $\mathcal{T}$, when the partition $\mathcal{U}$ is understood. A basic consequence of Tsujii \cite{Tsu}, using the Quasi-H\"{o}lder space, is the following :

\begin{proposition} For any expanding piecewise linear map $\mathcal{T}$ on $U$, there exists constant $\epsilon_0> 0$, $\theta< 1$, $C, K > 0$ such that, for any $n\ge 0$ and $f \in V_1$ : $$|P_{\mathcal{T}}^n f |_1 \le C \theta^n |f|_1 + K\|f \|_{L^1_m},$$ where $P_{\mathcal{T}}$ is the transfer operator of $\mathcal{T}$.
\end{proposition}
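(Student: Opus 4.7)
The plan is to establish this Lasota-Yorke type inequality by deriving a one-step oscillation estimate for $P_{\mathcal{T}}$ on $V_1$, then iterating using the contraction property of the transfer operator on $L^1_m$. The crucial point is that since $\mathcal{T}$ is affine on each polyhedron $U_k$, the Jacobian $|\det D\mathcal{T}|$ is locally constant, so the distortion term that usually complicates Saussol's argument vanishes, allowing one to work with exponent $\alpha=1$ (Lipschitz scaling).

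First I would write
\[
P_{\mathcal{T}} f(x) = \sum_{k=1}^{l} \frac{1}{|\det D\mathcal{T}_k|}\, f\bigl(\mathcal{T}_k^{-1} x\bigr)\, \mathds{1}_{\mathcal{T}(U_k)}(x),
\]
and estimate $\mbox{osc}(P_{\mathcal{T}} f, B_\epsilon(x))$ as a sum over the inverse branches. The oscillation splits into an interior contribution (where $x$ and $y$ lie in the same image $\mathcal{T}(U_k)$) and a boundary contribution (where the branches entering the sum differ on $B_\epsilon(x)$). For the interior term, the expansion hypothesis $\|D\mathcal{T}_x(v)\| \ge \rho\|v\|$ forces the preimage of $B_\epsilon(x)$ under $\mathcal{T}_k^{-1}$ to lie in a ball of radius $\epsilon/\rho$; integrating and changing variables via the constancy of $|\det D\mathcal{T}_k|$ produces a contribution of at most $\rho^{-1}\epsilon \,|f|_1$ to $\int \mbox{osc}(P_{\mathcal{T}} f, B_\epsilon(x))\,dx$. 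For the boundary term, one controls the $\epsilon$-neighborhood of $\partial U_k$ by a Fubini-type argument to bound it by $C \epsilon (|f|_1 + \|f\|_{L^1_m})$ for some geometric constant $C$ depending on the polyhedral structure. Dividing by $\epsilon$ and taking the supremum over $0 < \epsilon \le \epsilon_0$ yields, for $\epsilon_0$ small enough,
\[
|P_{\mathcal{T}} f|_1 \le \eta_1 |f|_1 + D_1 \|f\|_{L^1_m}
\]
with some $\eta_1$ and $D_1 < \infty$.

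The main obstacle is that $\eta_1$ as produced above need not be less than $1$ in one step, because the boundary contribution can dominate the $\rho^{-1}$ gain if $\mathcal{T}$ has many pieces. The standard way out is to apply the same estimate to a sufficiently high iterate $\mathcal{T}^{n_0}$: since $\mathcal{T}^{n_0}$ is still expanding piecewise linear with expansion constant $\rho^{n_0}$, the interior contraction improves geometrically while one can check that the combinatorial/boundary constant grows only subexponentially, so for $n_0$ large enough we obtain $\eta := \eta_1(\mathcal{T}^{n_0}) < 1$ and some finite $D$. This gives
\[
|P_{\mathcal{T}}^{n_0} f|_1 \le \eta |f|_1 + D \|f\|_{L^1_m}.
\]

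Finally, iterate. Using $\|P_{\mathcal{T}} g\|_{L^1_m} \le \|g\|_{L^1_m}$, induction on $j$ yields $|P_{\mathcal{T}}^{jn_0} f|_1 \le \eta^j |f|_1 + \frac{D}{1-\eta}\|f\|_{L^1_m}$. For an arbitrary $n \ge 0$, write $n = jn_0 + r$ with $0 \le r < n_0$ and apply the continuity of $P_{\mathcal{T}}^r$ on $V_1$ (which follows from $r$ applications of the one-step inequality): there is $M_r < \infty$ with $|P_{\mathcal{T}}^r g|_1 \le M_r(|g|_1 + \|g\|_{L^1_m})$. Combining these with $\theta := \eta^{1/n_0} < 1$, one obtains constants $C, K > 0$ (independent of $n$) such that
\[
|P_{\mathcal{T}}^n f|_1 \le C \theta^n |f|_1 + K \|f\|_{L^1_m},
\]
which is the claimed inequality. \qed
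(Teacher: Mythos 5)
Your one-step oscillation estimate is fine in spirit (it is essentially Saussol's Lemma 4.1, simplified because the Jacobian is locally constant), but the proof breaks at the step you pass over in one sentence: the assertion that for the iterate $\mathcal{T}^{n_0}$ ``the combinatorial/boundary constant grows only subexponentially, so for $n_0$ large enough $\eta_1(\mathcal{T}^{n_0})<1$.'' The coefficient in front of $|f|_1$ coming from the boundary term is not governed by the number of pieces alone but by the local complexity of the partition of $\mathcal{T}^{n_0}$ (how many branch boundaries can pass through a single small ball, the quantity $Y$ in the condition $\eta_0 = s^{\alpha}+\frac{4s}{1-s}Y\frac{\gamma_{d-1}}{\gamma_d}<1$ of the multidimensional setting recalled in Section 2). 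Under iteration this complexity can a priori grow exponentially, at a rate competing with the gain $\rho^{-n_0}$, and there is no general sub-multiplicativity that lets you deduce the inequality for $\mathcal{T}^{n_0}$ from the one-step estimate. This is not a technicality: for piecewise expanding (nonlinear) maps in dimension $\ge 2$ the analogous statement is simply false --- Tsujii and Buzzi have produced piecewise real-analytic expanding maps of the plane with no absolutely continuous invariant measure, precisely because boundary complexity beats expansion. Hence any correct proof must use piecewise linearity in an essential way to control the accumulation of the polyhedral boundaries of the partition of $\mathcal{T}^n$, not merely to kill the distortion term as you do.

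That control is exactly the content of Tsujii's theorem, and indeed the paper does not prove the proposition at all: it states it as a consequence of Tsujii \cite{Tsu}, whose argument for expanding piecewise linear maps is a genuinely nontrivial geometric analysis of how the faces of the iterated partition can concentrate. So as written your proposal has a real gap at its central step; to close it you would either have to reproduce (or cite) Tsujii's complexity estimate for iterates of expanding piecewise linear maps, or impose an extra hypothesis of the type $\eta_0<1$ as in Saussol's setting, which the proposition deliberately avoids.
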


Let $T = \{(\mathcal{T}_{\omega}, \mathcal{U}_{\omega})\}_{\omega \in \Omega}$ be a finite collection of expanding piecewise linear map on $U$, and $ \mathbb{P} = \{p_{\omega}\}_{\omega \in \Omega}$ a probability vector.

Choosing $\epsilon_0> 0$, $\theta < 1$, $C, D < \infty$ adequately, we get $|P_{\mathcal{T}}^n f |_1 \le C \theta^n |f|_1 + K\|f \|_{L^1_m}$ for all $f \in V_1$, where $P$ is the annealed transfer operator. If we assume furthermore that the system has the random covering property, then the Proposition \ref{rc_multi_d} also holds true.

\end{example}

\section{Spectral results} \label{spectral}

We assume here that there exists a Banach space $\mathcal{B}_0 \subset L^1(m)$, with norm $\| . \|_0$, and a constant $C >0$ such that $\|f g \| \le C \|f \|_0 \|g \|$ for all $f \in \mathcal{B}_0$ and $g \in \mathcal{B}$. This assumption is clearly satisfied with $\mathcal{B} = \mathcal{B}_0$ if $\mathcal{B}$ is a Banach algebra, as it is the case for the space of functions of bounded variation in one dimension, or the Quasi-H\"{o}lder space. If $\mathcal{B}$ is the space of functions of bounded variation in $\mathbb{R}^d$, then we can take $\mathcal{B}_0 = {\rm Lip}$, see lemma 6.4 in \cite{Tho}. Elements of $\mathcal{B}_0$ will play the role of observables in the following.

The spectral decomposition of $P$ yields $P = \Pi + Q$ where $\Pi$ is the projection given by $\Pi f  = \left( \int_X f \, dm \right) h$ and $Q$ has spectral radius on $\mathcal{B}$ strictly less than $1$ and satisfies $\Pi Q = Q \Pi = 0$. It follows that $P^n = \Pi + Q^n$, where $\|Q^n \| \le C \lambda^n$, for some $C \ge 0$ and $\lambda < 1$. This implies exponential decay of correlations :

\begin{proposition} \label{decay}  We have :

\begin{enumerate}

\item  For all $f \in \mathcal{B}_0$ and $g \in L^{\infty}(m)$, $$\left| \int_X f \, U^n g \, d\mu - \int_X f \, d\mu \int_X g \, d \mu \right| \le C \lambda^n \|f \|_0 \|g \|_{L^{\infty}_m}, $$

\item If $\mathcal{B}$ is continuously embedded in $L^{\infty}(m)$, then for all $f \in \mathcal{B}_0$ and $g \in L^1(m)$, $$\left| \int_X f \, U^n g \, d\mu - \int_X f \, d\mu \int_X g \, d \mu \right| \le C \lambda^n \|f \|_0 \|g \|_{L^1_m},$$

\item If $\mathcal{B}$ is continuously embedded in $L^{\infty}(m)$ and if the density of $\mu$ is bounded uniformly away from 0, then for all $f \in \mathcal{B}_0$ and $g \in L^1(\mu)$
 $$\left| \int_X f \, U^n g \, d\mu - \int_X f \, d\mu \int_X g \, d \mu \right| \le C \lambda^n \|f \|_0 \|g \|_{L^1_\mu}.$$
\end{enumerate}

\end{proposition}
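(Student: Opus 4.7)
The plan is to reduce all three assertions to the spectral gap for $P$ on $\mathcal{B}$ via the duality $\int_X P^n f \cdot g\, dm = \int_X f \cdot U^n g\, dm$, applied with $fh$ rather than $f$ so that the stationary density is absorbed into the argument of $P^n$. This is a standard trick: one turns a correlation computed against $\mu$ into a Perron--Frobenius iterate that can be split by the spectral decomposition $P^n = \Pi + Q^n$.

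First I would observe that the multiplier hypothesis together with $h\in \mathcal{B}$ gives $fh\in \mathcal{B}$ with $\|fh\|\le C\|f\|_0\|h\|$. Since $\int_X fh\,dm = \int_X f\,d\mu$, the decomposition yields
$$ P^n(fh) = \Pi(fh) + Q^n(fh) = \left(\int_X f\,d\mu\right)h + Q^n(fh).$$
Multiplying by $g$, integrating against $m$, and using duality,
$$ \int_X f\cdot U^n g\,d\mu = \int_X fh\cdot U^n g\,dm = \int_X P^n(fh)\cdot g\,dm = \int_X f\,d\mu\int_X g\,d\mu + \int_X Q^n(fh)\cdot g\,dm.$$
Hence the three statements reduce to estimating the error term $R_n(f,g):=\int_X Q^n(fh)\cdot g\,dm$.

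For (1) I would pair $(L^1,L^\infty)$ and use the continuous embedding $\mathcal{B}\hookrightarrow L^1(m)$ from assumption (1) together with $\|Q^n\|\le C\lambda^n$:
$$|R_n(f,g)| \le \|Q^n(fh)\|_{L^1_m}\,\|g\|_{L^\infty_m} \le C\lambda^n\|fh\|\,\|g\|_{L^\infty_m} \le C\lambda^n\|h\|\,\|f\|_0\,\|g\|_{L^\infty_m}.$$
For (2), the additional embedding $\mathcal{B}\hookrightarrow L^\infty(m)$ lets me dualize the opposite way, so that $|R_n(f,g)|\le \|Q^n(fh)\|_{L^\infty_m}\,\|g\|_{L^1_m}\le C\lambda^n\|f\|_0\,\|g\|_{L^1_m}$. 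For (3), the assumption $h\ge c>0$ implies $\|g\|_{L^1_m}\le c^{-1}\|g\|_{L^1_\mu}$ for every $g$, so (3) follows from the estimate of (2) by a direct change of reference measure.

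There is no serious obstacle here; the only subtle point is checking that $f\in \mathcal{B}_0$ can be multiplied by the density $h\in\mathcal{B}$ without leaving $\mathcal{B}$, which is precisely why the auxiliary Banach space $\mathcal{B}_0$ with the bound $\|fg\|\le C\|f\|_0\|g\|$ was introduced at the opening of the section.
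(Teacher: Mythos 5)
Your argument is correct and is exactly the classical proof the paper has in mind: it only cites the deterministic analogue (appendix C.4 of \cite{AFLV}), and the standard route is precisely your reduction, absorbing the density $h$ via the multiplier bound $\|fh\|\le C\|f\|_0\|h\|$, applying $P^n=\Pi+Q^n$ to $fh$, and pairing the remainder $Q^n(fh)$ in $(L^1_m,L^\infty_m)$ or $(L^\infty_m,L^1_m)$ as appropriate, with item (3) following from (2) via $\|g\|_{L^1_m}\le c^{-1}\|g\|_{L^1_\mu}$ when $h\ge c>0$. Nothing further is needed.
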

The proof is classical, see appendix C.4 in \cite{AFLV} for the deterministic analogue.

We will now investigate limit theorems, namely a Central Limit Theorem (CLT) and a Large Deviation Principle (LDP), following Nagaev's perturbative approach. We refer to \cite{HH} for a full account of the theory. Let $\varphi \in \mathcal{B}_0$ be a bounded real observable with $\int_X \varphi \, d\mu = 0$. Define $X_k$ on $\tilde{\Omega} \times X$ by $X_k(\underline{\omega}, x) = \varphi(T_{\omega_k} \ldots T_{\omega_1} x)$ and $S_n = \sum_{k=0}^{n-1} X_k$. The first step is to prove the existence of the asymptotic variance.

\begin{proposition}  \label{green_kubo}

The limit $\sigma^2 = \lim_{n \to \infty} \frac{1}{n} \mathbb{E}_{\tilde{\mathbb{P}} \otimes \mu} (S_n^2)$ exists, and is equal to $$\sigma^2 = \int_X \varphi^2 \, d\mu + 2 \sum_{n=1}^{+\infty} \int_X \varphi \, U^n \varphi \, d \mu.$$

\end{proposition}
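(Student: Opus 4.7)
The plan is to recognize that the sequence $(X_k)$ is stationary under $\tilde{\mathbb{P}} \otimes \mu$, expand the second moment of $S_n$ using this stationarity, identify the resulting correlations with integrals involving the averaged Koopman operator $U$, and then invoke the exponential decay of correlations from Proposition \ref{decay} to control the tail of the resulting series.

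First, I would note that $X_k = \varphi \circ \pi_X \circ F^k$, where $F(\underline{\omega},x) = (\theta \underline{\omega}, T_{\underline{\omega}} x)$ is the skew-product and $\pi_X$ is projection onto $X$. Since $\mu$ is stationary, $\tilde{\mathbb{P}} \otimes \mu$ is $F$-invariant, so $(X_k)$ is a stationary process on $(\tilde{\Omega} \times X, \tilde{\mathbb{P}} \otimes \mu)$. Expanding the square yields
\begin{equation*}
\mathbb{E}_{\tilde{\mathbb{P}} \otimes \mu}(S_n^2) \;=\; n \int_X \varphi^2 \, d\mu \; + \; 2 \sum_{l=1}^{n-1} (n-l)\, \mathbb{E}_{\tilde{\mathbb{P}} \otimes \mu}(X_0 X_l).
\end{equation*}

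Second, I would compute the covariance $\mathbb{E}(X_0 X_l)$ by Fubini, using the identity $U^l g(x) = \int_{\tilde{\Omega}} g(T_{\omega_l}\cdots T_{\omega_1} x) \, d\tilde{\mathbb{P}}(\underline{\omega})$ recorded in the abstract framework. Since $\mu$ is a probability measure on $X$ alone, integrating first over $\underline{\omega}$ gives
\begin{equation*}
\mathbb{E}_{\tilde{\mathbb{P}} \otimes \mu}(X_0 X_l) \;=\; \int_X \varphi(x)\, U^l \varphi(x)\, d\mu(x) \;=:\; a_l.
\end{equation*}
Because $\varphi \in \mathcal{B}_0$ is bounded and $\int \varphi \, d\mu = 0$, the first item of Proposition \ref{decay} applied to $f = \varphi$ and $g = \varphi$ gives $|a_l| \le C \lambda^l \|\varphi\|_0 \|\varphi\|_{L^\infty_m}$ with $\lambda < 1$, so $\sum_{l \ge 1} |a_l| < \infty$.

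Finally, dividing by $n$ and writing $\frac{1}{n}\mathbb{E}(S_n^2) = \int \varphi^2 d\mu + 2 \sum_{l=1}^{n-1}(1 - l/n)\, a_l$, the absolute summability lets us pass to the limit by dominated convergence, giving
\begin{equation*}
\sigma^2 \;=\; \int_X \varphi^2 \, d\mu + 2 \sum_{l=1}^{\infty} \int_X \varphi\, U^l \varphi \, d\mu.
\end{equation*}
There is no serious obstacle here; the only point requiring a little care is to apply Proposition \ref{decay} with observables of the right regularity class (hence the hypothesis $\varphi \in \mathcal{B}_0$ rather than merely $\varphi \in \mathcal{B}$), which is what ensures the geometric rate needed for summability.
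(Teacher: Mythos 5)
Your proposal is correct and follows essentially the same route as the paper: expand $\mathbb{E}_{\tilde{\mathbb{P}} \otimes \mu}(S_n^2)$, identify the cross terms with $\int_X \varphi \, U^{|k-l|}\varphi \, d\mu$, and use the exponential decay from Proposition \ref{decay} to pass to the limit in the Ces\`aro-type sum. The only (harmless) difference is in how the covariance identity is obtained: you invoke $F$-invariance of $\tilde{\mathbb{P}} \otimes \mu$ to reduce to $\mathbb{E}(X_0 X_l)$ and then apply Fubini together with $U^l g(x)=\int_{\tilde{\Omega}} g(T_{\omega_l}\cdots T_{\omega_1}x)\,d\tilde{\mathbb{P}}(\underline{\omega})$, whereas the paper's lemma computes $\mathbb{E}(X_k X_l)$ for general $k \ge l$ directly through the duality $\int_X h \, U^l(\cdot)\, dm = \int_X P^l h \,(\cdot)\, dm$ and $P^l h = h$, which is the same stationarity expressed in operator form.
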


\begin{proof}
We expand the term $S_n^2$ and get $\mathbb{E}_{\tilde{\mathbb{P}} \otimes \mu} (S_n^2) = \sum_{k,l = 0}^{n-1} \mathbb{E}_{\tilde{\mathbb{P}} \otimes \mu} (X_k X_l)$.

\begin{lemma}
For all integers $k$ and $l$, one has $\mathbb{E}_{\tilde{\mathbb{P}} \otimes \mu}(X_k X_l) = \int_X \varphi \, U^{|k-l|} \varphi \, d\mu$.
\end{lemma}

\begin{proof}[Proof of the lemma]
By symmetry, we can assume $k \ge l$. We have $$\begin{aligned} &\mathbb{E}_{\tilde{\mathbb{P}} \otimes \mu}(X_k X_l) = \int_X h(x) \int _{\tilde{\Omega}} X_k(\underline{\omega}, x) X_l(\underline{\omega}, x) \, d\tilde{\mathbb{P}}(\underline{\omega}) dm(x)&  \\ &= \int_X h(x) \int_{\tilde{\Omega}} (\varphi \circ T_{\omega_k} \circ \ldots \circ T_{\omega_{l+1}}) (T_{\omega_l}  \ldots T_{\omega_1} x) \varphi(T_{\omega_l} \ldots T_{\omega_1} x) \, d\tilde{\mathbb{P}}(\underline{\omega}) dm(x)&  \\ & = \int_X h(x) \int_{\tilde{\Omega}} U^l(\varphi (\varphi \circ T_{\omega_k} \circ \ldots \circ T_{\omega_{l+1}}))(x) \, d\tilde{\mathbb{P}}(\omega_{l+1}, \ldots) dm(x)& \\ &= \int_{\tilde{\Omega}}  \int_X P^l h(x) \varphi(x) \varphi(T_{\omega_k} \ldots  T_{\omega_{l+1}}x) \, dm(x) d\tilde{\mathbb{P}}(\omega_{l+1}, \ldots) = \int_X \varphi(x) U^{k-l} \varphi(x) \, d \mu (x)& \end{aligned}$$ \qed

\end{proof}

\noindent Applying this lemma, we get $$\mathbb{E}_{\tilde{\mathbb{P}} \otimes \mu} (S_n^2) = \sum_{k,l=0}^{n-1} \int_X \varphi \, U^{|k-l|} \varphi \, d \mu = n \int_X \varphi^2 \, d\mu + 2 \sum_{k=1}^n (n-k) \int_X \varphi \, U^k \varphi \, d\mu. $$

\noindent Since $\int_X \varphi \, U^k \varphi \, d\mu$ decays exponentially fast, we see immediately that $\frac{1}{n} \mathbb{E}_{\tilde{\mathbb{P}} \otimes \mu}(S_n^2) $ goes to the desired quantity. \qed

\end{proof}

Let us mention that we have the following criteria to determine whether the asymptotic variance is $0$. The proof follows along the same lines as Lemma 4.1 in \cite{ALS}.

\begin{proposition}
The asymptotic variance satisfies $\sigma^2 = 0$ if and only if there exists $\psi \in L^2(\mu)$ such that, for $\mathbb{P}$-a.e. $\omega$, $\varphi = \psi - \psi \circ T_{\omega}$ $\mu$-a.e.

\end{proposition}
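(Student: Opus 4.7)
The plan. The direction ``coboundary $\Rightarrow \sigma^2 = 0$'' is immediate by telescoping. If $\varphi = \psi - \psi \circ T_\omega$ $\mu$-a.e.\ for $\mathbb{P}$-a.e.\ $\omega$, Fubini produces a full-measure set of $\underline{\omega} \in \tilde{\Omega}$ on which the identity holds at every coordinate $\omega_k$ simultaneously, so that
$$ S_n(\underline{\omega}, x) = \sum_{k=0}^{n-1} \varphi(T_{\omega_k}\cdots T_{\omega_1} x) = \psi(x) - \psi(T_{\omega_n}\cdots T_{\omega_1} x). $$
Invariance of $\tilde{\mathbb{P}} \otimes \mu$ under the skew-product $F$ gives $\|\psi \circ F^n\|_{L^2(\tilde{\mathbb{P}} \otimes \mu)} = \|\psi\|_{L^2(\mu)}$, so $\mathbb{E}_{\tilde{\mathbb{P}} \otimes \mu}[S_n^2] \le 4 \|\psi\|_{L^2(\mu)}^2$ and dividing by $n$ yields $\sigma^2 = 0$.

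For the converse, assume $\sigma^2 = 0$. I would construct $\psi$ as the solution of the Poisson equation via the series
$$ \psi := \sum_{n=0}^{\infty} U^n \varphi \in L^2(\mu), $$
the $L^2(\mu)$-convergence being the main technical point. Under the spectral gap of $P$ on $\mathcal{B}$ and with $\varphi \in \mathcal{B}_0$ of zero $\mu$-mean, the decay-of-correlations estimate in Proposition \ref{decay} (in the regime where $\mathcal{B} \hookrightarrow L^\infty$ and $h$ is bounded away from zero, as holds under the random covering property) can be turned, via a duality argument on $P$, into geometric decay of $\|U^n \varphi\|_{L^2(\mu)}$, giving absolute convergence of the series. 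The Poisson equation $\varphi = \psi - U\psi$ is then immediate.

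Next I would exploit the martingale structure of the Markov chain. Setting $X_k = T_{\omega_k} \cdots T_{\omega_1} x$ and $\mathcal{F}_k = \sigma(x, \omega_1, \ldots, \omega_k)$, the variables $M_k := \psi(X_k) - U\psi(X_{k-1})$ form a stationary martingale difference sequence under $\tilde{\mathbb{P}} \otimes \mu$. Decomposing $\varphi(X_k) = M_k + U\psi(X_{k-1}) - U\psi(X_k)$ for $k \ge 1$ and telescoping,
$$ S_n = \psi(X_0) - U\psi(X_{n-1}) + \sum_{k=1}^{n-1} M_k. $$
Orthogonality and stationarity of the martingale increments, combined with a Cauchy--Schwarz bound on the cross terms exploiting the decay of $\|U^n \psi\|_{L^2(\mu)}$, then yield
$$ \sigma^2 = \lim_{n \to \infty} \frac{1}{n} \mathbb{E}_{\tilde{\mathbb{P}} \otimes \mu}[S_n^2] = \mathbb{E}[M_1^2] = \|\psi\|_{L^2(\mu)}^2 - \|U\psi\|_{L^2(\mu)}^2. $$

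The conclusion is then a Jensen equality argument. Pointwise $(U\psi)^2 \le U(\psi^2)$, and since $\mu$ is $U$-stationary, integration against $\mu$ gives $\|U\psi\|_{L^2(\mu)}^2 \le \|\psi\|_{L^2(\mu)}^2$, with equality if and only if $U(\psi^2) = (U\psi)^2$ $\mu$-a.e. The hypothesis $\sigma^2 = 0$ forces this equality, i.e.\ the $\omega$-conditional variance of $\psi(T_\omega x)$ vanishes for $\mu$-a.e.\ $x$, so that $\psi(T_\omega x) = U\psi(x)$ for $\mathbb{P} \otimes \mu$-a.e.\ $(\omega, x)$. Combined with $\varphi = \psi - U\psi$, this gives $\varphi = \psi - \psi \circ T_\omega$ $\mu$-a.e. for $\mathbb{P}$-a.e. $\omega$, as required. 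The hard part is the $L^2(\mu)$-convergence of $\sum U^n \varphi$: iterates $U^n \varphi$ need not lie in $\mathcal{B}$ with a uniform norm, so the spectral gap of $P$ cannot be applied directly. An alternative route that avoids this issue is to invoke Browder's theorem on the skew product $F$: under $\sigma^2 = 0$, the formula for $\mathbb{E}[S_n^2]$ shows $\sup_n \|S_n\|_{L^2(\tilde{\mathbb{P}} \otimes \mu)} < \infty$, producing an $L^2(\tilde{\mathbb{P}} \otimes \mu)$-coboundary $\hat{\psi}$ for $F$, which can then be averaged over $\underline{\omega}$ to recover $\psi$.
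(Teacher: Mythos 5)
The telescoping direction and your closing Jensen-equality argument are fine, but the centerpiece of your converse, namely $\psi:=\sum_{n\ge 0}U^n\varphi\in L^2(\mu)$, rests on a claim that is not available. Proposition \ref{decay} controls the correlations $\int f\,U^n g\,d\mu$ for $f\in\mathcal{B}_0$; it cannot be turned ``by duality'' into geometric decay of $\|U^n\varphi\|_{L^2(\mu)}$, because the duality would require applying the spectral gap of $P$ to $U^n\varphi$ (or to $(U^n\varphi)h$), which need not lie in $\mathcal{B}$ with controlled norm --- as you concede at the end. Worse, the claimed decay is simply false within the paper's framework: nothing excludes $\Omega$ being a singleton, in which case $U$ is the Koopman isometry of a single measure-preserving map, $\|U^n\varphi\|_{L^2(\mu)}=\|\varphi\|_{L^2(\mu)}$ for all $n$, and your series diverges; forward iterates $U^n\varphi$ need not decay even for genuinely random systems. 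The operator whose iterates do decay is the \emph{dual} of $U$, i.e.\ the transfer operator with respect to $\mu$, so the Poisson equation must be produced by another route.

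Your fallback via Browder's (or Leonov's) theorem on the skew product is the correct repair, and it is essentially the route of Lemma 4.1 in \cite{ALS} that the paper invokes: $\sigma^2=0$ together with the exponential decay of $\int\varphi\,U^k\varphi\,d\mu$ gives $\sup_n\|S_n\|_{L^2(\tilde{\mathbb{P}}\otimes\mu)}<\infty$, hence $\varphi\circ p=\hat{\psi}-\hat{\psi}\circ F$ with $\hat{\psi}\in L^2(\tilde{\mathbb{P}}\otimes\mu)$. But averaging over $\underline{\omega}$ does not directly ``recover $\psi$'' in the sense of the statement: with $\psi(x):=\int\hat{\psi}(\underline{\omega},x)\,d\tilde{\mathbb{P}}(\underline{\omega})$, the product structure of $\tilde{\mathbb{P}}$ gives $\int\hat{\psi}(\theta\underline{\omega},T_{\omega_1}x)\,d\tilde{\mathbb{P}}(\underline{\omega})=U\psi(x)$, so averaging yields only the Poisson equation $\varphi=\psi-U\psi$. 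This is, however, exactly what your last step needs: the identity $\sigma^2=\|\psi\|_{L^2(\mu)}^2-\|U\psi\|_{L^2(\mu)}^2$ holds for \emph{any} $L^2(\mu)$ solution of the Poisson equation (the cross terms are $O(\sqrt{n})$ by Cauchy--Schwarz alone; no decay of $\|U^n\psi\|_{L^2(\mu)}$ is needed, nor available), and then $\sigma^2=0$ forces $U(\psi^2)=(U\psi)^2$ $\mu$-a.e., whence $\psi\circ T_{\omega}=U\psi$ and $\varphi=\psi-\psi\circ T_{\omega}$ for $\mathbb{P}\otimes\mu$-a.e.\ $(\omega,x)$. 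So your argument becomes complete once the series construction is dropped and the Browder step is made the actual source of $\psi$. Note also that the reverse-martingale machinery of Section \ref{clt_martingale} gives the converse at once: with $w=\sum_{n\ge 1}P^n\varphi$ ($P$ taken with respect to $\mu$) and $\chi=\varphi_{\pi}+w_{\pi}-w_{\pi}\circ\tau$, one has $\sigma^2=\mathbb{E}_{\mu_c}(\chi^2)$, so $\sigma^2=0$ forces $\chi=0$ $\mu_c$-a.e., i.e.\ $\varphi=\psi-\psi\circ T_{\omega}$ with $\psi=-w\in L^{\infty}(\mu)$.
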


Denote by $\mathcal{M}_{\mathcal{B}}$ the set of all probability measures on $(X,\mathcal{A})$ which are absolutely continuous w.r.t. $m$, and whose density lies in $\mathcal{B}$. For a measure $\nu \in \mathcal{M}_{\mathcal{B}}$, we will denote by $\| \nu \|$ the $\mathcal{B}$-norm of the density $\frac{d \nu}{dm}$. Now, we are able to state the main theorems of this section :

\begin{theorem}[Central Limit Theorem] \label{CLT_spec}
For every probability measure $\nu \in \mathcal{M}_{\mathcal{B}}$, the process $(\frac{S_n}{\sqrt{n}})_n$ converges in law to $\mathcal{N}(0,\sigma^2)$ under the probability $\tilde{\mathbb{P}} \otimes \nu$.

\end{theorem}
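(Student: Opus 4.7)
The plan is to follow Nagaev's perturbative method. Writing $h_\nu := d\nu/dm \in \mathcal{B}$, introduce the twisted annealed transfer operator
$$
P_t f := P(e^{it\varphi} f), \qquad t \in \R.
$$
The first step is to prove the characteristic function identity
$$
\mathbb{E}_{\tilde{\mathbb{P}} \otimes \nu}\big[e^{itS_n}\big] = \int_X P_t^n h_\nu \, dm.
$$
Setting $F_n(x) := \int_{\tilde{\Omega}} e^{itS_n(\underline{\omega}, x)} d\tilde{\mathbb{P}}(\underline{\omega})$, the i.i.d.\ structure of $\tilde{\mathbb{P}}$ together with a shift of the dummy indices $\omega_k \mapsto \omega_{k+1}$ yields the recursion $F_n = e^{it\varphi} \cdot UF_{n-1}$ with $F_0 \equiv 1$; iterating the duality $\int Pf \cdot g \, dm = \int f \cdot Ug \, dm$ produces the claimed identity.

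Next, I would apply perturbation theory to the family $\{P_t\}$ near $t = 0$. Since $\varphi \in \mathcal{B}_0$ is bounded, the exponential series $e^{it\varphi} = \sum_k (it\varphi)^k / k!$ converges in a Banach algebra acting continuously on $\mathcal{B}$ (this is routine in every example of Section \ref{framework}), and together with the hypothesis $\|fg\| \le C\|f\|_0 \|g\|$ this gives that $t \mapsto P_t$ is analytic from $\R$ into the bounded operators on $\mathcal{B}$. The spectral gap assumption on $P_0 = P$ together with Kato's classical perturbation theorem then furnishes, for $|t| \le \delta$ with $\delta > 0$ small, a decomposition
$$
P_t = \lambda(t) \Pi(t) + Q(t),
$$
where $\lambda(\cdot), \Pi(\cdot), Q(\cdot)$ depend analytically on $t$, $\lambda(0) = 1$, $\Pi(0) = \Pi$ is a rank-one projector onto $\mathbb{C} h$, $\Pi(t) Q(t) = Q(t) \Pi(t) = 0$, and $\sup_{|t| \le \delta} \|Q(t)^n\|_{\mathcal{B} \to \mathcal{B}} \le C r^n$ for some $r < 1$.

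The main obstacle is computing the Taylor expansion of $\lambda(\cdot)$ at zero. Let $h(t)$ denote the normalized eigenvector $P_t h(t) = \lambda(t) h(t)$, $\int h(t)\,dm = 1$, $h(0) = h$. Differentiating once at $t = 0$ gives $iP(\varphi h) + Ph'(0) = \lambda'(0) h + h'(0)$; integrating against $m$ and using $P^* m = m$ and $\int \varphi\,d\mu = 0$ yields $\lambda'(0) = 0$ and the cohomological equation $(I - P) h'(0) = i P(\varphi h)$. Since $\int \varphi h\,dm = 0$ places $P(\varphi h)$ in the kernel of $\Pi$ where $P$ is a strict contraction, this equation is solved in $\mathcal{B}$ by
$$
h'(0) = i \sum_{n \ge 1} P^n(\varphi h).
$$
Differentiating the eigenvalue relation twice at $t = 0$, integrating against $m$, and applying the duality $\int P^n(\varphi h) \cdot \varphi \, dm = \int \varphi \, U^n \varphi \, d\mu$ then gives
$$
\lambda''(0) = -\int_X \varphi^2 \, d\mu - 2 \sum_{n \ge 1} \int_X \varphi \, U^n \varphi \, d\mu = -\sigma^2
$$
by the Green-Kubo formula of Proposition \ref{green_kubo}. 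Hence $\lambda(t) = 1 - \sigma^2 t^2 / 2 + o(t^2)$ as $t \to 0$.

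To conclude, substitute $t/\sqrt{n}$ for $t$ in the Fourier identity:
$$
\mathbb{E}_{\tilde{\mathbb{P}} \otimes \nu}\big[e^{i(t/\sqrt{n}) S_n}\big] = \lambda(t/\sqrt{n})^n \int_X \Pi(t/\sqrt{n}) h_\nu \, dm + \int_X Q(t/\sqrt{n})^n h_\nu \, dm.
$$
The last integral is $O(r^n)$ via the continuous embedding $\mathcal{B} \hookrightarrow L^1(m)$; by continuity of $\Pi(\cdot)$ at $0$ and the identity $\Pi h_\nu = \big(\int h_\nu \, dm\big) h = h$, the first integral tends to $\int_X h \, dm = 1$; and $\lambda(t/\sqrt{n})^n \to e^{-\sigma^2 t^2/2}$. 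Lévy's continuity theorem then delivers $S_n / \sqrt{n} \Rightarrow \mathcal{N}(0, \sigma^2)$ under $\tilde{\mathbb{P}} \otimes \nu$.
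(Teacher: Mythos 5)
Your proposal is correct and follows essentially the same route as the paper: Nagaev's perturbative method via the twisted operators $P_t f = P(e^{it\varphi}f)$, the characteristic function identity (the paper's Lemma \ref{fund}, proved there by induction rather than your equivalent recursion), analytic perturbation of the spectral gap, the expansion $\lambda(t)=1-\sigma^2t^2/2+o(t^2)$, and L\'evy's continuity theorem. The only cosmetic difference is that you compute $\lambda'(0)$ and $\lambda''(0)$ by differentiating the eigenvalue equation directly, whereas the paper invokes Corollary III.11 of \cite{HH}; the resulting formulas and the Green--Kubo identification with Proposition \ref{green_kubo} coincide.
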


\begin{theorem}[Large Deviation Principle] \label{LDP_spec}
Suppose that $\sigma^2 > 0$. Then there exists a non-negative rate function $c$, continuous, strictly convex, vanishing only at $0$, such that for every $\nu \in \mathcal{M}_{\mathcal{B}}$ and every sufficiently small $\epsilon >0$, we have $$\lim_{n \to \infty} \frac{1}{n} \log \tilde{\mathbb{P}} \otimes \nu (S_n > n \epsilon) = - c(\epsilon)$$

\end{theorem}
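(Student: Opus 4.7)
The plan is to implement Nagaev's perturbative spectral method, exactly as developed in \cite{HH}. For complex $t$ in a neighbourhood of $0$, define the twisted annealed transfer operator $L_t : \mathcal{B} \to \mathcal{B}$ by $L_t f = P(e^{t \varphi} f)$. Since $\varphi \in \mathcal{B}_0$ and since $\|u v\| \le C \|u\|_0 \|v\|$ for $u \in \mathcal{B}_0, v \in \mathcal{B}$, the series $e^{t \varphi} = \sum_{k \ge 0} t^k \varphi^k/k!$ converges in $\mathcal{B}_0$ (using that $\mathcal{B}_0$ is a Banach algebra in all the concrete examples of Section~\ref{framework}), so that $t \mapsto L_t$ is an analytic family of bounded operators on $\mathcal{B}$, with $L_0 = P$. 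The first step is then to establish the fundamental identity
$$
\mathbb{E}_{\tilde{\mathbb{P}} \otimes \nu}(e^{t S_n}) = \int_X L_t^n h_\nu \, dm, \qquad h_\nu := d\nu/dm \in \mathcal{B},
$$
by an induction on $n$ that uses the dualities $\int (Pf) g \, dm = \int f (Ug) \, dm$ and $U^n g(x) = \int_{\tilde{\Omega}} g(T_{\omega_n} \cdots T_{\omega_1} x) \, d\tilde{\mathbb{P}}$ already recorded in Section~\ref{framework}.

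The second step exploits the spectral gap of $L_0 = P$: since $1$ is a simple isolated eigenvalue with the remainder of the spectrum confined to a disk of radius $< 1$, the Kato--Rellich analytic perturbation theorem yields, for $t$ in a small neighbourhood $V$ of $0$, a simple eigenvalue $\lambda(t)$ analytic in $t$, an associated rank-one spectral projector $\Pi_t$ analytic in $t$, and a decomposition
$$
L_t^n = \lambda(t)^n \Pi_t + Q_t^n, \qquad \|Q_t^n\|_{\mathcal{B}} \le C \rho^n,
$$
with $\rho < 1$ uniform in $t \in V$. Differentiating the eigenvalue equation $L_t h_t = \lambda(t) h_t$ at $t = 0$, and using $\int \varphi \, d\mu = 0$ together with Proposition~\ref{green_kubo}, one gets $\lambda(0) = 1$, $\lambda'(0) = 0$, and $\lambda''(0) = \sigma^2$. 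Therefore $\Lambda(t) := \log \lambda(t)$ is real-analytic near $0$, with $\Lambda(0) = \Lambda'(0) = 0$ and $\Lambda''(0) = \sigma^2 > 0$, hence strictly convex on a neighbourhood of $0$.

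Inserting the spectral decomposition into the fundamental identity gives
$$
\mathbb{E}_{\tilde{\mathbb{P}} \otimes \nu}(e^{t S_n}) = \lambda(t)^n \Bigl( \int_X \Pi_t h_\nu \, dm \Bigr) + O(\rho^n),
$$
and the prefactor is analytic in $t$ and equals $\int h \, dm = 1$ at $t = 0$, so it is bounded away from $0$ on a (possibly smaller) neighbourhood of $0$. Consequently $\frac{1}{n} \log \mathbb{E}_{\tilde{\mathbb{P}} \otimes \nu}(e^{t S_n}) \to \Lambda(t)$ for real $t$ close to $0$. The exponential Chebyshev inequality, optimised over $t \ge 0$, then yields the upper bound $\limsup_n \frac{1}{n} \log \tilde{\mathbb{P}} \otimes \nu(S_n > n \epsilon) \le -c(\epsilon)$, where $c(\epsilon) := \sup_t (t \epsilon - \Lambda(t))$ is the Legendre transform of $\Lambda$. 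The corresponding properties of $c$ (continuity, strict convexity, $c \ge 0$ with equality only at $0$) are inherited from those of $\Lambda$ by Legendre duality on a neighbourhood of $0$.

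The matching lower bound is the main obstacle and requires a change of probability. At the critical parameter $t_\epsilon \in (0, \infty)$ solving $\Lambda'(t_\epsilon) = \epsilon$, one tilts the measure $\tilde{\mathbb{P}} \otimes \nu$ using the normalised eigenfunction $h_{t_\epsilon}$ of $L_{t_\epsilon}$: the tilted process becomes a centred Birkhoff-type sum whose generating function is again controlled by a quasi-compact operator (a conjugate of $L_t$), so a local (Berry--Esseen / Edgeworth) CLT for the tilted process produces a matching lower bound of the form $\frac{1}{\sqrt{n}} e^{-n c(\epsilon)}$, up to subexponential corrections. The technically delicate points are (i) that the analytic family $L_t$ on $\mathcal{B}$ is genuinely analytic --- which follows from the algebra/multiplier property above --- and (ii) obtaining the local CLT for the tilted operator uniformly in a neighbourhood of $t_\epsilon$; both are standard within the Nagaev--Guivarc'h framework \cite{HH} once the spectral description of $L_t$ is in place.
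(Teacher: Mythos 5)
Up to the convergence of the normalized cumulant generating function your argument is the paper's: the same Laplace operators $P_t f = P(e^{t\varphi}f)$, the same fundamental identity (Lemma \ref{fund}), the same analytic perturbation theory (Lemma \ref{pert}), and the identification $\Lambda''(0)=\lambda''(0)=\sigma^2$ via Proposition \ref{green_kubo}. Two small points you gloss over: before taking logarithms you need $\lambda(\theta)$ to be real and positive for real $\theta$, and the prefactor $\langle m(\theta),h_\nu\rangle\,\langle m,h(\theta)\rangle$ to be positive (not merely nonzero); the paper settles this with a short lemma showing $\lambda(\theta)>0$ and redefining $h(\theta)$, $m(\theta)$ so that they are positive. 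Also, the analyticity of $t\mapsto P_t$ does not require $\mathcal{B}_0$ to be a Banach algebra or any appeal to the concrete examples: iterating the multiplier inequality gives $\|P(\varphi^n f)\|\le \|P\|\,(C\|\varphi\|_0)^n\|f\|$, which is exactly how the paper stays inside the abstract framework.

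The genuine gap is your lower bound. You propose to tilt the dynamics by the eigenfunction $h_{t_\epsilon}$ and then prove a local (Berry--Esseen/Edgeworth) CLT for the tilted process. In the abstract setting this is not justified: the conjugated operator $f\mapsto \lambda(t_\epsilon)^{-1}h_{t_\epsilon}^{-1}P_{t_\epsilon}(h_{t_\epsilon}f)$ is only meaningful and quasi-compact on $\mathcal{B}$ if $h_{t_\epsilon}$ is bounded away from zero and multiplication by $h_{t_\epsilon}^{\pm 1}$ is bounded on $\mathcal{B}$, none of which follows from assumptions (1)--(6) (the positivity argument only yields $h(\theta)\ge 0$); and a genuine local limit theorem would need an aperiodicity hypothesis that Theorem \ref{LDP_spec} does not assume. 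None of this machinery is needed for a limit at the logarithmic scale: once $\frac{1}{n}\log\mathbb{E}_{\tilde{\mathbb{P}}\otimes\nu}(e^{\theta S_n})\to\Lambda(\theta)$ on a real neighbourhood of $0$ with $\Lambda$ smooth and $\Lambda''(0)=\sigma^2>0$, the local G\"artner--Ellis theorem (lemma XIII.2 in \cite{HH}, see also \cite{DZ, El}) delivers both the upper and the lower bound together with the stated properties of $c$; its lower-bound proof tilts only the laws of $S_n$ as real random variables, not the underlying random dynamical system. This is precisely how the paper concludes. So either invoke that theorem at the point where you currently start tilting, or replace the tilted-dynamics step by the elementary measure-tilting argument on $\mathbb{R}$; as written, the step you defer as ``standard'' is the one that does not go through under the paper's hypotheses.
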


\noindent In particular, these theorems are valid for both the reference measure $m$ and the stationary one $\mu$, with the same asymptotic variance and the same rate function.

We introduce Laplace operators, which will encode the moment-generating function of the process. For every $z \in \mathbb{C}$, we define $P_z$ by $P_z(f) = P(e^{z \varphi} f)$. Thanks to our assumption on $\mathcal{B}_0$, this a well defined and continuous operator on $\mathcal{B}$, and the map $z \mapsto P_z$ is complex-analytic on $\mathbb{C}$ : indeed, if we define $C_n(f) = P(\varphi^n f)$, then  $P_z = \sum_{n \ge 0} \frac{z^n}{n!}C_n$, and this series is convergent on the whole complex plane since $\|C_n \| \le (C \| \varphi\|_0)^n \| P\|$.


We have the following fundamental relation :

\begin{lemma} \label{fund}
For every $n \ge 0$ and every $f \in \mathcal{B}$, we have $$\int_{\tilde{\Omega}} \int_X e^{z S_n( \underline{\omega}, x)} f(x) \, dm(x) \, d\tilde{\mathbb{P}}(\underline{\omega}) = \int_X P^n_z(f) \, dm.$$
\end{lemma}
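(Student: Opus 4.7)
The plan is to proceed by induction on $n \ge 0$. The base case $n=0$ is trivial: $S_0 \equiv 0$ and $P_z^0 = \mathrm{Id}$, so both sides reduce to $\int_X f\, dm$.

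For the induction step, the starting point is the cocycle identity
$$ S_{n+1}(\underline{\omega}, x) = \varphi(x) + S_n(\theta\underline{\omega}, T_{\omega_1} x),$$
obtained by re-indexing the defining sum $S_n = \sum_{k=0}^{n-1} \varphi(T_{\omega_k} \cdots T_{\omega_1} x)$. Using the product structure $d\tilde{\mathbb{P}}(\underline{\omega}) = d\mathbb{P}(\omega_1)\, d\tilde{\mathbb{P}}(\theta\underline{\omega})$ together with Fubini's theorem, the left-hand side at level $n+1$ rewrites as
$$ \int_{\Omega} \int_X e^{z\varphi(x)}\, f(x)\, G(T_{\omega_1} x)\, dm(x)\, d\mathbb{P}(\omega_1),$$
where $G(y) := \int_{\tilde{\Omega}} e^{z S_n(\underline{\omega}, y)}\, d\tilde{\mathbb{P}}(\underline{\omega})$.

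Next, I would invoke the duality $\int_X P_{\omega_1}(F) \cdot H\, dm = \int_X F \cdot (H \circ T_{\omega_1})\, dm$ with $F = f\, e^{z\varphi}$ and $H = G$ to eliminate the $T_{\omega_1}$ inside the integrand; averaging over $\omega_1$ then identifies
$$ \int_{\Omega} P_{\omega_1}(f\, e^{z\varphi})\, d\mathbb{P}(\omega_1) = P(f\, e^{z\varphi}) = P_z(f) \in \mathcal{B}$$
(the last membership is the boundedness of $P_z$ on $\mathcal{B}$ recorded just before the lemma). One more Fubini interchange rewrites the expression as
$$ \int_{\tilde{\Omega}} \int_X e^{z S_n(\underline{\omega}, y)}\, P_z(f)(y)\, dm(y)\, d\tilde{\mathbb{P}}(\underline{\omega}),$$
to which the induction hypothesis applied to $P_z(f)$ yields $\int_X P_z^n\bigl(P_z(f)\bigr)\, dm = \int_X P_z^{n+1}(f)\, dm$, completing the step.

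The argument is essentially careful bookkeeping, and the only substantive point is justifying Fubini at each interchange. Since $\varphi$ is bounded on $X$ and $z \in \mathbb{C}$ is fixed, $|e^{z S_n}|$ is uniformly bounded, so together with $f \in \mathcal{B} \subset L^1(m)$ and the product structure of $\tilde{\mathbb{P}}$ coming from the i.i.d.\ assumption, absolute integrability is automatic and all interchanges are licit.
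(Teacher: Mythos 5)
Your proof is correct and follows essentially the same route as the paper's: induction on $n$, the cocycle identity $S_{n+1}(\underline{\omega},x)=\varphi(x)+S_n(\theta\underline{\omega},T_{\omega_1}x)$, duality between transfer and Koopman operators, and the product structure of $\tilde{\mathbb{P}}$. The only differences are cosmetic — you run the computation from the left-hand side forward and invoke the duality map-by-map before averaging, whereas the paper starts from $\int_X P_z^{n+1}f\,dm$ and works with the annealed operators $P$ and $U$ directly — and your explicit justification of the Fubini interchanges is a welcome touch.
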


\begin{proof}
We proceed by induction on $n$. The case $n = 0$ is trivial. Assume that the relation is valid for some $n \ge 0$, and all $f \in \mathcal{B}$. Let $f$ be a member of $\mathcal{B}$. Since $P_z(f)$ belongs to $\mathcal{B}$, the induction hypothesis gives $$ \begin{aligned}  \int_X P_z^{n+1}(f) \, dm &= \int_X P^n_z(P_z(f)) \, dm = \int_{\tilde{\Omega}} \int_X e^{z S_n(\underline{\omega}, x)} P_z(f)(x) \, dm(x) d \tilde{\mathbb{P}}( \underline{\omega})& \\ &= \int_{\tilde{\Omega}} \int_X e^{z S_n(\underline{\omega}, x)} P(e^{z \varphi}f)(x) \, dm(x) d \tilde{\mathbb{P}}( \underline{\omega})& \\ & = \int_{\tilde{\Omega}} \int_X U(e^{z S_n(\underline{\omega}, \, . \,)})(x) e^{z \varphi(x)} f(x) \, dm(x) d \tilde{\mathbb{P}}( \underline{\omega})& \\ &= \int_X \int_{\tilde{\Omega}} \int_{\Omega} e^{z (\varphi(x) + S_n(\underline{\omega}, T_{\omega} x))} \, d\mathbb{P}(\omega) d \tilde{\mathbb{P}}( \underline{\omega}) f(x)  dm(x)&
 \end{aligned}$$
But $\varphi(x) + S_n(\underline{\omega}, T_\omega x) = S_{n+1}(\omega \underline{\omega}, x)$, where $\omega \underline{\omega}$ stands for the concatenation $(\omega, \omega_1, \omega_2, \ldots)$ if $\underline{\omega} = (\omega_1, \omega_2, \ldots)$. As $\int_{\tilde{\Omega}} \int_{\Omega} e^{z S_{n+1}( \omega \underline{\omega}, x)}  \, d\mathbb{P}(\omega) d \tilde{\mathbb{P}}( \underline{\omega}) = \int_{\tilde{\Omega}} e^{z S_{n+1}(\underline{\omega}, x)} d \tilde{\mathbb{P}}( \underline{\omega})$ because of the product structure of $\tilde{\mathbb{P}}$, we obtain the formula for $n+1$ and $f$. \qed

\end{proof}

If $f$ is the density w.r.t. $m$ of a probability measure $\nu \in \mathcal{M}_{\mathcal{B}}$, by the previous lemma, we know that the moment-generating function of $S_n$ under the probability measure $\tilde{\mathbb{P}} \otimes \nu$ is given by $\int_X P^n_z (f) \, dm$  This leads us to understand the asymptotic behavior of the iterates of the Laplace operators $P_z$. Since they are smooth perturbations of the quasi-compact operator $P$, one can apply here the standard theory of perturbations for linear operators (see for instance theorem III.8 in \cite{HH}), and get the following :

\begin{lemma} \label{pert}

There exists $\epsilon_1 > 0$, $\eta_1>0$, $\eta_2 > 0$, and complex-analytic functions $\lambda(.)$, $h(.)$, $m(.)$, $Q(.)$, all defined on $\mathbb{D}_{\epsilon_1} = \{z \in \mathbb{C} \, / \, \left|z \right| < \epsilon_1 \}$, which take values respectively in $\mathbb{C}$, $\mathcal{B}$, $\mathcal{B}^{\star}$, $\mathcal{L}(\mathcal{B})$ and satisfying for all $z \in \mathbb{D}_{\epsilon_1}$ :

\begin{enumerate}

\item $ \lambda(0) = 1, h(0) = h, m(0) = m, Q(0) = Q$;

\item $P_z(f) = \lambda(z) \langle m(z), f \rangle h(z) + Q(z) f$ for all $f \in \mathcal{B}$;

\item $\langle m(z), h(z) \rangle = 1$;

\item $Q(z)h(z) = 0$ and $m(z)Q(z) = 0$;

\item $|\lambda(z)| > 1 - \eta_1$;

\item $\|Q(z)^n \| \le C (1 - \eta_1 - \eta_2)^n$.

\end{enumerate}
Furthermore, $\left| \langle m, Q(z)^n f \rangle \right| \le C |z| (1 - \eta_1 - \eta_2)^n \|f\|$ for all $f \in \mathcal{B}$ and $z \in \mathbb{D}_{\epsilon_1}$.

\end{lemma}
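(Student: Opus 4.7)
The plan is to apply a standard analytic perturbation result (theorem III.8 in \cite{HH}, which is the Keller--Liverani-type packaging of Kato's theory) to the analytic family $z \mapsto P_z$. The input is already established: the series $P_z = \sum_{n \ge 0} \frac{z^n}{n!} C_n$ shows that $z \mapsto P_z \in \mathcal{L}(\mathcal{B})$ is entire, with $P_0 = P$, and $P$ is quasi-compact with spectral decomposition $P = \Pi + Q$ where $\Pi f = \left(\int_X f\,dm\right) h$ is the rank-one projection onto the simple isolated eigenvalue $1$ and the spectral radius of $Q$ is some $r < 1$. The perturbation theorem then produces $\epsilon_1 > 0$ and analytic functions $\lambda(\cdot)$, $\Pi(\cdot)$, $Q(\cdot)$ on $\mathbb{D}_{\epsilon_1}$ with $\lambda(0)=1$, $\Pi(0)=\Pi$, $Q(0)=Q$, so that $P_z = \lambda(z)\Pi(z) + Q(z)$, where $\Pi(z)$ is the rank-one spectral projection associated with $\lambda(z)$ (isolated from the rest of $\sigma(P_z)$) and $Q(z)\Pi(z) = \Pi(z)Q(z) = 0$. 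This yields items (1), (2) and (4) upon writing $\Pi(z) f = \langle m(z), f\rangle h(z)$ for some analytic selections $h(z) \in \mathcal{B}$ and $m(z) \in \mathcal{B}^{\star}$; the normalization $\langle m(z), h(z)\rangle = 1$ (item 3) can be arranged by rescaling $h(z)$, and one checks $h(0) = h$, $m(0) = m$.

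Items (5) and (6) are now matters of continuity. Since $\lambda(\cdot)$ is analytic with $\lambda(0) = 1$, we have $|\lambda(z)| > 1 - \eta_1$ on $\mathbb{D}_{\epsilon_1}$ after shrinking $\epsilon_1$ and choosing $\eta_1 > 0$ small. For the bound on $Q(z)^n$, pick any $r < \rho < 1-\eta_1$; by upper semicontinuity of the spectral radius along the analytic family $z \mapsto Q(z)$, the spectrum of $Q(z)$ sits inside $\{|w| < \rho\}$ for $|z|$ small (shrink $\epsilon_1$ again). Since $\mathbb{D}_{\epsilon_1}$ is precompact and $z \mapsto Q(z)$ is continuous in operator norm, the standard estimate from the spectral radius formula gives a uniform $C$ such that $\|Q(z)^n\| \le C\rho^n$ on $\mathbb{D}_{\epsilon_1}$, and one sets $\rho = 1 - \eta_1 - \eta_2$.

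For the last assertion, observe that $m \circ Q = 0$ on $\mathcal{B}$: indeed $\int_X Q f\,dm = \int_X Pf\,dm - (\int_X f\,dm)(\int_X h\,dm) = 0$ because $P^{\star} m = m$ and $\int h\,dm = 1$. Iterating, $\langle m, Q^n f\rangle = 0$ for every $n \ge 1$ and every $f \in \mathcal{B}$. Fix such $f$ and $n$, and consider the scalar function $G(z) := \langle m, Q(z)^n f\rangle$, which is holomorphic on $\mathbb{D}_{\epsilon_1}$ and satisfies $G(0) = 0$. Writing $G(z) = z \cdot \tilde{G}(z)$ with $\tilde G$ holomorphic and applying the maximum modulus principle on the circle $|w| = \epsilon_1$ (after a harmless shrinkage to work on a slightly smaller disk), one gets
\[
|G(z)| \le \frac{|z|}{\epsilon_1}\,\sup_{|w|=\epsilon_1} |G(w)| \le \frac{|z|}{\epsilon_1}\,\|m\|_{\mathcal{B}^{\star}}\,\sup_{|w|=\epsilon_1} \|Q(w)^n f\| \le C|z|(1-\eta_1-\eta_2)^n\|f\|,
\]
using step two for the uniform bound on $\|Q(w)^n\|$. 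The only mildly delicate point in the entire argument is this last estimate; once one notices that $m \circ Q = 0$ and hence $G$ vanishes at the origin, Cauchy's bound gives the desired $O(|z|)$ improvement, uniformly in $n$.
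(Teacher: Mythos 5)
Your proposal is correct and takes essentially the same route as the paper, which obtains the whole lemma by invoking the standard analytic perturbation theorem for the quasi-compact operator $P$ (Theorem III.8 in Hennion--Herv\'e), exactly as you do for items (1)--(6). Your short Schwarz-lemma argument for the final estimate --- using $\langle m, Q f\rangle = 0$ for all $f$, so that $z \mapsto \langle m, Q(z)^n f\rangle$ vanishes at $z=0$ and Cauchy's bound yields the factor $|z|$ uniformly in $n$ --- is a clean, self-contained derivation of the ``furthermore'' statement that the paper likewise takes from the cited theorem.
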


For all $n \ge 0$, we hence have $P_z^n(f) = \lambda(z)^n \langle m(z), f \rangle h(z) + Q(z)^n f$. The asymptotic behavior of $P_z^n$ is clearly intimately related to the behavior of the leading eigenvalue $\lambda(z)$ in a neighborhood of $0$. We have the following :

\begin{lemma}

The leading eigenvalue $\lambda(.)$ satisfies $\lambda'(0) = \int \varphi d\mu = 0$ and $\lambda''(0) = \sigma^2 \ge 0$.

\end{lemma}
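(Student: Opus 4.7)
The plan is to compute the Taylor coefficients of $\lambda$ at $0$ by matching them against the moments of $S_n$ under $\tilde{\mathbb{P}} \otimes \mu$, using Lemma \ref{fund} and the spectral decomposition of Lemma \ref{pert}.

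First I would specialize Lemma \ref{fund} to $f = h$, the fixed density of $P$, which gives the generating-function identity
\begin{equation*}
\mathbb{E}_{\tilde{\mathbb{P}} \otimes \mu}\!\left[ e^{z S_n} \right] \;=\; \int_X P_z^n h \, dm.
\end{equation*}
Plugging in the decomposition $P_z^n h = \lambda(z)^n \langle m(z), h\rangle \, h(z) + Q(z)^n h$ and integrating against $m$, one obtains
\begin{equation*}
\mathbb{E}_{\tilde{\mathbb{P}} \otimes \mu}\!\left[ e^{z S_n} \right] \;=\; \lambda(z)^n A(z) + R_n(z),
\end{equation*}
where $A(z) = \langle m(z), h\rangle \langle m, h(z)\rangle$ is complex-analytic on $\mathbb{D}_{\epsilon_1}$ with $A(0)=1$, and $R_n(z) = \langle m, Q(z)^n h\rangle$.

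Next I would control $R_n$ and its derivatives at $0$. The last estimate of Lemma \ref{pert} gives $|R_n(z)| \le C|z|(1-\eta_1-\eta_2)^n \|h\|$ on $\mathbb{D}_{\epsilon_1}$, so Cauchy's integral formula on a small circle of fixed radius $r < \epsilon_1$ yields $|R_n^{(k)}(0)| \le C_{k,r} (1-\eta_1-\eta_2)^n$ for $k=1,2$. Thus $R_n$ and all its low-order derivatives at $0$ are exponentially small in $n$.

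Now I would differentiate the identity at $z=0$. Using $\lambda(0)=1$ and the Leibniz rule,
\begin{equation*}
\mathbb{E}[S_n] \;=\; n\lambda'(0) + A'(0) + R_n'(0), \qquad
\mathbb{E}[S_n^2] \;=\; n(n-1)\lambda'(0)^2 + n\lambda''(0) + 2n\lambda'(0)A'(0) + A''(0) + R_n''(0).
\end{equation*}
By $F$-invariance of $\tilde{\mathbb{P}} \otimes \mu$, $\mathbb{E}_{\tilde{\mathbb{P}} \otimes \mu}[X_k] = \int \varphi \, d\mu = 0$, so $\mathbb{E}[S_n]=0$. Dividing the first identity by $n$ and letting $n\to\infty$ (the $A'(0)$ and $R_n'(0)$ terms are $O(1)$ and $O(\rho^n)$ respectively) forces $\lambda'(0)=0$, which in particular agrees with $\int \varphi \, d\mu = 0$. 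Substituting $\lambda'(0)=0$ into the second identity, dividing by $n$, and invoking Proposition \ref{green_kubo} gives $\lambda''(0) = \lim_{n\to\infty} \frac{1}{n}\mathbb{E}_{\tilde{\mathbb{P}}\otimes\mu}[S_n^2] = \sigma^2 \ge 0$.

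There is no serious obstacle; the only point requiring care is the passage from the uniform bound on $R_n(z)$ to bounds on $R_n'(0)$ and $R_n''(0)$, which is handled cleanly by Cauchy's estimates since all the data in Lemma \ref{pert} are genuinely analytic on $\mathbb{D}_{\epsilon_1}$.
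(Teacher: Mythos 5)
Your argument is correct, and it reaches the conclusion by a genuinely different route than the paper. The paper computes $\lambda'(0)$ and $\lambda''(0)$ directly from the perturbation-theoretic formulas for derivatives of a simple isolated eigenvalue (corollary III.11 of \cite{HH}): it evaluates $\langle m, P'(0)h\rangle$, identifies the auxiliary element $\tilde{h}=\sum_{n\ge 1}P^n(\varphi h)$ solving $(I-P)\tilde{h}=P(\varphi h)$ with $\langle m,\tilde h\rangle=0$, and recognizes in $\langle m,P''(0)h\rangle+2\langle m,P'(0)\tilde h\rangle$ the Green--Kubo series of Proposition \ref{green_kubo}. You instead run a moment-matching argument: you apply Lemma \ref{fund} with $f=h$ to write $\mathbb{E}_{\tilde{\mathbb{P}}\otimes\mu}[e^{zS_n}]=\lambda(z)^nA(z)+R_n(z)$, control $R_n$ and its low-order derivatives at $0$ via the bound of Lemma \ref{pert} and Cauchy's estimates, and then read off $\lambda'(0)$ and $\lambda''(0)$ from $\mathbb{E}[S_n]=0$ and $\tfrac1n\mathbb{E}[S_n^2]\to\sigma^2$ (the latter being exactly the limit statement of Proposition \ref{green_kubo}). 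Your route avoids the explicit eigenvalue-derivative formulas and the identification of $\tilde h$ through the reduced resolvent, at the price of leaning on the analyticity of all the ingredients of Lemma \ref{pert} and on Proposition \ref{green_kubo} as a black box for the variance; the paper's computation is more algebraic and rederives the Green--Kubo expression at the operator level, which makes the equality $\lambda''(0)=\sigma^2$ visible term by term, whereas yours is shorter and uses only soft facts (Leibniz rule, Cauchy estimates, stationarity). The only points needing care in your version --- analyticity of $A$ and of $z\mapsto\langle m,Q(z)^nh\rangle$, uniformity of the remainder bounds on a fixed small circle, and analyticity of the moment-generating function (which holds because $\varphi$ is bounded) --- are all available from the paper's hypotheses, so there is no gap.
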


\begin{proof}

By corollary III.11 in \cite{HH}, $\lambda'(0) = \langle m(0), P'(0)h(0) \rangle$. As $m(0) = m$, $h(0) = h = \frac{d \mu}{dm}$ and $P'(0) f = C_1(f) = P(\varphi f)$ for any $f \in \mathcal{B}$, since $P(z) = \sum_{n \ge 0} \frac{z^n}{n!}C_n$ with $C_n(f) = P(\varphi^n f)$, the formula for $\lambda'(0)$ reads as $$\lambda'(0) = \langle m, P(\varphi h) \rangle = \langle m, \varphi h \rangle = \int \varphi d \mu = 0.$$

Using again corollary III.11 in \cite{HH}, we have $$\lambda''(0) = \langle m(0), P''(0) h(0) \rangle + 2 \langle m(0), P'(0) \tilde{h} \rangle,$$ where $\tilde{h}$ is the unique element of $\mathcal{B}$ satisfying $\langle m(0), \tilde{h} \rangle = 0$ and $\left( \lambda(0) - P(0) \right) \tilde{h} = \left( P'(0) - \lambda'(0) \right) h(0)$.

This implies that $\tilde{h}$ is the unique element of $\mathcal{B}$ satisfying $\langle m, \tilde{h} \rangle = 0$ and $ (I - P) \tilde{h} = P(\varphi h)$. By corollary III.6 in \cite{HH}, $\tilde{h}$ is given by $\tilde{h} = \sum_{n\ge 0} Q(0)^n (\varphi h)$. But $Q(0)^n(\varphi h) = P^n(\varphi h) - \langle m, \varphi h \rangle h = P^n(\varphi h)$, since $\langle m, \varphi h \rangle = \int \varphi d \mu = 0$. Hence $\tilde{h} = \sum_{n \ge 0} P^n(P(\varphi h)) = \sum_{n \ge 1} P^n(\varphi h)$.

On one hand, we have $\langle m(0), P''(0) h(0) \rangle = \langle m, C_2(h) \rangle = \langle m, P( \varphi^2 h) \rangle = \int \varphi^2 d \mu.$ On the other hand, $$\langle m(0), P'(0) \tilde{h} \rangle = \langle m, P(\varphi \tilde{h}) \rangle = \langle m, \varphi \tilde{h} \rangle = \sum_{n \ge 1} \langle m, \varphi P^n(\varphi h) \rangle = \sum_{n \ge 1} \int \varphi P^n(\varphi h) dm = \sum_{n \ge 1} \int U^n \varphi \, \varphi d \mu.$$ Summing these two parts, we recognize the formula for $\sigma^2$ given by proposition \ref{green_kubo}. \qed
\end{proof}

Then, $\lambda(\frac{it}{\sqrt{n}})^n = (1 - \frac{\sigma^2 t^2}{2n} + o(\frac{1}{n}))^n$ goes to $e^{-\frac{\sigma^2 t^2}{2}}$, from which it follows that $\mathbb{E}_{\tilde{\mathbb{P}} \otimes \nu}(e^{i \frac{t}{\sqrt{n}} S_n}) = \lambda(\frac{it}{\sqrt{n}})^n \langle m(\frac{it}{\sqrt{n}}), f \rangle \langle m, h(\frac{it}{\sqrt{n}}) \rangle + \langle m, Q(\frac{it}{\sqrt{n}})f \rangle$ goes also to $e^{-\frac{\sigma^2 t^2}{2}}$, for each $t \in \mathbb{R}$, when $n \to \infty$, which it implies the CLT by L\'evy's continuity theorem. Remark that the previous identity holds for any measure $\nu \in \mathcal{M}_{\mathcal{B}}$ and their associated density $f$.

We can furthermore prove a rate of convergence in the CLT, when $\sigma^2 > 0$ :

\begin{lemma}  \label{speed_conv_carac}

There exists $C > 0$ and $\rho < 1$ such that for all $t \in \mathbb{R}$ and $n\ge 0$ with $\frac{\left| t \right|}{\sqrt{n}}$ sufficiently small, and all $\nu \in \mathcal{M}_{\mathcal{B}}$, we have $$\left| \mathbb{E}_{\tilde{\mathbb{P}} \otimes \nu}(e^{i \frac{t}{\sqrt{n}} S_n}) - e^{-\frac{1}{2} \sigma^2 t^2} \right| \le C \| \nu \| \left( e^{- \frac{\sigma^2 t^2}{2}} \left(\frac{|t | + |t|^3}{\sqrt{n}}\right) + \frac{ |t|}{\sqrt{n}} \rho^n \right).$$

\end{lemma}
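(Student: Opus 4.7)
The plan is to insert the spectral decomposition from Lemma~\ref{pert} into the exact formula from Lemma~\ref{fund} and then do a careful Taylor expansion at $z=0$.

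First, I would set $f=d\nu/dm\in\mathcal{B}$ and apply Lemma~\ref{fund} with $z=it/\sqrt{n}$, which gives
$$\mathbb{E}_{\tilde{\mathbb{P}}\otimes\nu}\!\bigl(e^{i\frac{t}{\sqrt n}S_n}\bigr)=\int_X P^n_{it/\sqrt n}(f)\,dm.$$
For $|t|/\sqrt n<\epsilon_1$, Lemma~\ref{pert} expresses this as
$$\lambda\bigl(\tfrac{it}{\sqrt n}\bigr)^{n}\,\bigl\langle m\bigl(\tfrac{it}{\sqrt n}\bigr),f\bigr\rangle\,\bigl\langle m,h\bigl(\tfrac{it}{\sqrt n}\bigr)\bigr\rangle+\bigl\langle m,Q\bigl(\tfrac{it}{\sqrt n}\bigr)^{n}f\bigr\rangle.$$
I would then split the difference with $e^{-\sigma^{2}t^{2}/2}$ into three pieces: (I) $\lambda^{n}\bigl[\langle m(\cdot),f\rangle\langle m,h(\cdot)\rangle-1\bigr]$, (II) $\lambda^{n}-e^{-\sigma^{2}t^{2}/2}$, and (III) $\langle m,Q^{n}f\rangle$.

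Piece (III) is immediate from the last statement of Lemma~\ref{pert}: $|{\rm (III)}|\le C|t|/\sqrt n\cdot(1-\eta_1-\eta_2)^{n}\|f\|$, which supplies the $\rho^{n}|t|/\sqrt n$ term. For piece (I), analyticity of $h(z)$ and $m(z)$ together with $h(0)=h$, $m(0)=m$, $\langle m,f\rangle=\langle m,h\rangle=1$ yields $|\langle m(it/\sqrt n),f\rangle\langle m,h(it/\sqrt n)\rangle-1|\le C|t|/\sqrt n\cdot\|f\|$. Combined with the bound $|\lambda(it/\sqrt n)|^{n}\le Ce^{-\sigma^{2}t^{2}/2}$ (obtained just below), this contributes $Ce^{-\sigma^{2}t^{2}/2}|t|/\sqrt n\cdot\|f\|$.

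The heart of the argument is piece (II). Using the previous lemma's identities $\lambda(0)=1$, $\lambda'(0)=0$, $\lambda''(0)=\sigma^{2}$, I would write $\lambda(z)=1+\frac{\sigma^{2}}{2}z^{2}+O(|z|^{3})$ near $0$, and expand the principal branch of the logarithm to get
$$n\log\lambda\bigl(\tfrac{it}{\sqrt n}\bigr)=-\tfrac{\sigma^{2}t^{2}}{2}+R(t,n),\qquad |R(t,n)|\le C\,\tfrac{|t|^{3}}{\sqrt n},$$
valid whenever $|t|/\sqrt n$ is small enough that the expansion is controlled; taking real parts also yields $|\lambda(it/\sqrt n)|^{n}\le Ce^{-\sigma^{2}t^{2}/2}$ in the same regime, justifying the bound used in (I). Hence $\lambda(it/\sqrt n)^{n}=e^{-\sigma^{2}t^{2}/2}e^{R(t,n)}$ and
$$|{\rm (II)}|\le e^{-\sigma^{2}t^{2}/2}|e^{R(t,n)}-1|.$$
When $|R(t,n)|\le1$, this is bounded by $Ce^{-\sigma^{2}t^{2}/2}|t|^{3}/\sqrt n$; in the remaining range (where $|t|^{3}/\sqrt n$ is large while $|t|/\sqrt n$ stays small, i.e.\ $|t|$ itself is large), I would use the trivial majorant and absorb it using the uniform bound $e^{-\sigma^{2}t^{2}/2}|t|^{3}\le C$, so the estimate $|{\rm (II)}|\le Ce^{-\sigma^{2}t^{2}/2}|t|^{3}/\sqrt n$ survives.

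Assembling (I), (II), (III) and noting $\|f\|=\|\nu\|$ gives precisely
$$C\|\nu\|\Bigl(e^{-\sigma^{2}t^{2}/2}\tfrac{|t|+|t|^{3}}{\sqrt n}+\tfrac{|t|}{\sqrt n}\rho^{n}\Bigr),$$
with $\rho=1-\eta_1-\eta_2<1$. The main technical obstacle is controlling (II) uniformly in the regime where $|t|/\sqrt n$ is small but $|t|$ itself is not; this is handled by the observation above that $e^{-\sigma^{2}t^{2}/2}|t|^{3}$ is globally bounded, which reconciles the Taylor-expansion regime with the large-$|t|$ regime without losing the prefactor $e^{-\sigma^{2}t^{2}/2}$.
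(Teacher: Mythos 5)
Your decomposition is exactly the paper's proof: Lemma \ref{fund} with $z=it/\sqrt n$, the spectral decomposition of Lemma \ref{pert}, analyticity giving $\langle m(it/\sqrt n),f\rangle\langle m,h(it/\sqrt n)\rangle=1+\mathcal{O}(|t|\,\|f\|/\sqrt n)$, the bound $\mathcal{O}(|t|\rho^n\|f\|/\sqrt n)$ for the $Q$-term, and the third-order expansion of $\lambda$ at $0$. In the regime $|t|^3/\sqrt n=\mathcal{O}(1)$ your pieces (I)--(III) reproduce the paper's estimate faithfully; note the paper itself only asserts $\lambda(it/\sqrt n)^n=e^{-\sigma^2t^2/2}+\mathcal{O}\bigl(e^{-\sigma^2t^2/2}|t|^3/\sqrt n\bigr)$ in one line and never discusses the complementary regime.

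The part that does not hold up is precisely the extra regime you single out, $|R(t,n)|>1$ with $|t|/\sqrt n$ small. First, the intermediate claim that taking real parts of $n\log\lambda(it/\sqrt n)=-\tfrac{\sigma^2t^2}{2}+R$ yields $|\lambda(it/\sqrt n)|^n\le Ce^{-\sigma^2t^2/2}$ ``in the same regime'' is not justified there: real parts only give $|\lambda(it/\sqrt n)|^n\le e^{-\sigma^2t^2/2}e^{|R|}$, and $e^{|R|}\le e^{C|t|^3/\sqrt n}$ is a constant only when $|t|^3/\sqrt n$ is bounded, which is the opposite of the case you are treating. The standard substitute $|\lambda(is)|\le 1-\sigma^2s^2/4$ for small $s$ only gives $e^{-\sigma^2t^2/4}$, and in fact the claim itself can fail: since $\log|\lambda(is)|=-\sigma^2s^2/2+c_4s^4+\mathcal{O}(s^6)$ with $c_4$ proportional to the fourth cumulant, one has $|\lambda(it/\sqrt n)|^n\approx e^{-\sigma^2t^2/2+c_4t^4/n}$, whose ratio to $e^{-\sigma^2t^2/2}$ is unbounded on $\{|t|\le\delta\sqrt n\}$ when $c_4>0$. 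Second, the absorption step is logically reversed: $e^{-\sigma^2t^2/2}|t|^3\le C$ is an \emph{upper} bound on the majorant you are trying to dominate with (so the target right-hand side is at most $C/\sqrt n$ there); an upper bound on the majorant cannot be used to absorb an error term -- for that you would need a lower bound. Consequently piece (II), and the use of the same $|\lambda|^n$ bound inside piece (I), are not established once $|t|^3/\sqrt n$ is large; to cover that range one must either weaken the Gaussian prefactor (e.g.\ to $e^{-\sigma^2t^2/4}$, using $\operatorname{Re}R=\mathcal{O}(t^4/n)$, which is all the Berry--Ess\'een and quenched-CLT applications need) or restrict the Taylor-expansion estimate to $|t|^3\le c\sqrt n$, which is in effect the only regime the paper's own one-line computation addresses.
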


\begin{proof}
This follows from the third order differentiability of $\lambda(.)$ at $0$ : for $\frac{t}{\sqrt{n}}$ small enough, $\lambda(\frac{it}{\sqrt{n}})^n = \left( 1 - \frac{\sigma^2 t^2}{2 n} + \mathcal{O}\left(\frac{|t|^3}{n\sqrt{n}}\right) \right)^n = e^{- \frac{\sigma^2 t^2}{2}} + \mathcal{O}\left( e^{-\frac{\sigma^2 t^2}{2}} \frac{|t|^3}{\sqrt{n}}\right)$. Recalling that $f = \frac{d \nu}{dm}$ and $\| \nu \| = \| f \| \ge C$, where the constant $C$ comes from the continuous embedding $\mathcal{B} \subset L^1(m)$ and is independent of $\nu$, we have $$\begin{aligned} &\mathbb{E}_{\tilde{\mathbb{P}} \otimes \nu}(e^{i \frac{t}{\sqrt{n}} S_n})& &=& &\lambda(\frac{it}{\sqrt{n}})^n \langle m(\frac{it}{\sqrt{n}}),f \rangle \langle m, h(\frac{it}{\sqrt{n}}) \rangle + \langle m, Q(\frac{it}{\sqrt{n}})^n f \rangle&  \\  && &=&  &\left( e^{- \frac{\sigma^2 t^2}{2}} + \mathcal{O}\left( e^{-\frac{\sigma^2 t^2}{2}} \frac{|t|^3}{\sqrt{n}}\right) \right) \left( 1 + \mathcal{O}\left( \frac{|t|}{\sqrt{n}} \|f \| \right) \right) + \mathcal{O}\left( \frac{|t|}{\sqrt{n}} \rho^n \| f \| \right),& \end{aligned}$$ where the first line follows from lemma \ref{fund} applied to $f$ and $z = \frac{it}{\sqrt{n}}$ and item 2 of lemma \ref{pert}. This implies the result. \qed
\end{proof}

From this lemma, we deduce that $\left| \mathbb{E}_{\tilde{\mathbb{P}} \otimes \nu}(e^{i \frac{t}{\sqrt{n}} S_n}) - e^{-\frac{1}{2} \sigma^2 t^2} \right| = \mathcal{O}\left( \frac{1 + |t|^3}{\sqrt{n}} \right)$, which will be useful latter, when proving a quenched CLT. The precise estimate of the lemma also implies a rate of convergence of order $\frac{1}{\sqrt{n}}$ in the CLT, using the Berry-Ess\'een inequality. We refer to \cite{HH} or \cite{Dur} for a scheme of proof :

\begin{theorem}

If $\sigma^2 > 0$, there exists $C >0$ such that for all $\nu \in \mathcal{M}_{\mathcal{B}}$ :  $$\sup_{t \in \mathbb{R}} \, \left| \tilde{\mathbb{P}} \otimes \nu \left( \frac{S_n}{\sqrt{n}} \le t \right) - \frac{1}{ \sigma \sqrt{2 \pi}} \int_{-\infty}^t e^{-\frac{u^2}{2 \sigma^2}} du \right| \le \frac{C \| \nu \|}{\sqrt{n}}.$$

\end{theorem}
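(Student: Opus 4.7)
The plan is to deduce this Berry--Esseen type bound from Lemma~\ref{speed_conv_carac} via Esseen's smoothing inequality, which is the standard route in this setting (see \cite{HH}, \cite{Dur}).

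First, I would recall Esseen's inequality: if $F$ is a distribution function and $G$ has a bounded density $g$, then for any $T > 0$,
\begin{equation*}
\sup_{t \in \mathbb{R}} |F(t) - G(t)| \;\le\; \frac{1}{\pi} \int_{-T}^{T} \left| \frac{\phi_F(\tau) - \phi_G(\tau)}{\tau} \right| d\tau \;+\; \frac{C \|g\|_\infty}{T},
\end{equation*}
where $\phi_F, \phi_G$ are the characteristic functions. I apply this with $F$ the distribution of $S_n/\sqrt{n}$ under $\tilde{\mathbb{P}} \otimes \nu$ and $G$ the $\mathcal{N}(0,\sigma^2)$ distribution, whose density has sup norm $\tfrac{1}{\sigma\sqrt{2\pi}}$, independent of $\nu$ and $n$.

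Next, I would choose the cutoff $T = c\sqrt{n}$, where $c > 0$ is small enough that the condition ``$|\tau|/\sqrt{n}$ sufficiently small'' in Lemma~\ref{speed_conv_carac} holds for every $\tau \in [-T,T]$ and every $n$. With this choice, dividing the estimate of Lemma~\ref{speed_conv_carac} (applied with $t = \tau$) by $|\tau|$ gives, for all $\tau \in [-T,T] \setminus \{0\}$,
\begin{equation*}
\left| \frac{\phi_F(\tau) - \phi_G(\tau)}{\tau} \right| \;\le\; \frac{C \|\nu\|}{\sqrt{n}} \Bigl( e^{-\sigma^2 \tau^2/2}(1 + \tau^2) + \rho^n \Bigr).
\end{equation*}
Integrating over $[-T,T]$, the first term contributes at most $\int_{\mathbb{R}} e^{-\sigma^2 \tau^2/2}(1+\tau^2)\, d\tau$, which is a finite constant independent of $n$; the second contributes $2T\rho^n = 2c\sqrt{n}\rho^n$, which is $o(1)$ as $n \to \infty$ since $\rho < 1$. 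Thus the integral term in Esseen's inequality is $O(\|\nu\|/\sqrt{n})$.

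Finally, the boundary term is $C\|g\|_\infty / T = O(1/\sqrt{n})$. Since the continuous embedding $\mathcal{B} \hookrightarrow L^1(m)$ together with the fact that $\nu$ is a probability measure gives $\|\nu\| \ge c_0 > 0$ uniformly, the boundary contribution is also $O(\|\nu\|/\sqrt{n})$. Combining the two gives the desired bound. No step is genuinely hard here: Lemma~\ref{speed_conv_carac} already contains all the dynamical content (through the analytic perturbation of the transfer operator), and the only care needed is the bookkeeping in the choice of $T$ and the verification that the factor $\|\nu\|$ propagates linearly through all bounds, which it does because it appears as a multiplicative constant in Lemma~\ref{speed_conv_carac} and the boundary term is dominated by it.
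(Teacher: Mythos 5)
Your proof is correct and follows exactly the route the paper itself indicates: the paper derives this Berry--Ess\'een bound from Lemma \ref{speed_conv_carac} via the Ess\'een smoothing inequality, referring to \cite{HH} and \cite{Dur} for the standard scheme, which is precisely your argument (including the cutoff $T = c\sqrt{n}$ and the use of $\|\nu\| \ge c_0 > 0$ from the embedding $\mathcal{B} \subset L^1(m)$ to absorb the boundary term).
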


We turn now to the proof of the LDP. For this, we will show the convergence of $\frac{1}{n} \log \mathbb{E}_{\tilde{\mathbb{P}} \otimes \nu}(e^{\theta S_n})$ for small enough $\theta \in \mathbb{R}$ and then apply Gartner-Ellis theorem \cite{DZ, El}. Proofs are a verbatim copy of those from \cite{AV}.

\begin{lemma}

There exists $0 < \epsilon_2 < \epsilon_1$ such that for every $\theta
\in \mathbb{R}$ with $|\theta| < \epsilon_2$, we have $\lambda(\theta)
> 0$. Furthermore, the functions $h(.)$ and $m(.)$ can be redefined in such a way that they still satisfy conclusions of lemma \ref{pert}, while they also verify $h(\theta) \ge 0$, $m(\theta) \ge 0$ for $\theta \in \mathbb{R}$.
\end{lemma}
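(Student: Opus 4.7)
For the first claim, $\lambda(\cdot)$ is complex-analytic with $\lambda(0)=1$. For real $\theta$, the operator $P_\theta f=P(e^{\theta\varphi}f)$ preserves real functions, so $\overline{\lambda(\theta)}$ is also an eigenvalue of $P_\theta$ near $1$; simplicity of $\lambda(\theta)$ (from item~(5) of Lemma~\ref{pert}) forces $\lambda(\theta)=\overline{\lambda(\theta)}\in\mathbb{R}$. Continuity of $\lambda$ at $0$ then produces $\epsilon_2\in(0,\epsilon_1)$ with $\lambda(\theta)>0$ for real $|\theta|<\epsilon_2$.

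The key observation behind the second claim is that for real $\theta$ the operator $P_\theta$ preserves the positive cone $\mathcal{B}^+=\{f\in\mathcal{B}:f\ge 0\}$, because $e^{\theta\varphi}\ge 0$ and the transfer operator $P$ is itself positivity-preserving. By items~(2) and~(6) of Lemma~\ref{pert}, $\lambda(z)^{-n}P_z^n\to\Pi_z$ in the operator norm on $\mathcal{B}$, where $\Pi_z f=\langle m(z),f\rangle h(z)$ is the rank-one spectral projection (the remainder $\lambda(z)^{-n}Q(z)^n$ decays geometrically since $(1-\eta_1-\eta_2)/(1-\eta_1)<1$). Because $\mathcal{B}^+$ is closed in $\mathcal{B}\hookrightarrow L^1(m)$ and each $P_\theta^n$ maps $\mathcal{B}^+$ into itself, the limit yields $\Pi_\theta(\mathcal{B}^+)\subset\mathcal{B}^+$ for real $\theta$. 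By duality, $\Pi_\theta^{\ast}$ sends positive linear functionals on $\mathcal{B}$ to positive ones.

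I would then set
\[
h_{\mathrm{new}}(z):=\Pi_z 1, \qquad m_{\mathrm{new}}(z):=\frac{\Pi_z^{\ast} m}{\int\Pi_z 1\,dm},
\]
both complex-analytic on a (possibly smaller) disc around $0$ because the denominator equals $1$ at $z=0$. A short calculation using $\Pi_z^2=\Pi_z$ gives $\langle m_{\mathrm{new}}(z),h_{\mathrm{new}}(z)\rangle=1$, and at $z=0$ one recovers $h_{\mathrm{new}}(0)=\Pi_0 1=\langle m,1\rangle h=h$ and $m_{\mathrm{new}}(0)=\Pi_0^{\ast} m=m$. Since the rank-one projection $\Pi_z$ itself is unchanged, items~(2)--(4) of Lemma~\ref{pert} persist verbatim, while items~(5)--(6) and the tail estimate only involve $\lambda(z)$ and $Q(z)$. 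For real $\theta$ close to $0$, $h_{\mathrm{new}}(\theta)=\Pi_\theta 1\ge 0$ by the positivity step, and $m_{\mathrm{new}}(\theta)$ is a positive functional divided by the positive real number $\int\Pi_\theta 1\,dm$, hence non-negative. The only (mild) obstacle is preserving analyticity while fixing signs: a naive renormalization $h\mapsto\alpha(z)h$, $m\mapsto\alpha(z)^{-1}m$ would require constructing an explicit complex-analytic sign factor $\alpha$, whereas using the intrinsic $\Pi_z 1$ bypasses this since $\Pi_z$ is automatically analytic in $z$ and positivity-preserving on the real axis.
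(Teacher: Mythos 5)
Your proposal is correct and follows essentially the same route as the paper: reality of $\lambda(\theta)$ via conjugation and uniqueness of the peripheral eigenvalue, and the renormalization $h_{\mathrm{new}}(z)=\Pi_z\mathds{1}=\langle m(z),\mathds{1}\rangle h(z)$, $m_{\mathrm{new}}(z)=\langle m(z),\mathds{1}\rangle^{-1}m(z)$ with positivity obtained from the convergence of $\lambda(\theta)^{-n}P_\theta^n$ (and its dual) to the spectral projection, exactly as in the paper's argument.
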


\begin{proof}
As $P_{\theta}$ is a real operator, we have $P_{\theta} \overline{f} = \overline{P_{\theta}f}$ for all $f \in \mathcal{B}$. So, we have $P_{\theta} \overline{h(\theta)} = \overline{P_{\theta} h(\theta)} = \overline{\lambda(\theta)} \, \overline{h(\theta)}$. Since $\lambda(\theta)$ is the unique eigenvalue of $P_{\theta}$ with maximal modulus, we get $\overline{\lambda(\theta)} =
\lambda(\theta)$, and hence $\lambda(\theta) \in \mathbb{R}$. Since $\lambda(0) = 1$, by a continuity argument, we obtain $\lambda(\theta) > 0$ for small $\theta$. For $z \in \mathbb{C}$
small enough, $\langle m(z), \mathds{1} \rangle \neq 0$. We define $\tilde{h}(z) = \langle m(z), \mathds{1}\rangle h(z)$ and $\tilde{m}(z) = \langle m(z), \mathds{1}\rangle ^{-1} m(z)$. Those new eigenfunctions satisfy obviously the conclusions of the previous proposition. We have just to prove that $\tilde{h}(\theta)$ and $\tilde{m}(\theta)$ are positive for $\theta \in \mathbb{R}$ small enough. By the spectral decomposition of $P_{\theta}$, we see that $\lambda(\theta)^{-n}P_{\theta}^n \mathds{1}$ goes to $\tilde{h}(\theta)$ in $\mathcal{B}$, and hence in $L^1(m)$. We then get $\tilde{h}(\theta) \ge 0$ because $P_{\theta}$ is a positive operator and $\lambda(\theta)$ is positive too. Now, let $\psi(\theta) \in \mathcal{B}^{\star}$ positive such that $\langle \psi(\theta), \tilde{h}(\theta) \rangle = 1$ \footnote{Choose $\psi(\theta) = \alpha(\theta)^{-1}m$, where $\alpha(\theta) = \langle m, \tilde{h}(\theta) \rangle$ is positive, since $\tilde{h}(\theta)$ is a positive element of $L^1(m)$.}. Then, $\lambda(\theta)^{-n}(P_{\theta}^{\star})^n \psi(\theta)$ goes to $\langle \psi(\theta), h(\theta) \rangle m(\theta) = \tilde{m}(\theta)$, which proves that $\tilde{m}(\theta)$ is a positive linear form. \qed
\end{proof}

We denote $\Lambda(\theta) = \log \lambda(\theta)$. We then have

\begin{proposition}

For every $\nu \in \mathcal{M}_{\mathcal{B}}$, there exists $0 < \epsilon_3 < \epsilon_2$ such that for every $\theta
\in \mathbb{R}$ with $|\theta| < \epsilon_3$, we have $$\lim_{n \to \infty} \frac{1}{n} \log \mathbb{E}_{\tilde{\mathbb{P}} \otimes \nu}(e^{\theta S_n})  = \Lambda(\theta)$$
\end{proposition}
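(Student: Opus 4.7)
The plan is to use Lemma \ref{fund} to express the moment generating function as $\mathbb{E}_{\tilde{\mathbb{P}} \otimes \nu}(e^{\theta S_n}) = \int_X P_\theta^n f\, dm$, where $f = d\nu/dm \in \mathcal{B}$, and then exploit the spectral decomposition of $P_\theta$ provided by Lemma \ref{pert}. Substituting item 2 of that lemma yields
$$\mathbb{E}_{\tilde{\mathbb{P}} \otimes \nu}(e^{\theta S_n}) = \lambda(\theta)^n \langle m(\theta), f \rangle \langle m, h(\theta) \rangle + \langle m, Q(\theta)^n f \rangle.$$
Heuristically, the first term is $\lambda(\theta)^n \cdot O(1)$ while the remainder decays like $(1 - \eta_1 - \eta_2)^n$; since $\lambda(\theta) > 1 - \eta_1$ by item 5 of Lemma \ref{pert}, the remainder is exponentially smaller than the principal term, and taking $\frac{1}{n}\log$ produces $\Lambda(\theta) = \log \lambda(\theta)$.

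To carry this out rigorously I would first shrink $\epsilon_2$ down to some $\epsilon_3 > 0$ so that three positivity conditions hold simultaneously for all real $\theta$ with $|\theta| < \epsilon_3$: (i) $\lambda(\theta) > 0$ (already given on $(-\epsilon_2, \epsilon_2)$ by the preceding lemma); (ii) $\langle m, h(\theta) \rangle > 0$, which holds by continuity of $\theta \mapsto h(\theta) \in \mathcal{B}$, the continuous embedding $\mathcal{B} \hookrightarrow L^1(m)$, and the fact that at $\theta = 0$ this quantity equals $\int h\, dm = 1$; and (iii) $\langle m(\theta), f \rangle > 0$, which holds by continuity of $\theta \mapsto m(\theta) \in \mathcal{B}^\star$ and the fact that at $\theta = 0$ this quantity equals $\int f\, dm = 1$. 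These positivity conditions ensure that the principal term $A_n(\theta) := \lambda(\theta)^n \langle m(\theta), f \rangle \langle m, h(\theta) \rangle$ is strictly positive, so that logarithms are well defined.

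Next I would factor out the principal term:
$$\mathbb{E}_{\tilde{\mathbb{P}} \otimes \nu}(e^{\theta S_n}) = A_n(\theta)\left( 1 + \frac{\langle m, Q(\theta)^n f \rangle}{A_n(\theta)} \right).$$
Using the bound $\left| \langle m, Q(\theta)^n f \rangle \right| \le C|\theta|(1 - \eta_1 - \eta_2)^n \|f\|$ from the last sentence of Lemma \ref{pert}, together with $\lambda(\theta)^n \ge (1-\eta_1)^n$ and a uniform positive lower bound on $\langle m(\theta), f \rangle \langle m, h(\theta) \rangle$ on the compact set $|\theta| \le \epsilon_3$ (further shrinking $\epsilon_3$ if needed), we get
$$\left| \frac{\langle m, Q(\theta)^n f \rangle}{A_n(\theta)} \right| \le C'\|f\|\left( \frac{1 - \eta_1 - \eta_2}{1 - \eta_1} \right)^n \xrightarrow[n \to \infty]{} 0.$$
Therefore $1 + \langle m, Q(\theta)^n f \rangle / A_n(\theta) \to 1$, so $\frac{1}{n} \log$ of this factor tends to $0$.

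Finally, taking logarithms gives
$$\frac{1}{n}\log \mathbb{E}_{\tilde{\mathbb{P}} \otimes \nu}(e^{\theta S_n}) = \Lambda(\theta) + \frac{1}{n}\log\bigl( \langle m(\theta), f \rangle \langle m, h(\theta) \rangle \bigr) + \frac{1}{n}\log\left( 1 + \frac{\langle m, Q(\theta)^n f \rangle}{A_n(\theta)} \right),$$
and the last two terms vanish as $n \to \infty$, yielding the claim. The main obstacle is really just the bookkeeping required to guarantee positivity of all factors uniformly in $\theta$ on a neighborhood of $0$; once that is secured, the proof is a direct consequence of the spectral decomposition and the gap between $\lambda(\theta)$ and the spectral radius of $Q(\theta)$.
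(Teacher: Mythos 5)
Your proof is correct and follows essentially the same route as the paper: express the moment generating function via Lemma \ref{fund}, apply the spectral decomposition of $P_\theta$ from Lemma \ref{pert}, use positivity of $\lambda(\theta)$ and the eigendata near $\theta=0$, and observe that the $Q(\theta)^n$ remainder is exponentially negligible compared with $\lambda(\theta)^n$. The only cosmetic difference is that you invoke the explicit bounds $\lambda(\theta)>1-\eta_1$ and $\|Q(\theta)^n\|\le C(1-\eta_1-\eta_2)^n$ where the paper simply cites that the spectral radius of $Q(\theta)$ is strictly smaller than $\lambda(\theta)$.
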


\begin{proof}
Let $f \in \mathcal{B}$ be the density $\frac{d\nu}{dm}$. We have the identity $$\begin{aligned} \mathbb{E}_{\tilde{\mathbb{P}} \otimes \nu}(e^{\theta S_n}) = \langle m, P_{\theta}^n(f)\rangle & = \lambda(\theta)^n \langle m(\theta), f\rangle \, \langle m, h(\theta) \rangle + \langle m, Q(\theta)^n f \rangle \\ & = \lambda(\theta)^n \left( \langle m(\theta), f \rangle \, \langle m, h(\theta) \rangle + \lambda(\theta)^{-n} \langle m, Q(\theta)^n f \rangle \right) \end{aligned}$$ All involved quantities are positive, hence we can write $$\frac{1}{n} \log \mathbb{E}_{\tilde{\mathbb{P}} \otimes \nu}(e^{\theta S_n}) = \log \lambda(\theta) +\frac{1}{n} \log \left( \langle m(\theta), f \rangle \, \langle m, h(\theta) \rangle + \lambda(\theta)^{-n} \langle m, Q(\theta)^n f \rangle \right)$$
Since $\lim_{\theta \to 0} \langle m(\theta), f\rangle \, \langle m, h(\theta) \rangle =
1$ and since the spectral radius of $Q(\theta)$ is strictly less than $\lambda(\theta)$, it's easy to see that for $\theta$ small enough, we have $$\lim_{n \to \infty} \frac{1}{n} \log \left(\langle m(\theta), f \rangle \, \langle m, h(\theta) \rangle + \lambda(\theta)^{-n} \langle m, Q(\theta)^n f \rangle \right) = 0.$$ \qed
\end{proof}

To complete the proof, it suffices to prove that $\Lambda$ is a differentiable function, strictly convex in a neighborhood of $0$, which is indeed the case since $\lambda(.)$ is complex-analytic and we have supposed $\Lambda''(0) = \lambda''(0) = \sigma^2 > 0$. A local version of the Gartner-Ellis theorem (a precise statement can be found e.g. in lemma XIII.2 in \cite{HH}) finishes the proof.

It is interesting to notice that the annealed LDP implies almost immediately a quenched upper bound, with the same rate function for almost every realization :

\begin{proposition}

For every $\nu \in \mathcal{M}_{\mathcal{B}}$, for every small enough $\epsilon > 0$ and for $\tilde{\mathbb{P}}$-almost every $\underline{\omega}$, we have $$\limsup_{n \to \infty} \frac{1}{n} \log \nu (\{ x \in X \, / \, S_n(\underline{\omega}, x) > n \epsilon \}) \le - c(\epsilon)$$

\end{proposition}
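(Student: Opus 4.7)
The plan is to deduce this almost directly from Theorem~\ref{LDP_spec} by a Borel--Cantelli argument; since the author says the conclusion follows ``almost immediately'', the work is simply to arrange the quantifiers correctly.

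First, fix $\nu \in \mathcal{M}_{\mathcal{B}}$ and a small $\epsilon>0$ in the range where Theorem~\ref{LDP_spec} applies. Define the measurable function
\[
A_n(\underline{\omega}) \;=\; \nu\bigl(\{x \in X \,/\, S_n(\underline{\omega},x) > n\epsilon\}\bigr)
\]
on $\tilde{\Omega}$. By Fubini's theorem, $\mathbb{E}_{\tilde{\mathbb{P}}}[A_n] = \tilde{\mathbb{P}}\otimes \nu(S_n > n\epsilon)$. Applying Theorem~\ref{LDP_spec}, for any $\delta > 0$ there exists $N_\delta$ such that $\mathbb{E}_{\tilde{\mathbb{P}}}[A_n] \le e^{-n(c(\epsilon)-\delta)}$ for all $n \ge N_\delta$.

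Next, I would apply Markov's inequality with threshold $e^{-n(c(\epsilon)-2\delta)}$: for $n\ge N_\delta$,
\[
\tilde{\mathbb{P}}\bigl( A_n > e^{-n(c(\epsilon)-2\delta)} \bigr) \;\le\; e^{n(c(\epsilon)-2\delta)}\, \mathbb{E}_{\tilde{\mathbb{P}}}[A_n] \;\le\; e^{-n\delta}.
\]
This is summable in $n$, so by the Borel--Cantelli lemma there is a set $\tilde{\Omega}_\delta \subset \tilde{\Omega}$ of full $\tilde{\mathbb{P}}$-measure such that, for every $\underline{\omega}\in \tilde{\Omega}_\delta$, one has $A_n(\underline{\omega}) \le e^{-n(c(\epsilon)-2\delta)}$ for all sufficiently large $n$. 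In particular
\[
\limsup_{n\to\infty} \frac{1}{n}\log A_n(\underline{\omega}) \;\le\; -\bigl(c(\epsilon)-2\delta\bigr).
\]

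Finally, to remove the $\delta$, intersect the sets along a sequence $\delta_k = 1/k \to 0$: the set $\tilde{\Omega}_0 := \bigcap_{k\ge 1} \tilde{\Omega}_{1/k}$ still has full $\tilde{\mathbb{P}}$-measure, and on it the above bound holds for every $k$, yielding $\limsup_{n} \frac{1}{n}\log A_n(\underline{\omega}) \le -c(\epsilon)$. Since there is no real obstacle here beyond bookkeeping, this completes the argument; the only subtlety worth noting is that the full-measure set $\tilde{\Omega}_0$ depends on both $\nu$ and $\epsilon$, which is consistent with the quantifier order in the statement.
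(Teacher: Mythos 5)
Your argument is correct and follows essentially the same route as the paper's proof: use the annealed LDP to bound $\mathbb{E}_{\tilde{\mathbb{P}}}[A_n]$, apply Markov's inequality at a slightly weaker exponential threshold, conclude by Borel--Cantelli, and then remove the slack parameter along a countable sequence (the paper writes the slack multiplicatively as $(1-\gamma)c(\epsilon)$ rather than additively as $c(\epsilon)-2\delta$, which is only cosmetic).
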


\begin{proof}
Let $\epsilon > 0$ small enough such that the annealed LDP holds. Let $0 < \gamma < 1$ and define $$A_n = \{ \underline{\omega} \in \tilde{\Omega} \, / \, \nu( \{ x \in X \, / \, S_n(\underline{\omega}, x) > n \epsilon \}) \ge e^{-n(1 -  \gamma) c(\epsilon)} \}.$$
By the annealed LDP, we have $\tilde{\mathbb{P}} \otimes \nu (S_n > n\epsilon) \le C e^{-n (1 - \frac{\gamma}{2}) c(\epsilon)}$ for some $C = C(\gamma, \epsilon)$, and hence Markov inequality yields $$\tilde{\mathbb{P}}(A_n) \le e^{n(1- \gamma)c(\epsilon)} \tilde{\mathbb{P}} \otimes \nu (S_n > n\epsilon) \le C e^{-n \frac{\gamma}{2} c(\epsilon)}.$$ By the Borel-Cantelli lemma, we have that $\tilde{\mathbb{P}}$-almost every $\underline{\omega}$ lies in finitely many $A_n$ whence $\limsup_{n \to \infty} \frac{1}{n} \log \nu (\{ x \in X \, / \, S_n(\underline{\omega}, x) > n \epsilon \}) \le -(1 - \gamma) c(\epsilon)$ for $\tilde{\mathbb{P}}$-almost every $\underline{\omega}$. As $\gamma$ can be a rational number arbitrarily close to $0$, we get $\limsup_{n \to \infty} \frac{1}{n} \log \nu( \{ x \in X \, / \, S_n(\underline{\omega}, x) > n \epsilon \}) \le - c(\epsilon)$  for $\tilde{\mathbb{P}}$-almost every $\underline{\omega}$.  \qed
\end{proof}

We can also prove a local limit theorem.

\begin{definition}
We will say that $\varphi$ is aperiodic if for all $t \neq 0$, the spectral radius of $P_{it}$ is strictly less than $1$.
\end{definition}

\begin{theorem}[Local Limit Theorem]
If $\sigma^2> 0$ and $\varphi$ is aperiodic, then, for all $\nu \in \mathcal{M}_{\mathcal{B}}$ and all bounded interval $I \subset \mathbb{R}$, $$\lim_{n \to \infty} \sup_{s \in  \mathbb{R}} \left|\sigma \sqrt{n} \, \tilde{\mathbb{P}} \otimes \nu (s + S_n \in I) - \frac{1}{\sqrt{2 \pi}} e^{-\frac{s^2}{2 n \sigma^2 }} |I| \right| = 0.$$

\end{theorem}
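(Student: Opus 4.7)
The plan is to follow the classical Nagaev--Guivarc'h Fourier-analytic approach: express the quantity $\sigma\sqrt{n}\,\tilde{\mathbb{P}}\otimes\nu(s+S_n\in I)$ via the characteristic function of $S_n$, which by Lemma~\ref{fund} equals $\int P_{it}^n f\,dm$ with $f=d\nu/dm$, and then exploit the spectral description of $P_{it}$ in a neighborhood of $t=0$ (Lemma~\ref{pert}) together with aperiodicity away from $0$.

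First I would reduce the statement to the same statement with $\mathbf{1}_I$ replaced by a test function $g$ whose Fourier transform $\hat g$ has compact support. This reduction is classical: given $\epsilon>0$, sandwich $\mathbf{1}_I$ between two such $g_\pm$ with $\int(g_+-g_-)<\epsilon$, using for instance convolutions with a Fej\'er-type kernel whose transform is supported in a bounded interval $[-A,A]$; if one can prove the LLT with $g$ in place of $\mathbf{1}_I$, the uniformity in $s$ of the sandwich and the arbitrariness of $\epsilon$ yield the theorem. So from now on fix $g\in L^1(\mathbb{R})$ with $\hat g$ continuous, bounded, and supported in $[-A,A]$.

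Next, using Fourier inversion and Lemma~\ref{fund}, one writes
\[
\sigma\sqrt{n}\,\mathbb{E}_{\tilde{\mathbb{P}}\otimes\nu}\bigl(g(s+S_n)\bigr)
= \frac{\sigma\sqrt{n}}{2\pi}\int_{-A}^{A} \hat g(t)\,e^{its}\,\langle m,P_{it}^n f\rangle\,dt,
\]
and, since the Fourier transform of the centered Gaussian density with variance $n\sigma^2$ is $e^{-n\sigma^2 t^2/2}$,
\[
\frac{1}{\sqrt{2\pi}}\,e^{-s^2/(2n\sigma^2)}\int g
= \frac{\sigma\sqrt{n}\,\hat g(0)}{2\pi}\int_{\mathbb{R}} e^{-n\sigma^2 t^2/2}\,e^{its}\,dt.
\]
After the change of variables $t=u/(\sigma\sqrt{n})$, the problem reduces to showing
\[
\int_{\mathbb{R}}\Bigl|\hat g\bigl(u/(\sigma\sqrt{n})\bigr)\,\langle m,P_{iu/(\sigma\sqrt{n})}^n f\rangle - \hat g(0)\,e^{-u^2/2}\Bigr|\,du\;\longrightarrow\;0,
\]
which controls the supremum over $s$ since $|e^{ius/(\sigma\sqrt{n})}|=1$. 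I would split the $u$-integral into three ranges: (i) $|u|\le M$ (CLT window), (ii) $M\le |u|\le \delta\sigma\sqrt{n}$ (perturbative window), (iii) $\delta\sigma\sqrt{n}\le |u|\le A\sigma\sqrt{n}$ (aperiodicity window).

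In range (i), the estimate of Lemma~\ref{speed_conv_carac} shows that $\langle m,P_{iu/(\sigma\sqrt{n})}^n f\rangle\to e^{-u^2/2}$ pointwise, and continuity of $\hat g$ gives $\hat g(u/(\sigma\sqrt n))\to\hat g(0)$; together with the Gaussian-type domination that also follows from Lemma~\ref{speed_conv_carac}, dominated convergence handles this part after first sending $n\to\infty$ and then $M\to\infty$. In range (ii), the Taylor expansion $\lambda(it)=1-\sigma^2 t^2/2+O(|t|^3)$ from the proof of Lemma~\ref{speed_conv_carac} gives $|\lambda(it)|\le e^{-c t^2}$ for some $c>0$ and $|t|\le\delta$ with $\delta$ small enough; combined with the remaining term $Q(it)^n$ which has norm $\le C(1-\eta_1)^n$ by Lemma~\ref{pert}, this yields $|\langle m,P_{it}^n f\rangle|\le C e^{-cnt^2}\|f\|$, so the integrand is dominated by $2\|\hat g\|_\infty\|f\|\cdot e^{-cu^2/\sigma^2}$ outside $[-M,M]$, which is arbitrarily small for $M$ large. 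Range (iii) is where the aperiodicity assumption enters: for each $t\neq 0$ the spectral radius $r(P_{it})<1$, and since $t\mapsto P_{it}$ is continuous in the operator norm on $\mathcal{B}$ (because $\|P_{it}-P_{is}\|\le\|P\|\cdot\|e^{it\varphi}-e^{is\varphi}\|_0\le C\|\varphi\|_0 |t-s|$ by the hypothesis on $\mathcal{B}_0$), a standard compactness plus holomorphic functional calculus argument produces constants $C>0$ and $\rho<1$ such that $\|P_{it}^n\|\le C\rho^n$ uniformly for $t$ in the compact set $\{\delta\le |t|\le A\}$; hence the contribution of range (iii) is at most $C\|\hat g\|_\infty\|f\|\cdot A\sigma\sqrt{n}\,\rho^n\to 0$.

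The main obstacle is precisely the uniform spectral bound in range (iii): aperiodicity gives only pointwise information $r(P_{it})<1$, and one must promote this to a uniform operator-norm decay on a compact set away from the origin. This promotion relies on the continuity of $t\mapsto P_{it}$ in $\mathcal{L}(\mathcal{B})$, on upper semicontinuity of the spectral radius, and on encircling the spectrum of $P_{it}$ by a contour that can be chosen locally uniform in $t$ so that the resolvent stays bounded, whence $P_{it}^n$ decays exponentially at a rate controllable uniformly on the compact set. Once this is in hand, the three estimates combine to give the desired limit uniformly in $s$, completing the proof after the sandwich reduction from $g$ back to $\mathbf{1}_I$.
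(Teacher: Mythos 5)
Your proposal is correct and follows essentially the same route as the paper: Breiman's Fourier-analytic argument, reduction to test functions with compactly supported Fourier transform, Lemma \ref{fund} plus the spectral perturbation estimates near $t=0$, and the promotion of the pointwise aperiodicity bound $r(P_{it})<1$ to a uniform bound $\|P_{it}^n\|\le C\theta^n$ on the compact annulus via continuity of $t\mapsto P_{it}$ and upper semicontinuity of the spectral radius. The only differences are organizational (a three-range split of the integral instead of the paper's five terms, and an explicit sandwich of $\mathbf{1}_I$ where the paper invokes a density argument), so no further changes are needed.
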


\begin{proof}
We follow the proof given by Breiman \cite{Brei} in the iid case. See also Rousseau-Egele \cite{R-E} for a proof in a dynamical context. By a density argument, it is sufficient to prove that, uniformly in $s \in \mathbb{R}$, $\left| \sigma \sqrt{n} \, \mathbb{E}_{\tilde{\mathbb{P}} \otimes \nu}(g(S_n + s)) - \frac{1}{\sqrt{2\pi}} e^{-\frac{s^2}{2 n \sigma^2}} \int_{\mathbb{R}} g(u)du \right|$ goes to $0$ as $n \to \infty$, for all $g \in L^1(\mathbb{R})$ for which the Fourier transform $\hat{g}$ is continuous with compact support.
Using Fourier's inversion formula, we first write $$\sigma \sqrt{n} \mathbb{E}_{\tilde{\mathbb{P}} \otimes \nu}(g(S_n + s)) = \frac{\sigma \sqrt{n}}{2\pi} \int_{\mathbb{R}} e^{its} \hat{g}(t) \left( \int_X P_{it}^n(f) \, dm \right) dt.$$
Let $\delta > 0$ be such that the support of $\hat{g}$ is included in $[-\delta, + \delta]$, and, remembering that $\lambda(it) = 1 - \frac{\sigma^2 t^2}{2} + o(t^2)$ and $\langle m(it), f \rangle \langle m, h(it) \rangle = 1 + \mathcal{O}(|t|)$, choose $0 < \tilde{\delta} < \delta$ small enough in such a way that $|\lambda(it)| \le 1 - \frac{\sigma^2 t^2}{4} \le e^{- \frac{t^2 \sigma^2}{4}}$ and $\left|\langle m(it), f \rangle \langle m, h(it) \rangle - 1 \right| \le C |t|$ for $|t| < \tilde{\delta}$.
Using $$\frac{1}{\sqrt{2\pi}} e^{-\frac{s^2}{2n \sigma^2}}\int_{\mathbb{R}} g(u)du = \frac{\hat{g}(0) \sigma}{2 \pi} \int_{\mathbb{R}} e^{\frac{its}{\sqrt{n}}} e^{- \frac{\sigma^2 t^2}{2}} dt$$ and $$\int_X P_{it}^n (f) \, dm = \lambda(it)^n \langle m, h(it) \rangle \langle m(it), f \rangle + \langle m, Q(it)^n f \rangle$$ for $|t| < \tilde{\delta}$, we can write

$$ \begin{aligned} &\sigma \sqrt{n} \, \mathbb{E}_{\tilde{\mathbb{P}} \otimes \nu}(g(S_n + s))& &-& &\frac{1}{\sqrt{2\pi}} e^{-\frac{s^2}{2 n \sigma^2}} \int_{\mathbb{R}} g(u)du& \\
&& &=& &\frac{\sigma}{2 \pi} \left( \int_{|t| < \tilde{\delta} \sqrt{n}} e^{\frac{its}{\sqrt{n}}} \left( \hat{g}(\frac{t}{\sqrt{n}}) \lambda(\frac{it}{\sqrt{n}})^n - \hat{g}(0) e^{- \frac{\sigma^2 t^2}{2}} \right) dt \right. & \\
&& &+& &\int_{|t| < \tilde{\delta}\sqrt{n}} e^{\frac{its}{\sqrt{n}}} \hat{g}(\frac{t}{\sqrt{n}}) \lambda(\frac{it}{\sqrt{n}})^n \left( \langle m(\frac{it}{\sqrt{n}}), f \rangle \langle m, h(\frac{it}{\sqrt{n}}) \rangle - 1 \right) dt& \\
&& &+& &\sqrt{n} \int_{|t| < \tilde{\delta}} e^{its} \hat{g}(t) \langle m, Q(it)^n f \rangle dt& \\
&& &+& & \left. \sqrt{n} \int_{\tilde{\delta} \le |t| \le \delta} e^{its} \hat{g}(t) \langle m, P_{it}^n f \rangle dt   - \hat{g}(0) \int_{|t| \ge \tilde{\delta} \sqrt{n}} e^{\frac{its}{\sqrt{n}}} e^{-\frac{\sigma^2 t^2}{2}} dt \right) & \\ && &=&  &\frac{\sigma}{2 \pi} \left( A_n^{(1)}(s) + A_n^{(2)}(s) + A_n^{(3)}(s) + A_n^{(4)}(s) + A_n^{(5)}(s) \right).&
\end{aligned}$$

One has $\left| A_n^{(1)} (s) \right| \le \int_{|t| < \tilde{\delta} \sqrt{n}} \left| \hat{g}(\frac{t}{\sqrt{n}}) \lambda(\frac{it}{\sqrt{n}})^n - \hat{g}(0) e^{- \frac{\sigma^2 t^2}{2}}\right| dt$, so $\sup_{s \in \mathbb{R}} \left| A_n^{(1)} (s) \right| \to 0$ by dominated convergence, since $\left| \hat{g}(\frac{t}{\sqrt{n}}) \lambda(\frac{it}{\sqrt{n}})^n - \hat{g}(0) e^{- \frac{\sigma^2 t^2}{2}} \right| \le \| \hat{g} \|_{\rm sup} \left( e^{- \frac{\sigma^2 t^2}{4}} + e^{- \frac{\sigma^2 t^2}{2}}\right)$ is integrable on $\mathbb{R}$ and $\hat{g}(\frac{t}{\sqrt{n}}) \lambda(\frac{it}{\sqrt{n}})^n \to \hat{g}(0) e^{- \frac{\sigma^2 t^2}{2}}$.

For the second term, we can bound it by $\frac{C \| \hat{g}  \|_{\rm sup}}{\sqrt{n}} \int_{|t| < \tilde{\delta} \sqrt{n}}|t| e^{- \frac{\sigma^2 t^2}{4}} dt = \mathcal{O}\left( \frac{1}{\sqrt{n}} \right)$ and so $\sup_{s \in \mathbb{R}} \left| A_n^{(2)} (s) \right| \to 0$

The third term is bounded by $C \sqrt{n} \| \hat{g}  \|_{\rm sup} \rho^n$, and so $\sup_{s \in \mathbb{R}} \left| A_n^{(3)} (s) \right| \to 0$.

By dominated convergence, we have clearly $\sup_{s \in \mathbb{R}} \left| A_n^{(5)} (s) \right| \to 0$, so it remains to deal with the fourth term. This is where the aperiodicity assumption plays a role. Denoting by $r(P_{it})$ the spectral radius of the operator $P_{it}$, we know that the u.s.c. function  $t \mapsto r(P_{it})$ reaches its maximum on the compact set $\{ \tilde{\delta} \le |t| \le \delta \}$, which is then $< 1$ by assumption. Since the set $\{P_{it} \}_{\tilde{\delta} \le |t| \le \delta}$ is bounded, there exists $C$ and $\theta < 1$ such that $\| P_{it}^n \| \le C \theta^n$ for all $\tilde{\delta} \le |t| \le \delta$ and all $n \ge 0$. Then one has $\sup_{s \in \mathbb{R}} \left| A_n^{(4)}(s) \right| \le C \sqrt{n} \theta^n \| \hat{g} \|_{\rm sup} \| m\|  \|f \| \to 0$, which concludes the proof. \qed
\end{proof}

We give a concrete criterion to check the aperiodicity assumption :

\begin{proposition}

Assume that the stationary measure $\mu$ is equivalent to $m$, and that the spectral radius (resp. the essential spectral radius) of $P_{it}$ is less (resp. strictly less) than $1$ for all $t \in \mathbb{R}$. If $\varphi$ is not aperiodic, then there exists $t  \neq 0$, $\lambda \in \mathbb{C}$ with $|\lambda| = 1$ and a measurable function $g : X \to \mathbb{C}$ such that $gh  \in \mathcal{B}$ and $\lambda g(T_{\omega} x) = e^{it \varphi(x)} g(x)$ for $m$-ae $x$ and $\mathbb{P}$-ae $\omega$.

\end{proposition}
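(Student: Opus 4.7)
The plan is to produce a unimodular eigenfunction of $P_{it}$ for some $t \neq 0$ and then extract the cohomological equation by tracking the equality cases in a pointwise triangle inequality. First, by the assumption that $r(P_{it}) \leq 1$ for every $t$, non-aperiodicity of $\varphi$ gives some $t \neq 0$ with $r(P_{it}) = 1$; combined with the hypothesis that the essential spectral radius of $P_{it}$ is strictly less than $1$, this makes $P_{it}$ quasi-compact and forces the existence of an eigenvalue $\lambda \in \mathbb{C}$ with $|\lambda| = 1$ and a nonzero eigenvector $f \in \mathcal{B}$ satisfying $P(e^{it\varphi}f) = \lambda f$.

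Next I would identify $|f|$ with a multiple of the invariant density $h$. Each $P_\omega$ is a positive operator with an integral-kernel representation (a weighted sum over preimages in every example of Section \ref{framework}), hence $|P_\omega(e^{it\varphi}f)| \leq P_\omega|f|$ pointwise a.e.; averaging over $\omega$ and applying the triangle inequality to the $\omega$-integral gives
\begin{equation*}
|f| \,=\, |\lambda f| \,=\, |P_{it}f| \,\leq\, P|f| \qquad m\text{-a.e.}
\end{equation*}
Since $\int P|f|\,dm = \int |f|\,dm$, the inequality must be an a.e.\ equality, so $P|f| = |f|$. The simplicity of $1$ as an eigenvalue of $P$ then yields $|f| = c\,h$ for some constant $c > 0$, which we normalize to $c = 1$. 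The hypothesis $\mu \sim m$ ensures $h > 0$ $m$-a.e., so $g := f/|f|$, extended arbitrarily on the null set $\{h = 0\}$, is a well-defined measurable unimodular function with $gh = f \in \mathcal{B}$.

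The final step, producing the cohomological equation, is the main difficulty. The equality $|P_{it}f|(x) = P|f|(x)$ collapses two successive triangle inequalities. The outer one, $\bigl|\int_\Omega P_\omega(e^{it\varphi}f)(x)\,d\mathbb{P}(\omega)\bigr| = \int_\Omega |P_\omega(e^{it\varphi}f)(x)|\,d\mathbb{P}(\omega)$, forces the integrand to have argument independent of $\omega$, necessarily equal to $\arg(\lambda f(x))$ for a.e.\ $x$. The inner one, $|P_\omega(e^{it\varphi}f)(x)| = P_\omega|f|(x)$, applied to the positive kernel of $P_\omega$ supported on the preimages of $x$, forces $\arg(e^{it\varphi(y)}f(y))$ to be constant in $y$ on $T_\omega^{-1}(x)$, again equal to $\arg(\lambda f(x))$. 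Setting $x = T_\omega y$ and combining the two conditions, then applying Fubini to interchange the quantifiers on $\omega$ and $y$, one obtains
\begin{equation*}
\lambda\,g(T_\omega y) \,=\, e^{it\varphi(y)} g(y) \qquad \text{for } m\text{-a.e.\ } y \text{ and } \mathbb{P}\text{-a.e.\ } \omega,
\end{equation*}
as required. The delicate point is thus the passage from equality of integrals to pointwise alignment of arguments on fibers, which relies on the explicit kernel representation of each individual $P_\omega$ rather than on the abstract framework alone.
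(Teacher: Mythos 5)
Your first two steps are correct and in fact give slightly more than the statement asks: from $|f|=|P(e^{it\varphi}f)|\le P|f|$, preservation of $\int\cdot\,dm$, the lattice assumption and the simplicity of the eigenvalue $1$ you get $|f|=ch$, so that $g=f/|f|$ is unimodular and $gh=f\in\mathcal{B}$. The paper does not need this: it simply sets $g=f/h$, which is measurable and satisfies $gh=f$ by the hypothesis $\mu\sim m$, and unimodularity of $g$ is not part of the conclusion.

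The genuine gap is in your last step. The proposition is stated in the abstract framework of Section \ref{framework}, where the maps $T_\omega$ are only assumed non-singular; at that level the operators $P_\omega$ have no pointwise ``sum over preimages'' kernel, so an argument which, as you yourself concede, ``relies on the explicit kernel representation of each individual $P_\omega$'' does not prove the statement as formulated. Even in the concrete example classes where such a kernel exists, the passage from the $m$-a.e.\ identity $|P_{it}f|=P|f|$ (an identity between $L^1$-classes) to argument alignment \emph{at every point of the fibre} $T_\omega^{-1}(x)$ is not justified as written: fibres are null sets, so one must fix versions of $f$ and $\varphi$ for which the kernel formula holds pointwise for a.e.\ $x$, check that the choice of version is harmless (the branches must send null sets to null sets), and only then pull the exceptional set of $x$'s back by $T_\omega$ using non-singularity; none of this is carried out. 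The paper's proof sidesteps the issue: from $P(\bar\lambda e^{it\varphi}gh)=gh$ it lifts the relation to the skew product via Lemma \ref{transfer_skew} and invokes Proposition 1.1 of Morita \cite{Mor3}, a purely measure-theoretic statement valid for arbitrary non-singular transformations. If you want a self-contained repair in the abstract setting, replace the fibrewise analysis by duality: with $q=f/|f|$ (defined a.e.\ since $|f|=ch>0$ a.e.), one has $\int_X|f|\,dm=\int_X \bar q\,P(\bar\lambda e^{it\varphi}f)\,dm=\int_X\int_\Omega \bar\lambda\, e^{it\varphi(x)}f(x)\,\overline{q(T_\omega x)}\,d\mathbb{P}(\omega)\,dm(x)$; the integrand has real part at most $|f(x)|$, so equality of the two sides forces $\bar\lambda\, e^{it\varphi(x)}f(x)\,\overline{q(T_\omega x)}=|f(x)|$ for $\mathbb{P}\otimes m$-a.e.\ $(\omega,x)$, which is exactly $\lambda\, q(T_\omega x)=e^{it\varphi(x)}q(x)$ a.e., with $qh=f/c\in\mathcal{B}$. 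This avoids kernels, versions and fibrewise statements altogether.
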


\begin{proof}
Suppose that the spectral radius of $P_{it}$ is greater or equal than $1$ for some $t \neq 0$. By the assumptions on the spectral radius, this implies that there is an eigenvalue $\lambda$ of $P_{it}$ satisfying $| \lambda | =1$. Let $f \in \mathcal{B}$ a corresponding eigenvector, and define $g = \frac{f}{h}$. This definition makes sense $m$-ae, by the assumption on $\mu$. We then have $P(\phi g h) = gh$, where $\phi = \bar{\lambda} e^{it \varphi}$. We then lift this relation to the skew-product : by Lemma \ref{transfer_skew}, we have $P_S(\phi_{\pi} g_{\pi} h_{\pi}) = g_{\pi} h_{\pi}$, where $P_S$ is the transfer operator for the skew-product system, defined w.r.t. the measure $\tilde{\mathbb{P}} \otimes m$. See Section \ref{clt_martingale} for the notations. By Proposition 1.1 in Morita \cite{Mor3}, we deduce that $g_{\pi} \circ S = \phi_{\pi} g_{\pi}$, $\tilde{\mathbb{P}} \otimes m$- ae. We conclude the proof by writing explicitly this relation. \qed
\end{proof}

\begin{remark}

\begin{enumerate}

\item The assumptions on the spectral radius of $P_{it}$ are usually proved by mean of a Lasota-Yorke inequality for each $P_{it}$, which usually follow in the same way we prove a Lasota-Yorke inequality for the transfer operator $P$. See the works of Rousseau-Egele \cite{R-E}, Morita \cite{Mor3}, Broise \cite{Bro} or Aaronson-Denker-Sarig-Zweimuller \cite{ADSZ} for one-dimensional deterministic examples.

\item The previous Proposition shows that if $\varphi$ is not aperiodic for the random system, then it is not aperiodic for almost each deterministic system $T_{ \omega}$ in the usual sense, and that almost all aperiodicity equations share a common regular solution $g$. For instance, if the set $\Omega$ is finite and if we know that $\varphi$ is aperiodic for one map $T_{\omega}$, then it is aperiodic for the random system. This can be checked using known techniques, see \cite{AD, ADSZ, Bro} among many others for more details.

\end{enumerate}

\end{remark}

We conclude this section with an annealed vector-valued almost sure invariance principle. First recall the definition.

\begin{definition}
For $\lambda \in (0, \frac{1}{2}]$, and $\Sigma^2$ a (possibly degenerate) symmetric semi-positive-definite $d \times d$ matrix, we say that an $\mathbb{R}^d$-valued process $\left(X_n \right)_n$ satisfies an almost sure invariance principle (ASIP) with error exponent $\lambda$ and limiting covariance $\Sigma^2$ if there exist, on another probability space, two processes $\left( Y_n \right)_n$ and $\left( Z_n \right)_n$ such that :

\begin{enumerate}

\item the processes $\left(X_n \right)_{n}$ and $\left( Y_n \right)_n$ have the same distribution;

\item the random variables $Z_n$ are independent and distributed as $\mathcal{N}(0, \Sigma^2)$;

\item almost surely, $ \left| \sum_{k=0}^{n-1} Y_k - \sum_{k=0}^{n-1} Z_k \right| = o(n^{\lambda})$.

\end{enumerate}

\end{definition}

The ASIP has a lot of consequences, such as a functional central limit theorem, a law of the iterated logarithm, etc ... See Melbourne and Nicol \cite{MN} and references therein for more details.

Let $\varphi : X \to \mathbb{R}^d$ be a bounded vector-valued observable such that each component $\varphi_j : X \to \mathbb{R}$, $j=1, \ldots, d$, belongs to $\mathcal{B}_0$, with $\int_X \varphi_j \, d\mu = 0$.
Define as before $X_k(\underline{\omega}, x) = \varphi(T_{\underline{\omega}}^k x)$.

\begin{theorem}

The covariance matrix $\frac{1}{n}{\rm cov} \left(\sum_{k=0}^{n-1} X_k \right)$ converges to a matrix $\Sigma^2$ and the process $(X_n)_n$, defined on the probability space $(\tilde{\Omega} \times X, \tilde{\mathbb{P}} \otimes \mu)$, satisfies an ASIP with limiting covariance $\Sigma^2$, for any error exponent $\lambda > \frac{1}{4}$.

\end{theorem}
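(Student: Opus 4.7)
The plan is to apply Gou\"ezel's spectral criterion for the almost sure invariance principle (from his paper on ASIP for dynamical systems by spectral methods), which converts suitable estimates on multivariate characteristic functions of block increments into an almost sure coupling with a Brownian motion. The spectral framework already developed in this section for the scalar Laplace operators extends verbatim to the vector-valued setting, so the work reduces to extracting the block estimates Gou\"ezel requires.

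First I would introduce the vector-valued Laplace operators $P_t : \mathcal{B} \to \mathcal{B}$, $t \in \mathbb{R}^d$, by $P_t f = P(e^{i \langle t, \varphi \rangle} f)$. Because each $\varphi_j$ lies in $\mathcal{B}_0$ and the product assumption $\|fg\| \le C\|f\|_0 \|g\|$ holds, the family $\{P_t\}$ extends holomorphically to a complex neighbourhood of $0$ in $\mathbb{C}^d$, reducing to $P$ at $t = 0$. Standard perturbation theory, exactly as in Lemma \ref{pert}, gives on a small polydisk a leading simple eigenvalue $\lambda(t)$, analytic eigendata $h(t), m(t)$ with $h(0)=h$, $m(0)=m$, a complementary operator $Q(t)$ with $\|Q(t)^n\| \le C \kappa^n$ for some $\kappa < 1$ uniform in $t$, and the Taylor expansion $\lambda(t) = 1 - \tfrac{1}{2} t^T \Sigma^2 t + O(|t|^3)$. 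Computing the Hessian as in the proof of Proposition \ref{green_kubo} identifies the matrix $\Sigma^2$ as
$$\Sigma^2_{ij} = \int_X \varphi_i \varphi_j \, d\mu + \sum_{n=1}^{\infty} \int_X \bigl( \varphi_i \, U^n \varphi_j + \varphi_j \, U^n \varphi_i \bigr) \, d\mu,$$
with the series converging by Proposition \ref{decay}. The same argument shows that $\frac{1}{n}\mathrm{cov}\bigl(\sum_{k=0}^{n-1} X_k\bigr) \to \Sigma^2$.

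The core step is to verify Gou\"ezel's hypothesis, namely that for any integer $k$, any $0 = n_0 \le n_1 \le \ldots \le n_k$, and any $t_1, \ldots, t_k$ in a fixed small neighbourhood of $0$, one has
$$\Bigl| \mathbb{E}_{\tilde{\mathbb{P}} \otimes \mu} \bigl( e^{i \sum_{j=1}^k \langle t_j, S_{n_j} - S_{n_{j-1}} \rangle} \bigr) - \prod_{j=1}^{k} e^{-\frac{1}{2}(n_j - n_{j-1}) t_j^T \Sigma^2 t_j} \Bigr| \le C \sum_{j=1}^k |t_j| \, \kappa^{\min_j (n_j - n_{j-1})},$$
or one of the closely related forms used in Gou\"ezel's theorem. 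This is where the product structure of $\tilde{\mathbb{P}}$ is essential: the increment $S_{n_j} - S_{n_{j-1}}$ depends only on the coordinates $(\omega_{n_{j-1}+1}, \ldots, \omega_{n_j})$, which are independent across blocks. An induction extending Lemma \ref{fund} then rewrites the characteristic function as
$$\int_X P_{t_k}^{n_k - n_{k-1}} \cdots P_{t_1}^{n_1 - n_0}\, h \, dm.$$
Inserting the spectral decomposition $P_{t_j}^{n_j - n_{j-1}} = \lambda(t_j)^{n_j - n_{j-1}} h(t_j) \langle m(t_j), \cdot \rangle + Q(t_j)^{n_j - n_{j-1}}$ into each factor and expanding the product yields one principal contribution, which after the estimate $\lambda(t_j)^{n_j - n_{j-1}} = e^{-\frac{1}{2}(n_j - n_{j-1}) t_j^T \Sigma^2 t_j}(1 + O(|t_j|))$ matches the target, plus a bounded number of remainder terms, each of which contains at least one factor $Q(t_j)^{n_j - n_{j-1}}$ and is therefore controlled by $\kappa^{\min_j(n_j - n_{j-1})}$ thanks to the uniform hyperbolic bound on $Q(t)$.

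The main difficulty, and the reason one invokes Gou\"ezel's theorem as a black box, is that turning these multi-block spectral estimates into an actual Skorokhod-type coupling with independent Gaussians producing error $o(n^{1/4 + \varepsilon})$ requires the delicate blocking/approximation scheme developed in that paper; this part is entirely abstract once the spectral input above is in hand. The only subtlety one must keep in mind is the possibly degenerate case $\det \Sigma^2 = 0$, which is handled by Gou\"ezel's statement since it permits semi-definite covariances (the Gaussians $Z_n$ are then degenerate along $\ker \Sigma^2$). Combining all the ingredients yields the ASIP with any error exponent $\lambda > 1/4$, as claimed.
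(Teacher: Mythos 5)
Your overall route is the paper's: both proofs introduce the family $\mathcal{L}_t f = P(e^{i\langle t, \varphi\rangle}f)$ (your $P_t$), observe via the product structure of $\tilde{\mathbb{P}}$ and an extension of Lemma \ref{fund} that this family codes the characteristic functions of the increments of $(X_n)$, and then invoke Gou\"ezel \cite{G10}. The divergence is in what you propose to verify, and there your ``core step'' has real problems. The inequality you state is not Gou\"ezel's hypothesis: his characteristic-function condition is a decorrelation estimate between two groups of blocks separated by a time gap, and it tolerates constants growing with the block data, whereas you claim a comparison with the Gaussian product that is uniform in the number of blocks $k$. Moreover your sketch does not deliver even that: expanding $\prod_j\bigl(\lambda(t_j)^{m_j}\Pi_{t_j}+Q(t_j)^{m_j}\bigr)$ produces $2^k$ remainder terms, not ``a bounded number'', and bounding their sum by $\kappa^{\min_j m_j}$ with a $k$-independent constant is not justified (the natural estimate carries a factor $C^k$); the approximation $\lambda(t)^{m}=e^{-\frac{1}{2}m\,t^T\Sigma^2 t}\bigl(1+O(|t|)\bigr)$ is false as a multiplicative statement uniform in the block length $m$ (the correction is of size $e^{O(m|t|^3)}$), and the additive version needs $|\lambda(t)|\le e^{-c|t|^2}$, which is unavailable precisely in the degenerate case $\det\Sigma^2=0$ that you want to allow. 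So the central estimate, as stated and as sketched, is unproven.

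The fix is to delete that detour, and it is exactly what the paper does: Gou\"ezel's spectral theorem requires only (I1) -- the coding property together with a spectral gap for $\mathcal{L}_0=P$ -- and (I2), a uniform bound on $\mathcal{L}_t^n$ for $t$ near $0$, which by Proposition 2.3 of \cite{G10} follows from continuity (here real-analyticity) of $t\mapsto\mathcal{L}_t$ at $t=0$. You already have all of these ingredients from your construction of $P_t$ and the perturbation step analogous to Lemma \ref{pert}, so feeding them into \cite{G10} directly yields both the convergence of $\frac{1}{n}{\rm cov}\bigl(\sum_{k=0}^{n-1}X_k\bigr)$ to $\Sigma^2$ and the ASIP with any error exponent $\lambda>\frac{1}{4}$; your Green--Kubo identification of $\Sigma^2$ is a correct supplement but not needed for the statement.
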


\begin{proof}
We will apply results from Gou\"{e}zel \cite{G10}. Namely, we construct a family of operators $(\mathcal{L}_t)_{t \in \mathbb{R}^d}$ acting on $\mathcal{B}$ which codes the characteristic function of the process $(X_n)_n$ and we check assumptions (I1) and (I2) from \cite{G10}. For $k \ge 0$ and $j_1, \ldots, j_k \in \{1, \ldots, d \}$, define $C_{j_1, \ldots, j_k}$ by $C_{j_1, \ldots, j_k}(f) = P(\varphi_{j_1} \ldots \varphi_{j_k} f)$. The assumptions on $\mathcal{B}_0$ and $\mathcal{B}$ show that $C_{j_1, \ldots, j_k}$ acts continuously on $\mathcal{B}$, with a norm bounded by $C^k \| \varphi_{j_1} \|_0 \ldots \| \varphi_{j_k} \|_0$. Now, for $t = (t_1, \ldots, t_d) \in \mathbb{R}^d$, define $$\mathcal{L}_t = \sum_{k=0}^{\infty} \frac{i^k}{k!} \sum_{j_1, \ldots, j_k = 1}^d t_{j_1} \ldots t_{j_k} C_{j_1, \ldots, j_k}.$$ This defines on $\mathbb{R}^d$ a real-analytic family of bounded operators on $\mathcal{B}$, since $$\sum_{k=0}^{\infty}  \| \frac{i^k}{k!} \sum_{j_1, \ldots, j_k = 1}^d t_{j_1} \ldots t_{j_k} C_{j_1, \ldots, j_k} \| \le \sum_{k=0}^{\infty} \frac{1}{k!} \sum_{j_1, \ldots, j_k = 1}^d |t_{j_1}| \ldots |t_{j_k}| \| C_{j_1, \ldots, j_k} \| \le e^{C \sum_{j=0}^d |t_j| \| \varphi_j \|_0} < \infty.$$
For $t \in \mathbb{R}^d$ and $f \in \mathcal{B}$, we have $\mathcal{L}_t(f) = P(e^{i \langle t, \varphi \rangle} f)$, and so the family $\{\mathcal{L}_t\}_{t \in \mathbb{R}^d}$ codes the characteristic function of the process $(X_n)_n$ in the sense of \cite{G10}, as easily seen using lemma \ref{fund}. Since $\mathcal{L}_0 = P$ has a spectral gap on $\mathcal{B}$, this implies (I1). To check (I2), we only need, by proposition 2.3 in \cite{G10}, the continuity of the map $t \mapsto \mathcal{L}_t$ at $t=0$, but this follows immediately from the real-analyticity of this map. \qed

\end{proof}

\section{Annealed central limit theorem via a martingale approximation} \label{clt_martingale}

The main goal of this section is to show that the classical martingale approach to the CLT, see Gordin \cite{Go} and Liverani \cite{Liv96}, can be easily adapted to the random setting, leading to a new proof of Theorem \ref{CLT_spec} for the stationary measure $\mu$, together with a generalization, in the next section, where a sequence of observables is considered, rather than a single one.

In this section, the annealed transfer operator $P$ and Koopman operator $U$ are defined by duality with respect to the stationary measure $\mu$, instead of the measure $m$. We assume moreover that we have decay of correlations for observables in $\mathcal{B}$ against $L^1(\mu)$, in the sense that $$\left| \int_X f \, U^n g \, d\mu - \int_X f \, d\mu \int_X g \, d \mu \right| \le C \lambda^n \|f \| \|g \|_{L^1_{\mu}}$$ for all $f \in \mathcal{B}$ and $g \in L^1(\mu)$. This is the case for instance if we assume that the density $h$ of the stationary measure is bounded away from $0$ and that $\mathcal{B}$ is continuously embedded in $L^1(m)$, see Proposition \ref{decay}.

Recall that we have the Markov operator $U$ which acts on functions defined on $X$ by $Uf(x) =  \int_{\Omega} f(T_{\omega} x) \, d\mathbb{P}(\omega)$. To $U$ is associated a transition probability on $X$ defined by $U(x,A) = U(\mathds{1}_A)(x) = \mathbb{P}( \{ \omega \, / \, T_{\omega}x \in A\})$. Recall also that the stationary measure $\mu$ satisfies $\mu U = \mu$, by definition. We can then define the canonical Markov chain associated to $\mu$ and $U$ :

Let $\Omega^{\star} = X^{\mathbb{N}_0} = \{ \underline{x} = (x_0, x_1, x_2, \ldots, x_n, \ldots ) \}$, endowed with the $\sigma$-algebra $\mathcal{F}$ generated by cylinder sets. As $X$ is Polish, this is also the Borel $\sigma$-algebra associated with the product topology.
The Ionescu-Tulcea's theorem asserts there exists an unique probability measure $\mu_c$ on $\Omega^{\star}$ such that $$\int_{\Omega^{\star}} f(\underline{x}) \, d\mu_c(\underline{x}) = \int_X \mu(dx_0) \int_X U(x_0, dx_1) \ldots \int_X U(x_{n-1}, dx_n) f(x_0, \ldots, x_n)$$ for every $n$ and every bounded measurable function $f : \Omega^{\star} \to \mathbb{R}$ which depends only on $x_0, \ldots, x_n$. If we still denote by $x_n$ the map which associates to each $\underline{x}$ its $n$-th coordinate $x_n$, then $\{x_n\}_{n \ge 0}$ is a Markov chain defined on the probability space $(\Omega^{\star},  \mathcal{F}, \mu_c)$, with initial distribution $\mu$, and transition probability $U$. By stationarity of the measure $\mu$, each $x_n$ is distributed accordingly to $\mu$.

We can define an unilateral shift $\tau$ on $\Omega^{\star}$. By stationarity, it preserves $\mu_c$. Recall also that we have a skew-product system $S$ on $\Omega^{\mathbb{N}} \times X$, defined by $S(\underline{\omega}, x) = (\sigma \underline{\omega}, T_{\omega_1}x)$, where $\sigma$ is the unilateral shift on $\Omega^{\mathbb{N}}$. $S$ preserves the probability measure $\tilde{\mathbb{P}} \otimes \mu = \mathbb{P}^{\otimes \mathbb{N}} \otimes \mu$. This system is related to the shift on $\Omega^{\star}$ in the following way :

Define $\Phi : \Omega^{\mathbb{N}} \times X \to \Omega^{\star}$ by $\Phi(\underline{\omega}, x) = (x, T_{\omega_1}x, T_{\omega_2} T_{\omega_1} x, \ldots, T_{\omega_n} \ldots T_{\omega_1} x, \ldots) = \{p(S^n(\underline{\omega}, x))\}_{n \ge 0}$, where $p(\underline{\omega}, x) = x$. We have the following :

\begin{lemma}

$\Phi$ is measurable, sends $\tilde{\mathbb{P}} \otimes \mu$ on $\mu_c$, and satisfies $\Phi \circ S = \tau \circ \Phi$.

\end{lemma}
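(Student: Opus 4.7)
The plan is to address the three claims in sequence, with the bulk of the work concentrated on the pushforward statement.

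For measurability, observe that the $n$-th coordinate of $\Phi$ is the map $(\underline{\omega}, x) \mapsto T_{\omega_n} \circ \cdots \circ T_{\omega_1}(x)$, which is the projection on the $X$-factor of the $n$-fold iterate $S^n(\underline{\omega}, x)$. Since the standing assumption gives measurability of $(\underline{\omega}, x) \mapsto T_{\underline{\omega}}(x) = T_{\omega_1}(x)$, hence measurability of $S$, an easy induction shows measurability of each coordinate of $\Phi$. Because the Borel $\sigma$-algebra $\mathcal{F}$ on $\Omega^\star = X^{\mathbb{N}_0}$ is generated by the coordinate projections, coordinate-wise measurability gives measurability of $\Phi$.

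The intertwining relation $\Phi \circ S = \tau \circ \Phi$ is a direct computation from the definitions. Writing $\sigma \underline{\omega} = (\omega_2, \omega_3, \ldots)$, both sides evaluated at $(\underline{\omega}, x)$ produce the sequence $(T_{\omega_1}x, T_{\omega_2}T_{\omega_1}x, T_{\omega_3}T_{\omega_2}T_{\omega_1}x, \ldots)$. This is essentially tautological, as $\Phi$ is built from the orbit of $S$.

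The substantive step is the identity $\Phi_\star (\tilde{\mathbb{P}} \otimes \mu) = \mu_c$. By Dynkin's $\pi$--$\lambda$ argument and the Ionescu--Tulcea characterization of $\mu_c$, it suffices to check equality when tested against a bounded measurable $f$ depending only on finitely many coordinates $x_0, \ldots, x_n$. For such $f$, the product structure of $\tilde{\mathbb{P}}$ yields
$$\int_{\tilde{\Omega} \times X} f \circ \Phi \, d(\tilde{\mathbb{P}} \otimes \mu) = \int_X d\mu(x_0) \int_{\Omega^n} f(x_0, T_{\omega_1} x_0, \ldots, T_{\omega_n}\cdots T_{\omega_1} x_0) \, d\mathbb{P}^{\otimes n}(\omega_1, \ldots, \omega_n).$$
The plan is then to integrate out $\omega_n, \omega_{n-1}, \ldots, \omega_1$ one at a time using Fubini, and at each stage apply the identity $\int g(T_\omega y) \, d\mathbb{P}(\omega) = \int g(z) \, U(y, dz)$, which is just the definition of the transition kernel $U$. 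Setting $x_k = T_{\omega_k} \cdots T_{\omega_1} x_0$, each integration converts an $\omega_k$-integral into a $U(x_{k-1}, dx_k)$-integral, and after $n$ such steps one recovers exactly the Ionescu--Tulcea formula defining $\mu_c$.

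The only possible obstacle is bookkeeping: one must be careful that, at each stage of the telescoping, the variables not yet integrated appear only inside the kernels already produced, so that Fubini legitimately brings the next $d\mathbb{P}(\omega_k)$-integral to the innermost position. This is straightforward since the composition $T_{\omega_k} \cdots T_{\omega_1} x_0$ depends measurably on $(\omega_1, \ldots, \omega_k, x_0)$ alone, independently of the remaining $\omega_j$'s.
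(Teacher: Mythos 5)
Your proposal is correct and follows essentially the same route as the paper: reduce the pushforward identity to cylinder test functions and then exploit the product structure of $\tilde{\mathbb{P}}$ together with the defining identity $\int_{\Omega} g(T_{\omega}y)\,d\mathbb{P}(\omega)=\int_X g(z)\,U(y,dz)$. The paper merely packages the telescoping integration as an induction on $n$ with product test functions $f_0(x_0)\cdots f_n(x_n)$, which is equivalent to your step-by-step Fubini argument integrating out $\omega_n,\ldots,\omega_1$.
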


\begin{proof}
The only non-trivial thing to prove is that $\tilde{\mathbb{P}}\otimes \mu$ is sent on $\mu_c$. For this, it is sufficient to prove that \begin{multline*} \int_X \mu(dx) \int_{\Omega^{\mathbb{N}}} f_0(x) f_1(T_{\underline{\omega}}^1x) \ldots f_n(T_{\underline{\omega}}^n x) \, d \tilde{\mathbb{P}}(\underline{\omega}) \, d\mu(x) \\ = \int_X \mu(dx_0) f_0(x_0) \int_X U(x_0, dx_1) f_1(x_ 1) \ldots \int_X U(x_{n-1}, dx_n) f_n(x_n) \end{multline*} for all $n \ge 0$ and all bounded measurable functions $f_0, \ldots, f_n : X \to \mathbb{R}$. We proceed by induction on $n$, the case $n = 0$ being obvious. We have $$ \scriptscriptstyle{ \begin{aligned} && &\int_X \mu(dx_0) f_0(x_0) \int_X U(x_0, dx_1) f_1(x_ 1) \ldots \int_X U(x_{n-1}, dx_n) f_n(x_n) \int_X U(x_n, dx_{n+1}) f_{n+1}(x_{n+1})& \\ &=&  &\int_X \mu(dx_0) f_0(x_0) \int_X U(x_0, dx_1) f_1(x_ 1) \ldots \int_X U(x_{n-1}, dx_n) f_n(x_n) Uf_{n+1}(x_n)& \\ &=&  &\int_X \mu(dx) \int_{\Omega^{\mathbb{N}}} f_0(x) f_1(T_{\underline{\omega}}^1 x) \ldots f_n(T_{\underline{\omega}}^n x) Uf_{n+1}(T_{\underline{\omega}}^n x)\, d \tilde{\mathbb{P}}(\underline{\omega}) \, d\mu(x)& \\ &=& &\int_X \mu(dx) \int_{\Omega^{\mathbb{N}}} f_0(x) f_1(T_{\underline{\omega}}^1 x) \ldots f_n(T_{\underline{\omega}}^n x) f_{n+1}(T_{\underline{\omega}}^{n+1} x) \, d \tilde{\mathbb{P}}(\underline{\omega}) \, d\mu(x)& \end{aligned}}$$ which concludes the proof. \qed
\end{proof}

Let $\pi_n : \Omega^{\star} \to X$ be the projection operator $\pi_n(x_0,  \ldots, x_n,  \ldots) = x_n$ and $\pi = \pi_0$ We lift each $\phi : X \to \mathbb{R}$ on $\Omega^{\star}$ by $\phi_{\pi} = \phi \circ \pi$. We then have $\mathbb{E}_{\mu}(\phi) = \mathbb{E}_{\mu_c}(\phi_{\pi})$.  Notice that $\pi_n = \pi \circ \tau^n$ and $p \circ S^n = \pi_n \circ \Phi$.

For a fixed observable $\phi : X \to \mathbb{R}$ with zero $\mu$-mean, define $X_k = \phi \circ p \circ S^k$ and $S_n = \sum_{k=0}^{n -1} X_k$. We have $X_k = \phi \circ \pi_k \circ \Phi =  \phi_{\pi} \circ \tau^k \circ \Phi$. Hence $S_n = (\sum_{k=0}^{n-1} \phi_{\pi}  \circ \tau^k) \circ \Phi$, and so the law of $S_n$ under $\tilde{\mathbb{P}} \otimes \mu$ is the law of the $n$-th Birkhoff sum of $\phi_{\pi}$ under $\mu_c$. So, to prove the CLT for $S_n$ under the probability measure $\tilde{\mathbb{P}} \otimes \mu$, it suffices to prove it for the Birkhoff sum of the observable $\phi_{\pi}$ for the symbolic system $(\Omega^{\star}, \tau, \mu_c)$.

To this end, we introduce the Koopman operator $\tilde{U}$ and the transfer operator $\tilde{P}$ associated to $(\Omega^{\star}, \tau, \mu_c)$. Since this system is measure-preserving, those operators satisfy $\tilde{P}^k \tilde{U}^k f = f$ and $\tilde{U}^k \tilde{P}^k f = \mathbb{E}_{\mu_c}(f | \mathcal{F}_k)$ for every $\mu_c$ integrable $f$, where $ \mathcal{F}_k = \tau^{-k} \mathcal{F} = \sigma(x_k, x_{k+1}, \ldots)$

We have the following
\begin{lemma} \label{Pphi}
For every $\phi : X \to \mathbb{R}$, we have $\tilde{P}( \phi_{\pi}) = (P \phi)_{\pi}$.
\end{lemma}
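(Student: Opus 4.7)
The plan is to verify the identity $\tilde{P}(\phi_\pi) = (P\phi)_\pi$ through the defining duality of $\tilde{P}$: for every bounded measurable $g : \Omega^{\star} \to \mathbb{R}$, the identity
$$\int_{\Omega^{\star}} (P\phi)_\pi \cdot g \, d\mu_c = \int_{\Omega^{\star}} \phi_\pi \cdot (g \circ \tau) \, d\mu_c$$
characterizes $\tilde{P}(\phi_\pi)$ uniquely. By a standard monotone class argument it is enough to check this for $g$ of product form $g(\underline{x}) = g_0(x_0) g_1(x_1) \cdots g_n(x_n)$ with each $g_i : X \to \mathbb{R}$ bounded and measurable.

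For such $g$, I would unfold both sides using the Ionescu-Tulcea formula defining $\mu_c$. On the right-hand side, the integral $\int \phi(x_0) g_0(x_1) g_1(x_2) \cdots g_n(x_{n+1}) \, d\mu_c$ telescopes from the innermost integration outward: applying $U$ iteratively, it equals
$$\int_X \phi(x_0) \, U\bigl(g_0 \cdot U(g_1 \cdots U g_n)\bigr)(x_0) \, d\mu(x_0).$$
The duality of $P$ and $U$ with respect to $\mu$ then moves the outer $U$ to $P$, giving $\int_X P\phi \cdot g_0 \cdot U(g_1 \cdots U g_n) \, d\mu$. On the left-hand side, $(P\phi)_\pi$ depends only on $x_0$, so the same telescoping directly yields the same expression $\int_X P\phi \cdot g_0 \cdot U(g_1 \cdots U g_n) \, d\mu$, and the two sides coincide.

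An alternative route, which I would mention if cleaner, is to exploit the characterization $\tilde{U}\tilde{P}f = \mathbb{E}_{\mu_c}[f \mid \mathcal{F}_1]$ already recalled in the paper. Since $\phi_\pi$ depends only on $x_0$, the Markov property reduces $\mathbb{E}_{\mu_c}[\phi(x_0) \mid \sigma(x_1, x_2, \ldots)]$ to $\mathbb{E}_{\mu_c}[\phi(x_0) \mid x_1]$, and the reversed-chain identity $\mathbb{E}[\phi(x_0) \mid x_1 = y] = P\phi(y)$ (which is exactly the statement that $P$ is the $L^2(\mu)$-adjoint of $U$) gives $(\tilde{P}\phi_\pi)(x_1, x_2, \ldots) = P\phi(x_1)$, hence $\tilde{P}(\phi_\pi) = (P\phi)_\pi$.

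There is no real obstacle here: the lemma is essentially a bookkeeping identity translating the $\mu$-duality of $P$ and $U$ on $X$ into the $\mu_c$-duality of $\tilde{P}$ and $\tilde{U}$ on $\Omega^{\star}$. The only minor care needed is to justify the monotone class reduction to product test functions and to handle the Fubini interchanges, both of which are standard.
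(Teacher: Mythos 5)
Your proof is correct, but it follows a genuinely different route from the paper. The paper first proves the skew-product identity $P_S(\phi\circ p)=(P\phi)\circ p$ (Lemma \ref{transfer_skew}) by a direct computation with the fibered operators $P_{\omega_1}$ on $\Omega^{\mathbb{N}}\times X$, and then transports it to the symbolic system through the factor map $\Phi$, using $\Phi_{*}(\tilde{\mathbb{P}}\otimes\mu)=\mu_c$ and $\Phi\circ S=\tau\circ\Phi$. You instead never leave the path space: you test the defining duality of $\tilde{P}$ against cylinder functions, unwind the Ionescu--Tulcea formula for $\mu_c$ by telescoping the transition operator $U$ from the inside out, and invoke the $\mu$-duality $\int P\phi\cdot\psi\,d\mu=\int\phi\cdot U\psi\,d\mu$ exactly once at the outermost integration (this duality is indeed available here, since in Section \ref{clt_martingale} the operators $P$ and $U$ are taken dual with respect to $\mu$). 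Your second route, via $\tilde{U}\tilde{P}f=\mathbb{E}_{\mu_c}(f\mid\mathcal{F}_1)$ and the reversed-chain identity $\mathbb{E}_{\mu_c}[\phi(x_0)\mid x_1]=P\phi(x_1)$, is the same computation packaged probabilistically; to conclude from $\tilde{U}\tilde{P}\phi_\pi=\tilde{U}\bigl((P\phi)_\pi\bigr)$ you should say explicitly that one applies $\tilde{P}$ and uses $\tilde{P}\tilde{U}=I$ (or the injectivity of $\tilde{U}$), but that is a one-line fix. The trade-off: the paper's detour through the skew product reuses Lemma \ref{transfer_skew}, which is needed independently (e.g. in the aperiodicity criterion of Section \ref{spectral}), and avoids the monotone class and Fubini bookkeeping you must carry; your argument is more self-contained on $\Omega^{\star}$, dispenses with $\Phi$ altogether, and makes transparent the probabilistic meaning of the lemma, namely that $\tilde{P}$ acting on functions of $x_0$ is the transition operator of the time-reversed chain.
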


We will deduce this result from the corresponding statement for the transfer operator $P_S$ of the skew product :

\begin{lemma} \label{transfer_skew}
For every $\phi : X \to \mathbb{R}$, we have $P_S( \phi \circ p) = (P\phi) \circ p$.
\end{lemma}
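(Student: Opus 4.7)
The plan is to verify the identity by checking that both sides of the claimed equality act identically against test functions, via the duality characterisation of $P_S$ with respect to $\tilde{\mathbb{P}} \otimes \mu$. That is, it suffices to prove that for every bounded measurable $G : \tilde{\Omega} \times X \to \mathbb{R}$,
\begin{equation*}
\int_{\tilde{\Omega} \times X} (\phi \circ p)(\underline{\omega}, x) \, G(S(\underline{\omega}, x)) \, d(\tilde{\mathbb{P}} \otimes \mu)(\underline{\omega}, x) = \int_{\tilde{\Omega} \times X} ((P\phi) \circ p)(\underline{\omega}, x) \, G(\underline{\omega}, x) \, d(\tilde{\mathbb{P}} \otimes \mu)(\underline{\omega}, x),
\end{equation*}
since $S$ preserves $\tilde{\mathbb{P}} \otimes \mu$ and uniqueness of the Radon--Nikodym derivative then identifies $P_S(\phi \circ p)$ with $(P\phi) \circ p$.

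To compute the left-hand side, I would unpack $S(\underline{\omega}, x) = (\sigma \underline{\omega}, T_{\omega_1} x)$ and exploit the product structure $\tilde{\mathbb{P}} = \mathbb{P} \otimes \tilde{\mathbb{P}}$ under the identification $\underline{\omega} \leftrightarrow (\omega_1, \sigma \underline{\omega})$. After applying Fubini, the integral becomes
\begin{equation*}
\int_{\tilde{\Omega}} \int_\Omega \int_X \phi(x) \, G(\sigma \underline{\omega}, T_{\omega_1} x) \, d\mu(x) \, d\mathbb{P}(\omega_1) \, d\tilde{\mathbb{P}}(\sigma \underline{\omega}).
\end{equation*}
Here the crucial observation is that $\sigma \underline{\omega}$ appears only as a fixed parameter in the first slot of $G$, while the integration over $\omega_1$ and $x$ sees only $G_{\sigma \underline{\omega}}(y) := G(\sigma \underline{\omega}, y)$ composed with $T_{\omega_1}$.

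For each frozen value of $\sigma \underline{\omega}$, the inner double integral is, by definition of the averaged Koopman operator $U$, equal to $\int_X \phi(x) \, U G_{\sigma \underline{\omega}}(x) \, d\mu(x)$. Applying the duality of $P$ and $U$ with respect to $\mu$ (which holds in the setting of Section~\ref{clt_martingale} by hypothesis), this becomes $\int_X (P\phi)(x) \, G_{\sigma \underline{\omega}}(x) \, d\mu(x)$. Re-integrating over $\sigma \underline{\omega}$ against $\tilde{\mathbb{P}}$ and relabelling the variable produces exactly the right-hand side of the displayed identity.

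The proof is essentially a bookkeeping exercise: no real analytic difficulty arises because $G \circ S$ does not involve $\omega_1$ in its first coordinate, so the randomness and the dynamics decouple cleanly under the product measure. The only step requiring care is the application of Fubini, which is legitimate since $G$ is bounded and $\phi \in L^1(\mu)$; beyond that, the argument reduces to the duality between $P$ and $U$ on $X$, already recalled in Section~\ref{framework}.
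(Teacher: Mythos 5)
Your proof is correct and follows essentially the same route as the paper: both verify the identity against arbitrary bounded test functions, using the product structure of $\tilde{\mathbb{P}}$, Fubini, and the duality between transfer and Koopman operators with respect to $\mu$. The only cosmetic difference is that the paper applies the fiberwise duality (via $P_{\omega_1}$) before averaging over $\omega_1$, whereas you average over $\omega_1$ first and then invoke the annealed $P$--$U$ duality; this changes nothing of substance.
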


\begin{proof}
Let $\psi : \Omega^{\mathbb{N}} \times X \to \mathbb{R}$ be an arbitrary element of $L^{\infty}(\tilde{\mathbb{P}} \otimes \mu)$. We have to show that $\int_{\Omega^{\mathbb{N}} \times X} (\phi \circ p) (\psi \circ S) \, d(\tilde{\mathbb{P}} \otimes \mu) = \int_{\Omega^{\mathbb{N}} \times X} ((P\phi) \circ p) \psi \, d(\tilde{\mathbb{P}} \otimes \mu)$.
But $$ \begin{aligned}
& \int_{\Omega^{\mathbb{N}} \times X} (\phi \circ p) (\psi \circ S) \, d(\tilde{\mathbb{P}} \otimes \mu)  = \int_{\Omega^{\mathbb{N}}} \int_X \phi(x) \psi(\sigma \underline{w}, T_{\omega_1} x) \,  d\mu(x) d \tilde{\mathbb{P}}(\underline{\omega}) & \\ &= \int_{\Omega^{\mathbb{N}}} \int_X P_{\omega_1} \phi(x) \psi(\sigma \underline{\omega}, x) \, d\mu(x) d \tilde{\mathbb{P}}(\underline{\omega})& \\  & =  \int_X \int_{\Omega^{\mathbb{N}}} \int_{\Omega} P_{\omega_1} \phi(x) \psi((\omega_2, \omega_3, \ldots), x) \, d\mathbb{P}(\omega_1) d \tilde{\mathbb{P}}(\omega_2, \omega_3, \ldots) d\mu(x)& \\ &=  \int_X P\phi(x) \int_{\Omega^{\mathbb{N}}} \psi(\underline{\omega}, x) \, d\tilde{\mathbb{P}} (\underline{\omega}) d\mu(x) = \int_{\Omega^{\mathbb{N}} \times X} ((P\phi) \circ p) \psi \, d(\tilde{\mathbb{P}} \otimes \mu)&
\end{aligned}$$ \qed

\end{proof}

\begin{proof}[Proof of lemma \ref{Pphi}] Let $\psi : \Omega^{\star} \to \mathbb{R}$ be an arbitrary element of $L^{\infty}(\mu_c)$. We have to show that $\int_{\Omega^{\star}} \phi_{\pi} (\psi \circ \tau) \, d\mu_c = \int_{\Omega^{\star}} (P\phi)_{\pi} \psi \, d\mu_c$. We have
$$\begin{aligned}
& \int_{\Omega^{\star}} \phi_{\pi} (\psi \circ \tau) \, d\mu_c =  \int_{\Omega^{\mathbb{N}} \times X} (\phi \circ \pi \circ \Phi) (\psi \circ \tau \circ \Phi) \, d(\tilde{\mathbb{P}} \otimes \mu)& \\ &= \int_{\Omega^{\mathbb{N}} \times X} (\phi \circ p) ( \psi \circ \Phi \circ S) \, d (\tilde{\mathbb{P}} \otimes \mu) =\int_{\Omega^{\mathbb{N}} \times X} P_S(\phi \circ p) (\psi \circ \Phi) \, d(\tilde{\mathbb{P}} \otimes \mu) & \\  &= \int_{\Omega^{\mathbb{N}} \times X} (P \phi \circ p) ( \psi \circ \Phi) \,  d(\tilde{\mathbb{P}} \otimes \mu)   = \int_{\Omega^{\mathbb{N}} \times X} (P\phi \circ \pi \circ \Phi) (\psi \circ \Phi) \, d(\tilde{\mathbb{P}} \otimes \mu) & \\ &= \int_{\Omega^{\star}} (P\phi)_{\pi} \psi \, d \mu_c &
\end{aligned} $$ \qed

\end{proof}

This helps us to construct a martingale approximation for the Birkhoff sums of $\phi_{\pi}$. We first remark upon the stationary case. By our assumption on decay of correlations, the series $w = \sum_{n=1}^{\infty} P^n \phi$ is convergent in $L^{\infty}(\mu)$ if $\phi \in \mathcal{B}$. We define $\chi = \phi_{\pi} + w_{\pi} - w_{\pi} \circ \tau$ on $\Omega^{\star}$. $\chi$ satisfies $\tilde{P} \chi = \tilde{P}(\phi_{\pi}) + \tilde{P}(w_{\pi}) - \tilde{P} \tilde{U} w_{\pi} = (P\phi)_{\pi} + (P w)_{\pi} - w_{\pi} = 0$, since $Pw =  w - P\phi $.

We claim that $\{\chi \circ \tau^k\}_{k\ge 0}$ is  a reverse martingale difference with respect to the decreasing filtration $\{\mathcal{F}_k\}_{k \ge 0}$. Indeed, we have $\mathbb{E}_{\mu_c}(\chi \circ \tau^k | \mathcal{F}_{k+1}) = \tilde{U}^{k+1} \tilde{P}^{k+1} \tilde{U}^k \chi = \tilde{U}^{k+1} \tilde{P} \chi = 0$. Uniqueness of the stationary measure also yields that the associated martingale is ergodic, and hence satisfies a CLT (see Billingsley \cite{Billingsley}).

Since $\sum_{k=0}^{n-1} \phi_{\pi} \circ \tau^k = \sum_{k=0}^{n-1} \chi \circ \tau^k + w_{\pi} \circ \tau^n - w_{\pi}$, and $\frac{w_{\pi} \circ \tau^n - w_{\pi}}{\sqrt{n}}$ goes to zero in probability (because it goes to $0$ in the $L^1$ norm), it follows that
$\frac{1}{\sqrt{n}}  S_n $ converges to the gaussian law $\mathcal{N}(0,\sigma^2)$ in distribution , where $\sigma^2 = \mathbb{E}_{\mu_c}(\chi^2)$, since  $\sum_{j=0}^{n-1} \frac{1}{\sqrt{n}} \chi \circ \tau^j $ converges to $\mathcal{N}(0,\sigma^2)$ in distribution.

\section{Dynamical Borel-Cantelli lemmas} \label{borel_cantelli}

In this section, we make the same assumptions as in the previous one. Recall the following result from \cite{Sprindzuk} :

\begin{theorem}
Let $f_k$ be a sequence of non-negative measurable functions on a measure space $(Y, \nu)$, and let $\bar{f_k}$, $\varphi_k$ be sequences of real numbers such that $0 \le \bar{f_k} \le \varphi_k \le M$ for all $k \ge 1$ and some $M > 0$. Suppose that $$\int_Y \left( \sum_{m < k \le n} \left( f_k(y) - \bar{f_k} \right) \right)^2 \, d\nu(y) \le C \sum_{m < k \le n} \varphi_k$$ for arbitrary integers $m < n$ and some $C> 0$. Then $$\sum_{1 \le k \le n} f_k(y) = \sum_{1 \le k \le n} \bar{f_k} + O(\Phi^{1/2}(n) \log^{3/2 + \epsilon} \Phi(n))$$ for $\nu$-a.e. $y \in Y$, for all $\epsilon > 0$ and $\Phi(n) = \sum_{1 \le k \le n} \varphi_k$.
\end{theorem}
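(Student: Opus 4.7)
The plan is to apply the classical Gál--Koksma strategy underlying Sprindzuk's lemma: combine the second-moment hypothesis with a Rademacher--Menchov-type maximal inequality, perform a dyadic decomposition along the $\Phi$-scale, and conclude via Borel--Cantelli. Throughout, write $S(m,n)(y)=\sum_{m<k\le n}(f_k(y)-\bar{f}_k)$ and $\Phi(m,n)=\sum_{m<k\le n}\varphi_k$, so the hypothesis reads $\|S(m,n)\|_{L^2(\nu)}^2\le C\,\Phi(m,n)$. The case $\Phi(\infty)<\infty$ is trivial (the series $\sum_k(f_k-\bar{f}_k)$ is in $L^2$), so I assume $\Phi(n)\to\infty$.

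The first ingredient I would establish is a Rademacher--Menchov maximal inequality: for all integers $a<b$,
$$\int_Y \max_{a\le n\le b} S(a,n)^2\,d\nu \;\le\; C\,(\log_2(b-a)+1)^2\,\Phi(a,b).$$
This comes from the standard chaining argument: for every $n\in(a,b]$, decompose $(a,n]$ as a disjoint union of at most $\log_2(b-a)+1$ dyadic subintervals of a fixed dyadic partition of $(a,b]$, apply Cauchy--Schwarz in the number of pieces, and sum the variance bounds given by the hypothesis on each piece.

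Next I would block dyadically in the $\Phi$-scale. Choose integers $n_r$ with $2^r\le\Phi(n_r)<2^{r+1}$, set $B_r=(n_r,n_{r+1}]$, and note $\Phi(n_r,n_{r+1})\le 2^{r+1}$ together with $|B_r|\le M\cdot 2^{r+1}$, so $\log|B_r|=O(r)$. For fixed $\epsilon>0$, take $\psi_r=2^{r/2}\,r^{3/2+\epsilon/2}$. The maximal inequality on $B_r$ combined with Chebyshev gives
$$\nu\!\left(\max_{n\in B_r}|S(n_r,n)|>\psi_r\right)\;\le\;\frac{C\,r^2\cdot 2^{r+1}}{\psi_r^2}\;=\;O(r^{-1-\epsilon}),$$
which is summable in $r$. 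Borel--Cantelli then produces, for $\nu$-a.e.\ $y$, an integer $r_0(y)$ such that $\max_{n\in B_r}|S(n_r,n)|\le\psi_r$ for every $r\ge r_0(y)$. For an arbitrary $N$ with $n_r\le N<n_{r+1}$, telescoping across blocks and using the geometric summation $\sum_{s\le r}\psi_s=O(\psi_r)$ yields $|S(0,N)|=O(\Phi(N)^{1/2}\log^{3/2+\epsilon}\Phi(N))$ almost surely, which is the claimed conclusion.

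The main obstacle is the matching of the two logarithmic contributions: the Rademacher--Menchov chaining contributes a factor $\log^2$, while summability in Borel--Cantelli demands an additional $\log^{1+\epsilon}$ inside $\psi_r^2$. Together these fit under $\Phi^{1/2}\log^{3/2+\epsilon}\Phi$ only because the dyadic blocks are chosen at the scale $\Phi(n_r)\asymp 2^r$ rather than $n_r\asymp 2^r$, so that $\log|B_r|\sim\log\Phi(n_r)\sim r$ rather than growing like $\log n_r$. Once the scale is chosen correctly, the bookkeeping is routine.
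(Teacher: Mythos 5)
The paper itself does not prove this statement --- it quotes it from Sprindzuk --- so your proposal has to stand on its own, and it does not quite: the architecture (blocking on the $\Phi$-scale, a Rademacher--Menchov maximal inequality, Chebyshev plus Borel--Cantelli, telescoping) is the right family of ideas, but there is a genuine gap exactly at the step you call the crux, namely the bound $|B_r|\le M\,2^{r+1}$. From $\varphi_k\le M$ you only get $\Phi(n_r,n_{r+1})\le M\,|B_r|$, i.e.\ the \emph{lower} bound $|B_r|\ge \Phi(n_r,n_{r+1})/M$; an upper bound on $|B_r|$ in terms of $2^r$ would require a lower bound on the $\varphi_k$, which is not among the hypotheses. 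Consequently $\log|B_r|$ need not be $O(r)$: take for instance $\varphi_k\asymp 1/k$, so that $\Phi(n)\asymp\log n$, $n_r\asymp e^{2^r}$ and $\log|B_r|\asymp 2^r$. Your maximal inequality then only gives $\int_Y\max_{n\in B_r}S(n_r,n)^2\,d\nu\lesssim 2^{2r}\cdot 2^r$, and with $\psi_r=2^{r/2}r^{3/2+\epsilon/2}$ the Chebyshev bound is of order $2^{2r}/r^{3+\epsilon}$, which is not summable. So the argument as written fails precisely in a regime the theorem is meant to cover.

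The missing ingredient is the one that makes the hypotheses $f_k\ge 0$ and $0\le\bar f_k\le\varphi_k\le M$ relevant --- note that your proof never uses them. In the Gál--Koksma/Sprindzuk argument one chains dyadically in the $\Phi$-scale all the way down to pieces on which $\Phi$ increases by $O(1)$ (a single index contributes at most $M$), and on such a finest piece $(u,v]$ one uses no maximal inequality over the (possibly enormous) number of indices it contains: for $u<n\le v$, non-negativity gives $0\le\sum_{u<k\le n}f_k\le\sum_{u<k\le v}f_k$ while $0\le\sum_{u<k\le n}\bar f_k\le\Phi(u,v)=O(1)$, hence $S(0,u)-O(1)\le S(0,n)\le S(0,v)+O(1)$, so the supremum over the piece is controlled by the two endpoint values alone. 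With this finest-scale sandwich replacing the index-counting logarithm, the number of chaining levels inside a block is $O(r)=O(\log\Phi)$ and your exponent bookkeeping leading to $\Phi^{1/2}\log^{3/2+\epsilon}\Phi$ does go through. (A minor additional point: in the case $\Phi(\infty)<\infty$, $L^2$-convergence of $S(0,n)$ alone does not give a.e.\ boundedness; you need the monotonicity of $n\mapsto\sum_{k\le n}f_k$ to upgrade convergence in $L^2$ to a.e.\ convergence.)
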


Applying this result to the probability space $(\tilde{\Omega} \times X, \tilde{\mathbb{P}} \otimes \mu)$, and using decay of correlations, we get :

\begin{proposition}
If $\phi_n$ is a sequence of non-negative functions in $\mathcal{B}$, with $\sup_{n} \, \| \phi_n\| < \infty$ and
$E_n \to \infty$, where $E_n=\sum_{j=0}^{n-1}\int \phi_n \, d\mu$, then
\[
\lim_{n\to\infty} \frac{1}{E_n} \sum_{j=0}^{n-1} \phi_j (S^j (x,\omega))\to 1
\]
for $\tilde{\mathbb{P}} \otimes \mu$ a.e. $(\omega, x) \in \tilde{\Omega} \times X$.

\end{proposition}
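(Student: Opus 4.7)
The plan is to apply the Sprindzuk theorem just recalled, with $Y = \tilde{\Omega} \times X$, $\nu = \tilde{\mathbb{P}} \otimes \mu$, $f_k(\underline{\omega}, x) = \phi_k(S^k(\underline{\omega}, x))$ and $\bar{f}_k = \int_X \phi_k \, d\mu$. Note that $\bar{f}_k \le \|\phi_k\|_{L^\infty_\mu} \le C$ uniformly in $k$ (using that $\mathcal{B}$ embeds in $L^\infty$, as in the one-dimensional BV and Quasi-H\"older examples, combined with $\sup_n \|\phi_n\| < \infty$), so one may choose $\varphi_k = \bar{f}_k$ and the pointwise upper bound $M$ exists. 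Then $\Phi(n) = \sum_{k \le n} \bar{f}_k$ coincides with $E_n$ up to a reindexing shift and tends to infinity by hypothesis. The only substantive task is to verify the $L^2$-covariance estimate
\[
\int \Bigl( \sum_{m < k \le n} (f_k - \bar{f}_k) \Bigr)^2 \, d(\tilde{\mathbb{P}} \otimes \mu) \le C \sum_{m < k \le n} \bar{f}_k.
\]

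To this end I expand the square and estimate the diagonal and off-diagonal terms separately. The diagonal term is bounded by $\int f_k^2 \, d(\tilde{\mathbb{P}} \otimes \mu) = \int \phi_k^2 \, d\mu \le \|\phi_k\|_{L^\infty_\mu} \bar{f}_k \le C \bar{f}_k$. For the off-diagonal term with $k < l$, the same manipulation used inside the proof of Proposition \ref{green_kubo} (writing $d\mu = h \, dm$, moving $T_{\omega_1}, \dots, T_{\omega_k}$ to the other side via the duality $P_{\omega_i}$/$U_{\omega_i}$, using that $P^k h = h$, and then integrating out $\omega_{k+1}, \dots$) yields
\[
\int f_k f_l \, d(\tilde{\mathbb{P}} \otimes \mu) = \int_X \phi_k \cdot U^{l-k} \phi_l \, d\mu.
\]
Subtracting $\bar{f}_k \bar{f}_l$ and applying the decay of correlations from $\mathcal{B}$ against $L^1(\mu)$, which is available by assumption in this section, together with $\phi_l \ge 0$ (so that $\|\phi_l\|_{L^1_\mu} = \bar{f}_l$) and $\sup_n \|\phi_n\| < \infty$, gives
\[
\Bigl| \int (f_k - \bar{f}_k)(f_l - \bar{f}_l) \, d(\tilde{\mathbb{P}} \otimes \mu) \Bigr| \le C \lambda^{l-k} \|\phi_k\| \, \bar{f}_l \le C' \lambda^{l-k} \bar{f}_l.
\]

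Summing the cross-term bound over pairs $m < k < l \le n$, the geometric factor $\sum_{k < l} \lambda^{l-k} \le \lambda/(1-\lambda)$ can be absorbed into a constant, which together with the diagonal bound produces the required estimate with right-hand side proportional to $\sum_{m < k \le n} \bar{f}_k$. The Sprindzuk theorem then delivers
\[
\sum_{j=0}^{n-1} \phi_j(S^j(\underline{\omega}, x)) = E_n + O\bigl( E_n^{1/2} \log^{3/2+\epsilon} E_n \bigr)
\]
for $\tilde{\mathbb{P}} \otimes \mu$-almost every $(\underline{\omega}, x)$; dividing by $E_n \to \infty$ finishes the proof.

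The main obstacle is the covariance identity for the skew-product; everything else is bookkeeping. It is crucial to recognize that, even though the $\phi_j$ are not iterates of a single observable, the computation from Proposition \ref{green_kubo} goes through unchanged for $\int f_k f_l \, d(\tilde{\mathbb{P}} \otimes \mu)$ because it only uses the product structure of $\tilde{\mathbb{P}}$ and the stationarity of $\mu$, not the fact that $\phi_k = \phi_l$. Once this identity is in hand, the uniform $\mathcal{B}$-bound on $\phi_k$ combined with the $(\mathcal{B}, L^1_\mu)$-decay of correlations yields the crucial $\lambda^{l-k} \bar{f}_l$ control, which is exactly what is needed for the Sprindzuk hypothesis with $\varphi_k = \bar{f}_k$.
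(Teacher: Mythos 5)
Your proof is correct and follows essentially the same route as the paper, which simply invokes the Sprindzuk lemma on $(\tilde{\Omega}\times X,\tilde{\mathbb{P}}\otimes\mu)$ together with annealed decay of correlations (referring to Kim's deterministic argument for the details). Your verification of the $L^2$ hypothesis — the covariance identity $\int f_k f_l \, d(\tilde{\mathbb{P}}\otimes\mu)=\int \phi_k\, U^{l-k}\phi_l\, d\mu$ obtained exactly as in Proposition \ref{green_kubo}, followed by the $(\mathcal{B},L^1_\mu)$ correlation bound and the geometric summation — is precisely the content the paper leaves implicit.
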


See theorem 2.1 in Kim \cite{Kim} for a completely analogue proof in a deterministic setting. The annealed version of the Strong Borel-Cantelli property clearly implies a quenched version, namely for $\tilde{\mathbb{P}} $-a.e. $\omega$ for $\mu$-a.e. $x\in X$,
\[
\lim_{n\to\infty} \frac{1}{E_n} \sum_{j=0}^{n-1} \phi_j (S^j (x,\omega))\to 1
\]

We now show how to prove a CLT, following our martingale approach described in the previous section.

\subsection*{CLT for  Borel-Cantelli sequences}

 Let $p\in X$ and let $B_n(p)$ be a sequence of nested balls
about $p$ such that for  $0\le \gamma_2 \le \gamma_1 \le 1$ and constants $C_1$,$C_2>0$ we have $\frac{C_1}{n^{\gamma_1}} \le
 \mu (B_n(p))\le \frac{C_2}{n^{\gamma_2}}$.

 Let $\phi_n = \mathds{1}_{B_n(p)}$
be the characteristic function of $B_n (p)$. We assume that $\phi_n$ is a bounded sequences in $\mathcal{B}$, which is clearly the case when $\mathcal{B}$ is $\rm BV$ or Quasi-H\"{o}lder. We will sometimes write $\mathbb{E}[\phi ]$ or $\int \phi$  for the integral $\int \phi~d\mu$
when the context is understood.

First we lift $\phi_i$ to $\Omega^*$ and define $(\phi_i)_{\pi}=\phi_i\circ \pi$ and then we normalize and write $\tilde{\phi}_j=(\phi_j)_{\pi}-\int (\phi_j)_{\pi} d\mu_c$.

We are almost in the setting of~\cite[Proposition 5.1]{HNVZ} which states,

\begin{proposition}
Suppose $(T, X, \mu)$ is an ergodic transformation whose transfer operator $P$ satisfies, for some constants $C>0$, $0<\theta<1$,
\[
\|P^n \phi \|_{\mathcal{B}} \le C \theta^n \|\phi \|_{\mathcal{B}}
\]
 for all $\phi $ such that $\phi~d\mu=0$. Let $B_i:=B_i(p)$ be nested balls about a point $p$ such that for constants $0\le \gamma_2 \le \gamma_1 \le 1$,$C_1>0$,$C_2>0$ we have $\frac{C_1}{n^{\gamma_1} }\le \mu (B_n(p))\le \frac{C_2}{n^{\gamma_2}}$.  Let
 \[
 a_n^2:= E(\sum_{j=1}^n (1_{B_i}\circ T^i -\mu (B_i))^2
 \]
 Then
 \[
 \liminf \frac{a_n^2}{E_n}\ge 1
 \]
 and
 \[
 \frac{1}{a_n} \sum_{j=1}^n (\phi_j-\int \phi_j~d\mu) \circ T^j \to \mathcal{N}(0,1)
 \]

\end{proposition}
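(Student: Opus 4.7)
The plan is to lift the problem to the symbolic system $(\Omega^{\star}, \tau, \mu_c)$ constructed in Section \ref{clt_martingale} and then mimic the martingale approach of \cite{HNVZ}. Since $\Phi$ conjugates $S$ to $\tau$ and sends $\tilde{\mathbb{P}} \otimes \mu$ onto $\mu_c$, the partial sum $\sum_{j=1}^n \phi_j(T_{\underline{\omega}}^j x)$ has the same law under $\tilde{\mathbb{P}} \otimes \mu$ as $\sum_{j=1}^n (\phi_j)_{\pi} \circ \tau^j$ under $\mu_c$. By Lemma \ref{Pphi}, $\tilde{P}^k((\phi_j)_{\pi}) = (P^k \phi_j)_{\pi}$, so the exponential decay of correlations for $P$ on $\mathcal{B}$ (Proposition \ref{decay}) transfers to exponential decay of $\|\tilde{P}^k \tilde{\phi}_j\|_{L^{\infty}(\mu_c)}$, uniformly in $j$ thanks to the standing hypothesis $\sup_j \|\phi_j\|_{\mathcal{B}} < \infty$.

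For the martingale decomposition I would set
$$H_j := \sum_{k \ge 1} \tilde{P}^k \tilde{\phi}_j, \qquad \chi_j := \tilde{\phi}_j + H_j - H_j \circ \tau,$$
the series defining $H_j$ converging in $L^{\infty}(\mu_c)$ uniformly in $j$ by the previous step. Using $\tilde{P}\tilde{U} = \mathrm{Id}$ one verifies $\tilde{P}\chi_j = 0$, hence $(\chi_j \circ \tau^j)_{j \ge 1}$ is a reverse martingale difference array for the decreasing filtration $\mathcal{F}_j = \tau^{-j}\mathcal{F}$. Writing
$$\sum_{j=1}^n \tilde{\phi}_j \circ \tau^j = M_n + R_n, \quad M_n := \sum_{j=1}^n \chi_j \circ \tau^j, \quad R_n := \sum_{j=1}^n \bigl(H_j \circ \tau^j - H_j \circ \tau^{j+1}\bigr),$$
the bound $\liminf a_n^2/E_n \ge 1$ follows by expanding $a_n^2$ into the diagonal $\sum_j \mu(B_j)(1-\mu(B_j)) = E_n(1+o(1))$ plus off-diagonal covariances that Proposition \ref{decay}(3) bounds by $C\lambda^{j-i}\mu(B_i)$ and which sum geometrically to $o(E_n)$. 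The Hall--Heyde reverse-martingale CLT then applies to $M_n/a_n$: the Lindeberg condition is trivial because $\|\chi_j\|_{\infty} \le \|\phi_j\|_{\infty} + 2\|H_j\|_{\infty}$ is uniformly bounded while $a_n \to \infty$, and the convergence of conditional variances $\sum_j \mathbb{E}[\chi_j^2 \circ \tau^j \mid \mathcal{F}_{j+1}]/a_n^2 \to 1$ reduces to decorrelation estimates handled exactly as in \cite[\S 5]{HNVZ}.

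The main obstacle is the control of the remainder $R_n$, since the telescoping that trivializes it in the stationary case collapses only partially here. Abel summation gives
$$R_n = H_1 \circ \tau - H_n \circ \tau^{n+1} + \sum_{j=2}^n (H_j - H_{j-1}) \circ \tau^j,$$
whose boundary terms are $O(1)$, and where the middle sum must be shown to have $L^2(\mu_c)$ norm $o(a_n)$. I would exploit the nested structure $B_j \subset B_{j-1}$: the increment $H_j - H_{j-1} = (\sum_{k \ge 1} P^k(\phi_j - \phi_{j-1} - \mu(\phi_j - \phi_{j-1})))_{\pi}$ only involves the annulus $\mathds{1}_{B_{j-1}\setminus B_j}$ whose $L^1$ norm is controlled via $\mu(B_n) \le C_2/n^{\gamma_2}$, and then expand the $L^2$ norm of the middle sum into diagonal and off-diagonal covariances, each controlled by exponential decay together with the polynomial measure bounds on $\mu(B_n)$. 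Once $\|R_n\|_{L^2} = o(a_n)$ is established, Slutsky's theorem combined with the martingale CLT for $M_n/a_n$ yields the desired convergence to $\mathcal{N}(0,1)$.
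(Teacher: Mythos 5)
Your high-level route — lift to the symbolic system $(\Omega^{\star},\tau,\mu_c)$, build a reverse martingale difference array with respect to $\mathcal{F}_j=\tau^{-j}\mathcal{F}$, and invoke the Hall--Heyde theorem — is the same as the paper's. The substantive difference is the choice of coboundary, and it is exactly there that your argument has a gap. The paper (following \cite{HNVZ}) takes the \emph{aggregated} corrector $w_n=P\phi_{n-1}+P^2\phi_{n-2}+\cdots+P^n\phi_0$ and $\psi_n=(\phi_n)_{\pi}-(w_{n+1})_{\pi}\circ\tau+(w_n)_{\pi}$; because the index of $w$ matches the power of $\tau$, the non-martingale part telescopes exactly to $(w_{n+1})_{\pi}\circ\tau^{n+1}-(w_1)_{\pi}\circ\tau$, and since $w_{n+1}-\sum_{j\le n}\int\phi_j\,d\mu$ is bounded in $\mathcal{B}$ (hence in sup norm) uniformly in $n$, the remainder is $O(1)$ and trivially negligible after dividing by $a_n$. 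Your per-index corrector $H_j=\sum_{k\ge1}\tilde{P}^k\tilde{\phi}_j$ destroys this telescoping, and the resulting term $\sum_{j}(H_{j-1}-H_j)\circ\tau^j$ is precisely what your sketch does not control. Keep the sizes in mind: the hypotheses allow $\mu(B_n)\asymp n^{-1}$ (the case $\gamma_1=1$), in which case $E_n\asymp\log n$, so you must show the remainder is $o(\sqrt{\log n})$ in $L^2(\mu_c)$. The triangle inequality gives only $\sum_j\|H_{j-1}-H_j\|_{L^2}\lesssim\sum_j\mu(B_{j-1}\setminus B_j)^{1/2}$, which can be of order $\sqrt{n}$; and the covariance expansion you propose, fed with the natural estimates ($\|H_{j-1}-H_j\|_{BV}\lesssim 1$ while only the $L^1$ or $L^2$ norm is small in terms of the annulus measure), yields a bound of order $\log n$, which is not $o(E_n)$ in that boundary case. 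The decomposition can in fact be rescued, but only by a finer argument (dyadic blocking of the annuli, or comparing your $\chi_j$ with the aggregated $\psi_j$ — which amounts to reintroducing $w_n$), so the phrase ``each controlled by exponential decay together with the polynomial measure bounds'' does not close the step; it is exactly the difficulty that the paper's choice of $w_n$ is designed to eliminate.

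A secondary but genuine error: your one-line proof of $\liminf a_n^2/E_n\ge 1$ does not work as stated. The off-diagonal covariances are bounded by $C\lambda^{j-i}\mu(B_j)$ (Proposition \ref{decay}), and summing over $i<j$ gives $\tfrac{C\lambda}{1-\lambda}E_n$, i.e. $O(E_n)$, not $o(E_n)$ — so nothing prevents them from cancelling the diagonal; moreover the diagonal equals $E_n-\sum_j\mu(B_j)^2$, which is $E_n(1+o(1))$ only when $\mu(B_n)\to 0$. The correct argument (as in \cite{HNVZ}, which the paper cites and then follows for conditions (a)--(d) of Theorem 3.2 of \cite{Hall_Heyde}) splits short-range from long-range lags, uses positivity of $\mu(B_i\cap T^{-(j-i)}B_j)$ and the shrinking of the targets; the same goes for the conditional-variance condition you defer to \cite{HNVZ}. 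As written, both the variance lower bound and the control of your non-telescoping remainder are asserted rather than proved, and the estimates you invoke are insufficient for them.
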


As a fairly direct corollary we may  show in our setting:

\begin{corollary}
\[
\frac{1}{a_n} \sum_{j=1}^n \tilde{\phi}_j \circ \tau^j \to \mathcal{N}(0,1)
\]
and hence
\[
 \frac{1}{a_n} \sum_{j=1}^n (\phi_j-\int \phi_j~d\mu)\circ S^j \to \mathcal{N}(0,1)
 \]

\end{corollary}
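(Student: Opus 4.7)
The plan is to deduce the corollary from Proposition 5.1 of \cite{HNVZ} applied to the deterministic symbolic system $(\Omega^{\star}, \tau, \mu_c)$ with the lifted observables $(\phi_j)_{\pi}$, and then to transfer the conclusion to the skew product via the factor map $\Phi$ constructed in Section \ref{clt_martingale}.

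First, by Lemma \ref{Pphi} one has $\tilde{P}((\phi)_{\pi}) = (P\phi)_{\pi}$ for every $\phi \in L^1(\mu)$, and consequently $\tilde{P}^n((\phi)_{\pi}) = (P^n \phi)_{\pi}$. The spectral gap for $P$ on $\mathcal{B}$ therefore transfers to the estimate $\|\tilde{P}^n f_{\pi}\|_{\star} \le C \theta^n \|f_{\pi}\|_{\star}$ for lifted, mean-zero observables, where $\|f_{\pi}\|_{\star} := \|f\|_{\mathcal{B}}$. Since the characteristic function of any ball lies in $\mathcal{B}$ with $\sup_j \|\mathbf{1}_{B_j}\|_{\mathcal{B}} < \infty$ (for both the $\mathrm{BV}$ and Quasi-H\"older spaces), the input hypothesis of the HNVZ proposition is available.

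I then follow the HNVZ martingale argument in the symbolic setting. Define the coboundaries $w_j = \sum_{n \ge 1} P^n \tilde{\phi}_j$, which converge in $L^{\infty}(\mu)$ by the decay of correlations assumed at the start of Section \ref{clt_martingale}, and set $\chi_j = \tilde{\phi}_{j,\pi} + w_{j,\pi} - w_{j,\pi} \circ \tau$. A direct calculation using Lemma \ref{Pphi} together with $\tilde{P}\tilde{U} = \mathrm{id}$ gives $\tilde{P} \chi_j = 0$, so $\{\chi_j \circ \tau^j\}_{j \ge 1}$ is a reverse martingale difference array relative to the decreasing filtration $\{\mathcal{F}_k\}_{k \ge 0}$. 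Applying the Hall--Heyde CLT for non-stationary martingales, with the Lindeberg and variance conditions verified as in \cite{HNVZ} using the lower bound $\liminf a_n^2/E_n \ge 1$, one obtains $a_n^{-1} \sum_{j=1}^n \chi_j \circ \tau^j \to \mathcal{N}(0,1)$.

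The main technical obstacle is showing that the coboundary remainder $a_n^{-1} \sum_{j=1}^n (w_{j,\pi} \circ \tau^j - w_{j,\pi})$ tends to zero in probability. The crude bound $\|w_j\|_{\infty} = O(1)$ does not suffice, since $a_n$ may grow only as slowly as $\sqrt{E_n}$. The fix, as in \cite{HNVZ}, is to exploit the $L^2(\mu)$-smallness $\|w_j\|_{L^2(\mu)} = O(\mu(B_j)^{1/2})$, which follows by interpolating the trivial bound $\|P^n \tilde{\phi}_j\|_{L^1(\mu)} \le 2 \mu(B_j)$ against the decay $\|P^n \tilde{\phi}_j\|_{\infty} \le C \theta^n$ furnished by the spectral gap; a second-moment computation then controls the remainder. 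This proves the first displayed limit. The second is immediate: the identity $\Phi \circ S = \tau \circ \Phi$ together with $\Phi_{\star}(\tilde{\mathbb{P}} \otimes \mu) = \mu_c$ established in Section \ref{clt_martingale} implies that $\sum_{j=1}^n \tilde{\phi}_{j,\pi} \circ \tau^j$ under $\mu_c$ is equidistributed with $\sum_{j=1}^n (\phi_j - \int \phi_j \, d\mu) \circ S^j$ under $\tilde{\mathbb{P}} \otimes \mu$, so convergence in law transfers verbatim.
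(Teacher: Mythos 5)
Your overall architecture (lift to the symbolic system $(\Omega^{\star},\tau,\mu_c)$, build a reverse martingale difference array, invoke Hall--Heyde, and push the conclusion back through the factor map $\Phi$) is the same as the paper's, and the final transfer step via $\Phi_{\star}(\tilde{\mathbb{P}}\otimes\mu)=\mu_c$ and $\Phi\circ S=\tau\circ\Phi$ is exactly what the paper does. The gap is in the martingale--coboundary decomposition itself. The paper, following \cite[Proposition 5.1]{HNVZ}, uses the \emph{cumulative} correction $w_n=P\phi_{n-1}+P^2\phi_{n-2}+\cdots+P^n\phi_0$ and $\psi_n=(\phi_n)_{\pi}-(w_{n+1})_{\pi}\circ\tau+(w_n)_{\pi}$; with this choice the error term telescopes exactly, $\sum_{j=1}^n\psi_j\circ\tau^j-\sum_{j=1}^n(\phi_j)_{\pi}\circ\tau^j=(w_1)_{\pi}\circ\tau-(w_{n+1})_{\pi}\circ\tau^{n+1}$, and (after centering) these two boundary terms are uniformly bounded by the spectral gap, hence trivially $o(a_n)$ once $a_n\to\infty$. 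You instead take a separate coboundary for each index, $w_j=\sum_{n\ge1}P^n\tilde\phi_j$, and then your remainder is a sum of $n$ non-telescoping terms $\sum_{j=1}^n\bigl(w_{j,\pi}\circ\tau^{j+1}-w_{j,\pi}\circ\tau^{j}\bigr)$. Your proposed control, the interpolation bound $\|w_j\|_{L^2(\mu)}=O(\mu(B_j)^{1/2})$ followed by ``a second-moment computation,'' is not enough: the triangle inequality gives only $\sum_j\mu(B_j)^{1/2}$, which can be of order $\sqrt n$ while $a_n^2\asymp E_n$ may be as small as $\log n$; and even after regrouping into differences $w_{j-1}-w_j$ and using decay of correlations for the cross terms, the natural bounds produce a quantity of order $E_n$ (for example when $\mu(B_j)\asymp j^{-1}$, so $E_n\asymp\log n$), i.e.\ $O(a_n^2)$ rather than $o(a_n^2)$. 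So the negligibility of your remainder is asserted but not established, and this is precisely the difficulty that the HNVZ/paper choice of $w_n$ is designed to eliminate.

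A secondary point: since your martingale differences $\chi_j$ are not the $\psi_j$ of \cite{HNVZ}, the phrase ``Lindeberg and variance conditions verified as in \cite{HNVZ}'' cannot be taken literally; in particular matching the martingale variance to the normalization $a_n^2$ (the variance of the original sum) again requires the remainder to be $o(a_n)$ in $L^2$, so the same gap reappears there. The clean fix is to adopt the paper's cumulative $w_n$ (equivalently $w_{n+1}=P(\phi_n+w_n)$), check $\tilde P\psi_n=0$ as in the text, and then the Hall--Heyde conditions (a)--(d) can indeed be verified exactly as in \cite[Proposition 5.1]{HNVZ}, using the exponential decay of annealed correlations stated in the paper's proof.
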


\begin{proof}

Define $\phi_0=1$ and for $n\ge 1$
\[
w_n =  P \phi_{n-1}+P^2 \phi_{n-2}+\ldots P^n \phi_0
\]
so that $w_1=P\phi_0$, $w_2=P\phi_1 +P^2\phi_0$, $w_3=P\phi_2+P^2\phi_1+P^3\phi_0$ etc...
For $n\ge 1$ define
\[
\psi_n= (\phi_n)_{\pi} - (w_{n+1})_{\pi} \circ \tau + (w_n)_{\pi}
\]

An easy calculation shows that $\tilde{P}\psi_n=0$ and hence $X_{ni}:=\psi_i\circ \tau^i/(a_n)$ is a  reverse martingale difference array with respect to the filtration
$\mathcal{F}_i$.

We have exponential decay of correlations in the sense that  if $j>i$ then
\begin{eqnarray*}
\left|\int \tilde{\phi}_i \circ \tau^i \tilde{\phi}_j \circ \tau_j d\mu_c \right| &=&\left|\int \tilde{\phi}_i  \tilde{\phi}_j \circ \tau^{j-i} d\mu_c  \right|\\
&\le& C\theta^{j-i} \left\| \phi_j-\int \phi_j~d\mu\right\|_{\mathcal{B}} \|\tilde{\phi}_j\|_1
\end{eqnarray*}
where $ \| \phi_j-\int \phi_j~d\mu\|_{\mathcal{B}}$ is bounded uniformly over $j$.

The proof of~\cite[Proposition 5.1]{HNVZ} may now be followed exactly to establish conditions $(a)$, $(b)$, $(c)$ and $(d)$ of Theorem 3.2 from Hall and Heyde~\cite{Hall_Heyde} as well as show that  the variance $a_n$  is unbounded. \qed \end{proof}

\section{Erd\"os-R\'enyi laws} \label{erdos_renyi}

Erd\"{o }s-R\'enyi limit laws give information on the maximal average gain precisely in the case where the length of the time window ensures  there is a non-degenerate limit. Recall the following proposition from \cite{DN} :

\begin{proposition} \label{prop:erdos1}

Let $(X,T, \mu)$ be a probability preserving transformation, and $\varphi : X \to \mathbb{R}$ be a mean-zero $\mu$-integrable function. Let $S_n(\varphi) = \varphi + \ldots + \varphi \circ T^{n-1}$.

\begin{enumerate}

\item Suppose that $\varphi$ satisfies a large deviation principle with rate  function $I$ defined on the open set $U$. Let $\alpha >0$ and set
$$ l_n=l_n(\alpha)=\left[\frac{\log n}{I(\alpha)}\right]\qquad n\in\mathbb N.$$
Then the upper Erd\"os-R\'enyi law holds, that is, for $\mu$ a.e. $x\in X$
$$ \limsup_{n\to\infty} \max\{S_{l_n} (\varphi) \circ T^j (x)/l_n: 0\le j\le n-l_n\} \le \alpha.$$

\item Suppose that for every $\epsilon>0$ the series $\sum_{n>0} \mu (B_n (\epsilon))$, where $B_n(\epsilon)=\{\max_{0\le m\le n-l_n} S_{l_n}\circ T^m \le l_n(\alpha-\epsilon)\}$ is summable.

Then the lower Erd\"os-R\'enyi law holds, that is, for  $\mu$ a.e. $x\in X$
$$ \liminf_{n\to\infty} \max\{S_{l_n} (\varphi) \circ T^j (x)/l_n: 0\le j\le n-l_n\} \ge \alpha.$$

\end{enumerate}

\end{proposition}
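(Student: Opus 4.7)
The strategy is a first-moment estimate on the maximum, combined with a Borel--Cantelli argument along a geometric subsequence. Fix $\epsilon>0$ and set $\beta=\alpha+\epsilon$. Writing $M_n(x)=\max_{0\le j\le n-l_n} S_{l_n}(\varphi)\circ T^j(x)/l_n$, the $T$-invariance of $\mu$ together with a union bound gives
$$\mu(M_n>\beta)\le (n-l_n+1)\,\mu\bigl(S_{l_n}(\varphi)>l_n\beta\bigr).$$
The hypothesised large deviation principle yields, for each $\eta>0$ and all $n$ sufficiently large, $\mu(S_{l_n}(\varphi)>l_n\beta)\le e^{-l_n(I(\beta)-\eta)}$. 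Substituting the definition $l_n=[\log n/I(\alpha)]$ produces $\mu(M_n>\beta)\le C\,n^{1-(I(\beta)-\eta)/I(\alpha)}$. Since $I$ is strictly convex with $I(0)=0$ (hence strictly increasing on $(0,\infty)$ on the side relevant to us), we have $I(\beta)>I(\alpha)$; choosing $\eta$ small enough the exponent becomes $-\delta$ for some $\delta>0$.

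\textbf{Subsequence extraction and interpolation.} Since $n^{-\delta}$ is not summable, I evaluate along $n_k=2^k$, for which $\sum_k\mu(M_{n_k}>\beta)<\infty$. Borel--Cantelli then gives $\limsup_k M_{n_k}\le\beta$ almost surely. To pass from the subsequence to the full sequence, I would exploit that for $n_k\le n<n_{k+1}$ the window length $l_n$ differs from $l_{n_{k+1}}$ by at most $\log 2/I(\alpha)+O(1)$, and the index set $[0,n-l_n]$ is contained in $[0,n_{k+1}-l_{n_{k+1}}]$ up to a bounded number of extra terms. Using essential boundedness of $\varphi$ to absorb those bounded extra contributions into an $o(1)$ error, one obtains $M_n\le M_{n_{k+1}}+o(1)$. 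Letting $\epsilon\downarrow 0$ along a rational sequence finishes the proof of the upper law.

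\textbf{Plan for the lower bound (part 2).} This is essentially immediate from the summability hypothesis and the classical (first) Borel--Cantelli lemma: if $\sum_n\mu(B_n(\epsilon))<\infty$, then for $\mu$-a.e. $x$ the event $B_n(\epsilon)$ fails for all but finitely many $n$, i.e.\ $\max_{0\le m\le n-l_n}S_{l_n}(\varphi)\circ T^m(x)/l_n>\alpha-\epsilon$ eventually. Taking a countable sequence $\epsilon_k\downarrow 0$ and intersecting the full-measure sets yields $\liminf_n \max_{0\le m\le n-l_n}S_{l_n}(\varphi)\circ T^m(x)/l_n\ge\alpha$ almost surely.

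\textbf{Main obstacle.} The genuinely delicate step is the interpolation from the geometric subsequence $n_k=2^k$ to arbitrary $n\in[n_k,n_{k+1})$, because the length $l_n$ of the Birkhoff sums appearing in $M_n$ itself depends on $n$, so one is not comparing ergodic sums of a fixed length. The cleanest route is a uniform-in-length bound $|S_{l_n}(\varphi)\circ T^j-S_{l_{n_{k+1}}}(\varphi)\circ T^{j'}|\le(l_{n_{k+1}}-l_n)\|\varphi\|_\infty=O(1)$ for suitably chosen $j'$, which lets the maximum at level $n$ be controlled by the maximum at level $n_{k+1}$ up to a vanishing error after division by $l_n\to\infty$. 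This is where the boundedness of the observable (built into $\mathcal B_0$) and the strict convexity of $I$ (ensured in our setting by $\sigma^2>0$, via Theorem~\ref{LDP_spec}) both play an essential role.
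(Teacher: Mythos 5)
Your part (2) is correct and is the standard argument: summability of $\sum_n\mu(B_n(\epsilon))$ plus the first Borel--Cantelli lemma gives $\liminf_n \max_{0\le m\le n-l_n}S_{l_n}\circ T^m/l_n\ge\alpha-\epsilon$ a.e., and letting $\epsilon\downarrow 0$ along a countable sequence finishes. (Note the paper itself does not reprove this proposition; it is quoted from \cite{DN}.)

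For part (1) there is a genuine gap relative to the statement as given. Your interpolation from the geometric subsequence $n_k=2^k$ to general $n$ bounds the discrepancy between windows of length $l_n$ and $l_{n_{k+1}}$ by $(l_{n_{k+1}}-l_n)\|\varphi\|_{\infty}$, i.e.\ it needs $\varphi$ to be essentially bounded, whereas the proposition assumes only that $\varphi$ is $\mu$-integrable; for merely integrable $\varphi$ the extra block $S_{l_{n_{k+1}}-l_n}(\varphi)\circ T^{j+l_n}$, although of bounded length, need not be $o(l_n)$ uniformly over the $\sim n$ indices $j$, so the step $M_n\le M_{n_{k+1}}+o(1)$ is not justified. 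The classical proof (and the one behind \cite{DN}) avoids interpolation across different window lengths altogether by using that $l_n$ is piecewise constant in $n$: for each integer $m$ let $n_m$ be the largest $n$ with $l_n=m$, so $n_m\approx e^{(m+1)I(\alpha)}$; for every $n$ with $l_n=m$ one has, by monotonicity of the index range, $\max_{0\le j\le n-l_n}S_{l_n}\circ T^j\le\max_{0\le j\le n_m-m}S_m\circ T^j$, and by invariance, the union bound and the LDP, $\mu\bigl(\max_{0\le j\le n_m-m}S_m\circ T^j>m(\alpha+\epsilon)\bigr)\le n_m\,e^{-m(I(\alpha+\epsilon)-\eta)}\le C\,e^{-m(I(\alpha+\epsilon)-I(\alpha)-\eta)}$, which is a summable geometric series in $m$ once $\eta<I(\alpha+\epsilon)-I(\alpha)$; Borel--Cantelli in $m$ then gives the upper law for the full sequence, with no boundedness assumption. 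In the paper's application (Section \ref{erdos_renyi}, $\varphi\in{\rm BV}$ hence bounded, and $I$ strictly convex by Theorem \ref{LDP_spec} when $\sigma^2>0$) your route does go through, but as a proof of the abstract proposition you should either add the hypothesis $\varphi\in L^{\infty}$ or replace the $2^k$-subsequence argument by the blocking-in-$m$ argument above; also note that your use of $I(\alpha+\epsilon)>I(\alpha)$ tacitly assumes the rate function increases to the right of $\alpha$, which should be stated as part of the hypotheses on $I$.
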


\begin{remark}
Assumptions (a) and (b) of Proposition \ref{prop:erdos1} together imply  that
\[
\lim_{n\to \infty}  \max_{0\le m\le n-l_n} \frac{S_{l_n}\circ T^m}{l_n}=\alpha.
\]
\end{remark}

In this section, we will suppose that $X = [0,1]$, and that the Banach space $\mathcal{B}$ is ${\rm BV}$, the space of functions of bounded variation on $[0,1]$. All maps $T_{\omega}$ are piecewise $C^2$, and we assume an uniform upper bound $L>0$ for their derivatives. We will apply the previous proposition to the symbolic system $(\Omega^{\star}, \tau, \mu_c)$ introduced before.

\begin{theorem}
Suppose $\phi: X\to \R$ is of bounded variation with $\int_X \phi \, d\mu = 0$ and define $S_n=\sum_{j=0}^{n-1} \phi_{\pi}\circ \tau^j$.
Let $\alpha >0$ and set
$$ l_n=l_n(\alpha)=\left[\frac{\log n}{I(\alpha)}\right]\qquad n\in\mathbb N$$ where $I(.)$ is the rate function associated to $\phi$, which exists by Theorem \ref{LDP_spec}.
Then
\[
\lim_{n\to \infty}  \max_{0\le m\le n-l_n} \frac{S_{l_n}\circ T^m}{l_n}=\alpha.
\]
\end{theorem}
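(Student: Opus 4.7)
The plan is to apply Proposition~\ref{prop:erdos1} to the symbolic system $(\Omega^\star, \tau, \mu_c)$ with observable $\phi_\pi$. As established in Section~\ref{clt_martingale}, the Birkhoff sum $\sum_{j=0}^{n-1}\phi_\pi\circ\tau^j$ has, under $\mu_c$, exactly the same distribution as $S_n$ under $\tilde{\mathbb{P}}\otimes\mu$. Consequently Theorem~\ref{LDP_spec} directly furnishes hypothesis~(a) of Proposition~\ref{prop:erdos1} with rate function $I=c$, and the upper bound $\limsup\le\alpha$ comes for free.

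For the matching lower bound, I would verify the summability hypothesis~(b), that is, $\sum_n \mu_c(B_n(\epsilon)) < \infty$ for every small $\epsilon > 0$. The strategy is to restrict to a sparse subfamily of translates: set $g_n:=\lceil(\log n)^2\rceil$, $j_i := i(l_n+g_n)$ for $0\le i<k$ with $k := \lfloor(n-l_n)/(l_n+g_n)\rfloor\sim n/\log n$, and $A_i := \{S_{l_n}\circ\tau^{j_i} > l_n(\alpha-\epsilon)\}$. Then $B_n(\epsilon)\subset\bigcap_{i=0}^{k-1} A_i^c$. The heart of the argument is to replace this joint probability by a product up to a controllable error. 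Using the Markov property of $(x_n)$ and conditioning on $x_{j_{k-2}+l_n-1}$, one writes
\[
\mu_c\Big(\bigcap_{i=0}^{k-1} A_i^c\Big) = \int \tilde{f}(y)\, U^{g_n+1}F(y)\, d\mu(y),
\]
where $F(y) := \mathbb{E}_{\mu_c}[\mathbf{1}_{A_0^c}\mid x_0=y]$ satisfies $\int F\,d\mu = \mu_c(A_{k-1}^c)$, and $\tilde f$ is the conditional expectation of $\mathbf{1}_{\bigcap_{i<k-1}A_i^c}$ given the last coordinate of its block. Exponential decay of correlations (Proposition~\ref{decay}, part 3) then gives
\[
\mu_c\Big(\bigcap_{i=0}^{k-1} A_i^c\Big) \le \mu_c(A_{k-1}^c)\,\mu_c\Big(\bigcap_{i=0}^{k-2} A_i^c\Big) + C\lambda^{g_n},
\]
and iterating yields $\mu_c(\bigcap_i A_i^c)\le (1-p_n)^k + kC\lambda^{g_n}$ with $p_n := \mu_c(S_{l_n} > l_n(\alpha-\epsilon))$.

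The final numerical check is short. Theorem~\ref{LDP_spec} gives $p_n\ge e^{-l_n I(\alpha-\epsilon)(1+o(1))}\ge n^{-\beta-o(1)}$ with $\beta := I(\alpha-\epsilon)/I(\alpha) < 1$ by strict convexity of $I$ together with $I(0)=0$. Hence $(1-p_n)^k \le \exp(-c\,n^{1-\beta}/\log n)$ is summable in $n$, and the choice $g_n = (\log n)^2$ makes $k\lambda^{g_n}$ superpolynomially small, hence also summable. Applying Proposition~\ref{prop:erdos1} then yields $\liminf\ge\alpha$, matching the upper bound. The main obstacle lies in the decoupling step: $\tilde f$ is only bounded measurable a priori and not manifestly of bounded variation, so Proposition~\ref{decay} does not apply to it verbatim. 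I would overcome this by replacing $\tilde f$ with a BV approximant obtained by spatial mollification in $y\in[0,1]$, using the uniform bound $\|\tilde f\|_\infty\le 1$ to absorb the truncation error and exploiting the Banach-algebra structure of BV to keep the norm of the approximant bounded independently of $n$. This is the standard technical hurdle in such Erd\H{o}s--R\'enyi arguments and is analogous to the treatment in~\cite{DN}.
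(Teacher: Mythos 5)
Your overall skeleton coincides with the paper's: lift everything to the symbolic system $(\Omega^{\star},\tau,\mu_c)$, get hypothesis (a) of Proposition \ref{prop:erdos1} from the annealed LDP of Theorem \ref{LDP_spec}, and prove hypothesis (b) by a blocking argument combining decay of correlations with the LDP lower bound; your final bookkeeping ($p_n\ge n^{-\beta-o(1)}$ with $\beta=I(\alpha-\epsilon)/I(\alpha)<1$, so $(1-p_n)^k$ is stretched-exponentially small while $k\lambda^{g_n}$ is superpolynomially small) is exactly what is needed. The genuine gap is in your decoupling step, and your own diagnosis understates it. In $\int \tilde f\,U^{g_n+1}F\,d\mu$, the factor that must carry the $\mathcal{B}_0$ (BV) norm in Proposition \ref{decay} is $\tilde f$, the conditional expectation of the intersection of the first $k-1$ block events; this object has no controllable variation, and the proposed repair (mollify $\tilde f$ and absorb the error using $\|\tilde f\|_{\infty}\le 1$) does not work: the $L^1$ error of a mollification of a general bounded measurable function admits no quantitative bound in terms of its sup norm, so you cannot trade it off against $\lambda^{g_n}\|\tilde f_{\delta}\|_{BV}\sim\lambda^{g_n}/\delta$ without a modulus of continuity or variation bound for $\tilde f$ --- which is precisely what is missing. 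Note also that the same difficulty already appears for a single block: for fixed $\underline{\omega}$, the indicator of $\{S_{l_n}\le l_n(\alpha-\epsilon)\}$ has variation comparable to the number of monotonicity branches of $T_{\underline{\omega}}^{l_n}$, i.e.\ exponential in $l_n$, so even the ``nice'' factor cannot be taken to be a raw indicator with uniformly bounded norm.

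The paper closes this by orienting the decoupling the other way and regularizing only one block at a time. It peels the \emph{first} block off the front: the regular factor is a Lipschitz majorant $\varphi_{\epsilon}$ with $\mathds{1}_{A_n^{\epsilon}}\le\varphi_{\epsilon}\le 1$, $\int\varphi_{\epsilon}\,d\mu<\mu(A_n^{\epsilon/2})$, and Lipschitz norm at most $L^{(1+\eta)l_n}$ (this uses the standing assumption of a uniform derivative bound $L$, and the norm is only polynomially large in $n$ since $l_n\sim\log n/I(\alpha)$), while the entire intersection of the later blocks enters the correlation inequality only through its $L^1$ norm, which is at most $1$ --- exactly the pairing Proposition \ref{decay} allows. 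Iterating from the front yields $\mu(E_n^0(\epsilon))\le \mu(A_n^{\epsilon/2})^{n/(\log n)^{\kappa}}+nC\theta^{(\log n)^{\kappa}}L^{(1+\eta)l_n}$, both terms summable; the two-scale sandwich ($\epsilon$ versus $\epsilon/2$) is what keeps the LDP in control of the integral of the majorant. If you reverse your conditioning so that the single, explicitly regularized block is the BV factor and the long intersection is the $L^1$ factor, and replace the unquantified mollification of $\tilde f$ by the Lipschitz sandwich of \cite{DN} with the explicit bound $L^{(1+\eta)l_n}$, your argument closes and essentially becomes the paper's proof.
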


\begin{proof}
Since $\phi$ satisfies an annealed LDP, which can be immediately lifted to a LDP for $\phi_{\pi}$, we need only prove $(2)$. As in the section on the logistic map in \cite{DN} we use a blocking argument to establish $(2)$.

For all $s > 0$, define $A_n^s = \{S_{l_n} \le l_n(\alpha - s )\}$. We fix $\epsilon > 0$, and consider $A_n^{\epsilon}$ and $A_n^{\epsilon / 2}$. Let $0<\eta<1$. We define $\varphi_{\epsilon}$ to be a Lipschitz function with Lipschitz norm at most $L^{(1+\eta)l_n}$ satisfying $\mathds{1}_{A_n^{\epsilon}} \le \varphi_{\epsilon} \le 1$ and $\mu(A_n^{\epsilon}) < \int_X \varphi_{\epsilon} \, d\mu < \mu(A_n^{\epsilon / 2})$, in the same way as in the proof of Theorem 3.1 in \cite{DN}.

Define $C_m (\epsilon)=\{S_{l_n}\circ \tau^m \le l_n(\alpha-\epsilon)\} $ and $B_n(\epsilon)=\bigcap_{m=0}^{n-l_n} C_m (\epsilon)$.
We use a blocking argument to take advantage of decay of correlations and
intercalate by blocks of length $(\log n)^{\kappa}$, $\kappa>1$. We define
\[
  E^0_n(\epsilon) := \bigcap_{m=0}^{[(n-(\log n)^{\kappa})/(\log n)^{\kappa})]} C_{m[(\log n)^{\kappa}]}(\epsilon)
  \]
and  in general for $0\le j < [\frac{n}{(\log n)^{\kappa}}]$
\[
E_n^j (\epsilon):=\bigcap_{m=0}^{[(n-(j+1)(\log n)^{\kappa})/(\log n)^{\kappa})]} C_{m[(\log n)^{\kappa}]}(\epsilon).
\]
Note that
$\mu (B_n (\epsilon) )\le \mu (E_n^0 (\epsilon) )$.  For each $j$, let
 $\psi_{j}  = \mathds{1}_{E_n^j (\epsilon)}$ denote  the characteristic function of $E_n^j (\epsilon)$.

By decay of correlations we have
\begin{eqnarray*}
\mu  (E_n^0 (\epsilon) ) &\le& \int \varphi_{\epsilon}\cdot  \psi_1 \circ \tau^{[(\log n)^{\kappa}]}d\mu_c\\
&\le&C \theta^{(\log n)^{\kappa}} \|\varphi_{\epsilon}\|_{BV} \|\psi_1\|_{1} +  \int \varphi_{\epsilon}~d\mu_c \int \psi_1~d\mu_c \\
&\le&  \int \varphi_{\epsilon}~d\mu_c  \int \psi_1 ~d\mu_c +C \theta^{(\log n)^{\kappa}} (L^{(1+\eta) l_n}).
\end{eqnarray*}
Applying  decay of correlations again to $  \int \psi_1 ~d\mu_c$ we iterate
and conclude
\[
\mu  (E_n^0 (\epsilon) ) \le n C \theta^{(\log n)^{\kappa}} L^{(1+\eta) l_n} +\mu( A_n^{\epsilon/2})^{n/(\log n)^{\kappa}}.
\]
The term $n C \theta^{(\log n)^{\kappa}} L^{(1+\eta) l_n}$ is clearly summable since $\kappa>1$. \qed

\end{proof}

\begin{remark}
We would obtain a quenched Erd\"os-R\'enyi law as well, if we could establish exponential decay of correlations for
 $ \tilde{\mathbb{P}}$ almost every $\underline{\omega}$, together with a quenched LDP for functions of bounded variation.
\end{remark}

\section{Quenched CLT for random one dimensional systems} \label{quenched_clt}

In this section, we consider the quenched CLT, that is a CLT holding for almost every fixed sequence $\underline{\omega}$. We first state a general result, which is basically a consequence of \cite{ALS}.
Let $\{T_{\omega}\}_{\omega \in \Omega}$ be a iid random dynamical system acting on $X$, with a stationary measure $\mu$. Let $\varphi : X \to \mathbb{R}$ be an observable with $\int_X  \varphi d \mu = 0$, and define as before $X_k(\underline{\omega}, x) = \varphi (T_{\underline{\omega}}^k x)$ and $S_n = \sum_{k=0}^{n-1} X_k$.
We will need to introduce a auxiliary random system defined as follows : the underlying probability space is still $(\Omega, \mathbb{P})$, while the auxiliary system acts on $X^2$, with associated maps $\hat{T}_{\omega}$ given by $\hat{T}_{\omega}(x,y) = (T_{\omega}x, T_{\omega}y)$. Define then a new observable $\hat{\varphi} :X^2 \to \mathbb{R}$ by $\hat{\varphi}(x,y) = \varphi(x) - \varphi(y)$, and denote its associated Birkhoff sums by $\hat{S}_n$.

\begin{theorem}
Assume there exists $\sigma^2 >0$ and a constant $C > 0$ such that for all $t \in \mathbb{R}$ and $n \ge 1$ with $\frac{t}{\sqrt{n}}$ small enough :

\begin{enumerate}

\item[(1)] $\left| \mathbb{E}_{\tilde{\mathbb{P}} \otimes \mu}(e^{i \frac{t}{\sqrt{n}} S_n}) - e^{- \frac{t^2 \sigma^2}{2}} \right| \le C \frac{1 + |t|^3}{\sqrt{n}}$,

\item[(2)] $\left| \mathbb{E}_{\tilde{\mathbb{P}} \otimes (\mu \otimes \mu)}(e^{i \frac{t}{\sqrt{n}}  \hat{S}_n}) - e^{- t^2 \sigma^2} \right| \le C \frac{1 + |t|^3}{\sqrt{n}}$.

\end{enumerate}

Suppose also that for $n \ge 1$ and $\epsilon > 0$ :

\begin{enumerate}

\item[(3)] $\tilde{\mathbb{P}} \otimes \mu \left( \left| \frac{S_n}{n} \right| \ge \epsilon \right) \le C e^{-C \epsilon^2 n}$.

\end{enumerate}

Then, the quenched CLT holds : for $\tilde{\mathbb{P}}$-a.e. sequence $\underline{\omega} \in \Omega^{\mathbb{N}}$ one has $$\frac{\sum_{k=0}^{n-1} \varphi \circ T_{\underline{\omega}}^k}{\sqrt{n}} \Longrightarrow_{\mu} \mathcal{N}(0, \sigma^2).$$

\end{theorem}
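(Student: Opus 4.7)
The proof follows the strategy of Ayyer--Liverani--Stenlund \cite{ALS}: establish the CLT along a sparse deterministic subsequence $(n_l)$ by a Borel--Cantelli argument on characteristic functions, then interpolate between consecutive $n_l$'s using the concentration estimate (3).

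Set $\psi_n(\underline{\omega}, t) = \int_X e^{it S_n(\underline{\omega},x)/\sqrt{n}}\,d\mu(x)$. The role of the doubled system is that, by Fubini and the definition of $\hat{\varphi}$,
$$|\psi_n(\underline{\omega}, t)|^2 = \int_{X^2} e^{it \hat{S}_n(\underline{\omega},x,y)/\sqrt{n}}\,d(\mu\otimes\mu)(x,y).$$
Expanding $|\psi_n - e^{-t^2\sigma^2/2}|^2$ and taking the $\tilde{\mathbb{P}}$-expectation, hypotheses (1) and (2) together with the identity $e^{-t^2\sigma^2} - 2(e^{-t^2\sigma^2/2})^2 + e^{-t^2\sigma^2} = 0$ produce the key $L^2$ bound
$$\mathbb{E}_{\tilde{\mathbb{P}}}\bigl|\psi_n(\cdot,t) - e^{-t^2\sigma^2/2}\bigr|^2 \le C(1+|t|^3)/\sqrt{n}.$$

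Rather than work pointwise in $t$, I would test directly against smooth observables. For $f \in C_c^\infty(\mathbb{R})$, Fourier inversion yields
$$\mathbb{E}_\mu[f(S_n/\sqrt{n})](\underline{\omega}) - \int f\,d\mathcal{N}(0,\sigma^2) = \frac{1}{2\pi}\int \hat{f}(t)\bigl(\psi_n(\underline{\omega},-t)-e^{-t^2\sigma^2/2}\bigr)\,dt,$$
and Cauchy--Schwarz in $t$ (with the Schwartz decay of $\hat{f}$ absorbing the region $|t|>c\sqrt{n}$ where the hypotheses do not apply directly, and the trivial bound $|\psi_n|\le 1$ taking care of that tail) gives $\mathbb{E}_{\tilde{\mathbb{P}}}|\mathbb{E}_\mu[f(S_n/\sqrt{n})] - \int f\,d\mathcal{N}(0,\sigma^2)|^2 \le C_f/\sqrt{n}$. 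Choosing $n_l = l^q$ with $q>2$ makes this summable, so Markov's inequality and Borel--Cantelli applied simultaneously to a countable family dense in $C_c^\infty(\mathbb{R})$ yield a $\tilde{\mathbb{P}}$-full measure set of $\underline{\omega}$'s for which the laws of $S_{n_l}(\underline{\omega},\cdot)/\sqrt{n_l}$ under $\mu$ converge vaguely -- hence weakly, because the limit is a probability measure -- to $\mathcal{N}(0,\sigma^2)$.

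It remains to bridge the gaps. The invariance of $\tilde{\mathbb{P}}\otimes\mu$ under the skew-product $F$ shows that $S_n - S_{n_l}$ is distributed as $S_{n-n_l}$, so hypothesis (3) gives
$$\tilde{\mathbb{P}}\otimes\mu\bigl(|S_n-S_{n_l}|>\epsilon\sqrt{n_l}\bigr) \le C\exp\bigl(-C\epsilon^2 n_l/(n-n_l)\bigr).$$
A union bound over $n \in [n_l, n_{l+1})$ combined with the polynomial choice $n_l=l^q$ keeps the right-hand side summable in $l$, so Borel--Cantelli forces $\max_{n_l\le n<n_{l+1}}|S_n-S_{n_l}|/\sqrt{n_l} \to 0$ both $\tilde{\mathbb{P}}\otimes\mu$-almost surely and, by Fubini, in $\mu$-probability for $\tilde{\mathbb{P}}$-a.e.\ $\underline{\omega}$. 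Since $n_{l(n)}/n \to 1$ in the gap regime, Slutsky's lemma concludes that $S_n/\sqrt{n} \Rightarrow \mathcal{N}(0,\sigma^2)$ under $\mu$ for $\tilde{\mathbb{P}}$-a.e.\ $\underline{\omega}$. The main technical subtlety is the passage from almost sure pointwise convergence of characteristic functions (on a dense set) to weak convergence of the underlying distributions under $\mu$: testing against a countable family of smooth observables via Fourier inversion circumvents the need for an independent tightness argument, as tightness is automatic once one knows the limit is a probability measure.
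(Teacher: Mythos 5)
Your proposal is correct and follows essentially the same route the paper takes, namely the Ayyer--Liverani--Stenlund scheme it cites: the doubled-system identity $|\psi_n|^2=\mathbb{E}_{\mu\otimes\mu}(e^{it\hat{S}_n/\sqrt{n}})$ combined with hypotheses (1)--(2) to get the $L^2(\tilde{\mathbb{P}})$ bound of order $n^{-1/2}$, Borel--Cantelli along a polynomial subsequence $n_l=l^q$ with $q>2$, and gap-filling via the large-deviation estimate (3) together with stationarity of $\tilde{\mathbb{P}}\otimes\mu$ under the skew product. Your one cosmetic deviation --- testing against a countable family of smooth compactly supported functions via Fourier inversion rather than working with characteristic functions at fixed $t$ and then invoking L\'evy continuity --- is a clean way to handle the dense-in-$t$/tightness step and does not change the substance of the argument.
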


The first and third assumptions can be proved using the spectral approach described in this paper. Indeed the first one corresponds to lemma \ref{speed_conv_carac}, while the third follows from the LDP. To prove the second assumption, one must employ again the spectral technique, but with the auxiliary system introduced above and the observable $\hat{\varphi}$. There are mainly two difficulties : the obvious one is that the auxiliary system acts on a space whose dimension is twice the dimension of $X$, so that we have to use more complicated functional spaces. The other difficulty, less apparent, is that the asymptotic variance of $\hat{\varphi}$ has to be twice the asymptotic variance of $\varphi$. We do not see any reason for this to be true in full generality. In the particular case where all maps $T_{\omega}$ preserve the measure $\mu$, this can be proved using Green-Kubo formula from Proposition \ref{green_kubo} : assuming that the auxiliary system is mixing and has a spectral gap on an appropriated Banach space, the stationary measure is then given by $\mu \otimes \mu$ (since it is preserved by all maps $\hat{T}_{\omega}$), and an algebraic manipulation using Proposition \ref{green_kubo} shows that the asymptotic variance of $\hat{\varphi}$ is given by $2 \sigma^2$.  See \cite{ALS} for a similar computation.

In the general situation, the stationary measure of the auxiliary measure can be different from $\mu \otimes \mu$, and it seems hard to compute the asymptotic variance of $\hat{\varphi}$ from Green-Kubo formula. Even though this condition can seem unnatural, it is also necessary in order for the quenched central limit theorem to be true in the form we have stated it, as can be seen from the proof of \cite{ALS}. We state this as a lemma :

\begin{lemma}

Using the same notations introduced above, assume that there exists  $\sigma^2 >0$ and $\hat{\sigma}^2 > 0$ such that 

\begin{enumerate}

\item $\frac{S_n}{\sqrt{n}}$ converges in law to $\mathcal{N}(0, \sigma^{2})$ under the probability $\tilde{\mathbb{P}} \otimes \mu$,

\item $\frac{\hat{S}_n}{\sqrt{n}}$ converges in law to  $\mathcal{N}(0, \hat{\sigma}^{2})$ under the probability $\tilde{\mathbb{P}} \otimes (\mu \otimes \mu)$,

\item for a.e. $\underline{\omega}$, $\frac{1}{\sqrt{n}} \sum_{k=0}^{n-1} \varphi \circ T_{\underline{\omega}}^k$ converges in law to $\mathcal{N}(0, \sigma^{2})$ under the probability $\mu$.

\end{enumerate}

Then $\hat{\sigma}^2 = 2 \sigma^2$.

\end{lemma}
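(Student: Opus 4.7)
The plan is to exploit the product structure of $\hat{T}_{\underline{\omega}}$: the auxiliary skew product acts on $X^2$ using the \emph{same} realization $\underline{\omega}$ for both coordinates, which means that for any fixed $\underline{\omega}$, the two Birkhoff sums appearing in $\hat{S}_n$ are independent random variables on $(X^2,\mu\otimes\mu)$. Writing $S_n(\underline{\omega},x)=\sum_{k=0}^{n-1}\varphi(T_{\underline{\omega}}^k x)$, the very definition of $\hat{\varphi}$ gives the pointwise identity
\begin{equation*}
\hat{S}_n(\underline{\omega},x,y)\;=\;S_n(\underline{\omega},x)-S_n(\underline{\omega},y).
\end{equation*}

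First I would fix $\underline{\omega}$ in the full-measure set provided by assumption (3). Under $\mu\otimes\mu$, the maps $(x,y)\mapsto S_n(\underline{\omega},x)$ and $(x,y)\mapsto S_n(\underline{\omega},y)$ depend on disjoint groups of coordinates and are therefore independent, each with marginal distribution equal to the law of $S_n(\underline{\omega},\cdot)$ under $\mu$. By hypothesis (3), both marginals converge in law to $\mathcal{N}(0,\sigma^2)$ once rescaled by $\sqrt{n}$. Since the characteristic function of an independent difference factorizes, for every $t\in\mathbb{R}$ and every such $\underline{\omega}$,
\begin{equation*}
\mathbb{E}_{\mu\otimes\mu}\!\left(e^{i\frac{t}{\sqrt{n}}\hat{S}_n(\underline{\omega},\cdot,\cdot)}\right)
\;=\;\mathbb{E}_{\mu}\!\left(e^{i\frac{t}{\sqrt{n}}S_n(\underline{\omega},\cdot)}\right)\,\overline{\mathbb{E}_{\mu}\!\left(e^{i\frac{t}{\sqrt{n}}S_n(\underline{\omega},\cdot)}\right)}
\;\xrightarrow[n\to\infty]{}\;e^{-\frac{t^2\sigma^2}{2}}\,e^{-\frac{t^2\sigma^2}{2}}\;=\;e^{-t^2\sigma^2}.
\end{equation*}
By Lévy's continuity theorem, $\hat{S}_n/\sqrt{n}$ converges in law to $\mathcal{N}(0,2\sigma^2)$ under $\mu\otimes\mu$, for $\tilde{\mathbb{P}}$-a.e.\ $\underline{\omega}$.

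Next I would lift this quenched statement to the annealed one. The above characteristic functions are uniformly bounded by $1$, so dominated convergence in $\underline{\omega}$ yields
\begin{equation*}
\mathbb{E}_{\tilde{\mathbb{P}}\otimes(\mu\otimes\mu)}\!\left(e^{i\frac{t}{\sqrt{n}}\hat{S}_n}\right)
\;=\;\int_{\tilde{\Omega}}\mathbb{E}_{\mu\otimes\mu}\!\left(e^{i\frac{t}{\sqrt{n}}\hat{S}_n(\underline{\omega},\cdot,\cdot)}\right)d\tilde{\mathbb{P}}(\underline{\omega})\;\xrightarrow[n\to\infty]{}\;e^{-t^2\sigma^2}.
\end{equation*}
Combined with assumption (2), which asserts that the same sequence of characteristic functions converges to $e^{-t^2\hat{\sigma}^2/2}$, the uniqueness of the pointwise limit of characteristic functions forces $e^{-t^2\sigma^2}=e^{-t^2\hat{\sigma}^2/2}$ for every $t\in\mathbb{R}$, hence $\hat{\sigma}^2=2\sigma^2$.

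There is no real obstacle: once one notices the independence of the two coordinate processes on $X^2$ for a fixed $\underline{\omega}$, the argument is just the standard fact that the law of a difference of two independent copies is the convolution of one copy with the reflection of the other, combined with bounded convergence to upgrade the quenched convergence to the annealed one. The only mildly subtle point is that one genuinely uses hypothesis (3) and not merely hypothesis (1) here, because without quenched convergence one cannot factorize the characteristic function of $\hat{S}_n$ at fixed $\underline{\omega}$.
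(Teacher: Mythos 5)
Your argument is correct, and it reaches the conclusion by a slightly different and in fact more economical route than the paper. Both proofs rest on the same key observation, namely that for fixed $\underline{\omega}$ one has $\hat{S}_n(\underline{\omega},x,y)=S_n(\underline{\omega},x)-S_n(\underline{\omega},y)$ with the two terms independent under $\mu\otimes\mu$, so that $\mathbb{E}_{\mu\otimes\mu}\bigl(e^{i\frac{t}{\sqrt{n}}\hat{S}_n(\underline{\omega},\cdot,\cdot)}\bigr)=\bigl|\mathbb{E}_{\mu}\bigl(e^{i\frac{t}{\sqrt{n}}S_n(\underline{\omega},\cdot)}\bigr)\bigr|^2$. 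You then feed the quenched hypothesis (3) into this identity and integrate in $\underline{\omega}$ by dominated convergence, obtaining the limit $e^{-t^2\sigma^2}$ of the annealed characteristic function of $\hat{S}_n/\sqrt{n}$, which you compare with hypothesis (2); note that you never use hypothesis (1), so your proof shows the conclusion already follows from (2) and (3) alone. The paper instead expands the quantity $\mathbb{E}_{\tilde{\mathbb{P}}}\bigl(\bigl|\mu(e^{i\frac{t}{\sqrt{n}}S_{n,\underline{\omega}}})-e^{-\frac{\sigma^2t^2}{2}}\bigr|^2\bigr)$: the squared modulus term is rewritten via the same factorization as the annealed characteristic function of $\hat{S}_n$, hypotheses (1) and (2) identify the limit of the expansion as $e^{-\frac{t^2\hat{\sigma}^2}{2}}-e^{-t^2\sigma^2}$, and hypothesis (3) with dominated convergence forces the left-hand side to vanish. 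What the paper's formulation buys is transparency about the mechanism of \cite{ALS}: the expanded second moment is exactly the quantity controlled in the proof of the quenched CLT, so running the identity backwards exhibits $\hat{\sigma}^2=2\sigma^2$ as a necessary compatibility condition for that method; what your version buys is brevity and the observation that the annealed CLT for $S_n$ is not needed for this particular implication.
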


\begin{proof}

Define $S_{n,{\underline{\omega}}} = S_n(\underline{\omega}, .) = \sum_{k=0}^{n-1} \varphi \circ T_{\underline{\omega}}^k$. Following \cite{ALS}, we write for any $t \in \mathbb{R}$ and $n \ge 1$ :

\[
\begin{aligned}
\mathbb{E}_{\tilde{\mathbb{P}}} \left(\left|\mu(e^{i \frac{t}{\sqrt{n}} S_{n,\underline{\omega}}}) - e^{- \frac{\sigma^{2} t^2}{2}}\right|^2 \right) = \mathbb{E}_{\tilde{\mathbb{P}}}\left( \left|\mu(e^{i \frac{t}{\sqrt{n}} S_n}) \right|^2\right) - e^{-t^2 \sigma^2} + 2 e^{- \frac{\sigma^2 t^2}{2}} \Re \left(e^{- \frac{\sigma^2 t^2}{2}} - \mathbb{E}_{\tilde{\mathbb{P}} \otimes \mu}(e^{i \frac{t}{\sqrt{n}} S_n})\right)
\\
= \mathbb{E}_{\tilde{\mathbb{P}}} \left( \mu \otimes \mu( e^{i \frac{t}{\sqrt{n}} \hat{S}_n})\right) - e^{- \frac{t^2 \hat{\sigma}^2}{2}} + \left(  e^{- \frac{t^2 \hat{\sigma}^2}{2}} - e^{- t^2 \sigma^2} \right) + 2 e^{- \frac{\sigma^2 t^2}{2}} \Re \left(e^{- \frac{\sigma^2 t^2}{2}} - \mathbb{E}_{\tilde{\mathbb{P}} \otimes \mu}(e^{i \frac{t}{\sqrt{n}} S_n})\right).
\end{aligned}
\]
By the two first assumptions, this term goes to $ e^{- \frac{t^2 \hat{\sigma}^2}{2}} - e^{-t^2 \sigma^2}$ as $n$ goes to infinity. But $\mathbb{E}_{\tilde{\mathbb{P}}} \left(\left|\mu(e^{i \frac{t}{\sqrt{n}}  S_{n,\underline{\omega}}}) - e^{- \frac{\sigma^{2} t^2}{2}}\right|^2 \right)$ goes to $0$ thanks to the third assumption and the dominated convergence theorem. This shows that $e^{- \frac{t^2 \hat{\sigma}^2}{2}} = e^{-t^2 \sigma^2}$. \qed
\end{proof}

In the following, we will consider the situation where $X = [0,1]$ and all maps preserve the Lebesgue measure $m$. For technical convenience, we will also assume that $\Omega$ is a finite set.

\begin{example} Suppose that all maps $T_{\omega}$ are given by $T_{\omega} x = \beta_{\omega} x ~ {\rm mod ~ 1}$, where $\beta_{\omega} > 1$ is an integer. The transfer operator of this system clearly satisfies a Lasota-Yorke on the space ${\rm BV}$, and is random-covering, so that assumption (1) and (3) follows automatically for any $\varphi \in {\rm BV}$. On the other hand, the auxiliary two-dimensional system has a spectral gap on the quasi-H\"{o}lder space $V_1(X^2)$ and is also random covering. Since $\hat{\varphi}$ belongs to $V_1(X^2)$, assumption (2) follows by the above discussion and the quenched CLT holds.
\end{example}

\begin{example}
There exist piecewise non-linear expanding maps which preserves Lebesgue. Such a class of examples is provided by the Lorenz-like maps considered in the paper \cite{CHMV} : these maps have both a neutral parabolic fixed point and a point where the derivative goes to infinity. The coexistence of these two behaviors allows the possibility for the map to preserve Lebesgue measure while being non-linear.
Suppose that $\Omega = \{0,1\}$, $T_0$ is the doubling map $T_0 x = 2x ~ {\rm mod} ~ 1$, and that $T_1$ is one of the maps considered in \cite{CHMV}.
We will prove that there exists $0 \le p^{\star} < 1$ such that if $T_0$ is iterated with probability $p$ with $p > p_{\star}$, then  the quenched CLT holds for any observable $\varphi$ Lipschitz.

Since the annealed transfer operator $P$ can be written as $p P_0 + (1-p) P_1$, where $P_0$, resp. $P_1$, is the transfer operator of $T_0$, resp. $T_1$ (and similarly $\hat{P} = p \hat{P}_0 + (1-p) \hat{P}_1$ for the auxiliary system), it is sufficient to prove that $P_0$ and $\hat{P}_0$ have a spectral gap on Banach spaces $\mathcal{B}$ and $\hat{\mathcal{B}}$, while $P_1$ and $\hat{P}_1$ act continuously on these spaces, and that $\varphi \in \mathcal{B}$ and $\hat{\varphi} \in \hat{\mathcal{B}}$. We will use quasi-H\"{o}lder spaces and will take $\mathcal{B} = V_{\alpha}(X)$ and $\hat{\mathcal{B}} = V_{\alpha}(X^2)$ for a convenient choice of $\alpha$. Clearly, the transfer operator of $T_0$ and $\hat{T}_0$ have a spectral gap on these spaces, and $\varphi$ (resp. $\hat{\varphi}$) belongs to $\mathcal{B}$ (resp. $\hat{\mathcal{B}}$) whenever $\varphi$ is Lipschitz. To prove the continuity of $P_1$ and $\hat{P}_1$, we will use the following general result.

\begin{proposition} \label{cont_qh}
Let $M$ be a compact subset of $\mathbb{R}^d$ with $m_d(M) =1$, where $m_d$ denotes the Lebesgue measure on $\mathbb{R}^d$, and $T : M \to M$ be a non-singular map. Define $g(x) = \frac{1}{| {\rm det} DT(x)|}$, and assume there exist a finite family of disjoint open set $\{U_i\}_i$ included in $M$, a constant $C > 0$ and $0 < \alpha \le 1$ with

\begin{enumerate}

\item $m_d(\cup_i U_i) = 1$,

\item $T : U_i \to TU_i$ is a $C^1$-diffeomorphism,

\item $d(Tx,Ty) \ge d(x,y)$ for all $i$ and all $x,y \in U_i$,

\item $|g(x) - g(y) | \le C d(x,y)^{\alpha}$, for all $i$ and all $x,y \in U_i$,

\item $m_d(B_{\epsilon}(\partial TU_i) ) \le C \epsilon^{\alpha}$ for all $i$ and all $\epsilon > 0$.

\end{enumerate}

Then the transfer operator of $T$ acts continuously on $V_{\alpha}(M)$.

\end{proposition}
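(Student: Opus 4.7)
The plan is to mimic the proof by Saussol of his Lasota-Yorke inequality in \cite[Lemma 4.1]{S}, taking advantage of the fact that for continuity we only need $\|P_T f\|_\alpha \le C \|f\|_\alpha$, without requiring that the contraction factor be strictly less than one. Accordingly, the target inequality will be
$$|P_T f|_\alpha \le C_1 |f|_\alpha + C_2 \|f\|_{L^1_m},$$
which, combined with the standard $L^1$-contraction $\|P_T f\|_{L^1_m} \le \|f\|_{L^1_m}$ (a direct consequence of non-singularity), yields continuity of $P_T$ on $V_\alpha(M)$.

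First I would decompose $P_T f = \sum_i P_i f$ with $P_i f(x) := \mathds{1}_{TU_i}(x) f(T_i^{-1}x) g(T_i^{-1}x)$, where $T_i := T|_{U_i}$ is the diffeomorphism from assumption (2). Subadditivity of the oscillation reduces the problem to bounding $\int \mbox{osc}(P_i f, B_\epsilon(x))\, dx$ for each $i$ and each $\epsilon \le \epsilon_0$. For a fixed $i$, I would split the integration domain into the interior $\{x : B_\epsilon(x) \subset TU_i\}$, the exterior $\{x : B_\epsilon(x) \cap TU_i = \emptyset\}$ (which contributes zero), and the boundary layer $\{x : B_\epsilon(x) \cap \partial TU_i \neq \emptyset\}$. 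On the interior, the non-expansion assumption (3) gives $T_i^{-1}(B_\epsilon(x)) \subset B_\epsilon(T_i^{-1}x) \cap U_i$, and the product rule for oscillation applied to $(f \circ T_i^{-1})(g \circ T_i^{-1})$, together with the H\"older regularity of $g$ from assumption (4), yields a bound with two kinds of terms: $g(T_i^{-1}x) \cdot \mbox{osc}(f, B_\epsilon(T_i^{-1}x) \cap U_i)$ and $\epsilon^\alpha \cdot |f(T_i^{-1}x)|$, up to bounded multiplicative factors. Performing the change of variables $y = T_i^{-1}x$ brings in a Jacobian $|\det DT(y)| = g(y)^{-1}$, which cancels the $g$-weight in the first family of terms, while the second produces an $L^1$-type contribution.

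The boundary layer has Lebesgue measure $\le C\epsilon^\alpha$ by assumption (5), so crude bounds on $\mbox{osc}(P_i f, B_\epsilon(x))$ suffice there to obtain a contribution of order $\epsilon^\alpha$ times an appropriate norm of $f$. The main technical obstacle will be handling the factor $\mathrm{ess\,sup}_A |f \circ T_i^{-1}|$ that appears in the product rule, since $V_\alpha(M)$ is not contained in $L^\infty$ a priori. The standard workaround, which I would follow, is to use the pointwise bound $\mathrm{ess\,sup}_A |f| \le \mathrm{ess\,inf}_A |f| + \mbox{osc}(f, A)$ and then control $\int_{U_i} \mathrm{ess\,inf}_{B_\epsilon(y)} |f|\, dy$ by a constant multiple of $\|f\|_{L^1_m}$, using the elementary inequality $\mathrm{ess\,inf}_{B_\epsilon(y)} |f| \le m_d(B_\epsilon(y))^{-1} \int_{B_\epsilon(y)} |f|\, dm_d$ and a Fubini exchange of integrations. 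Summing the resulting estimates over the finitely many branches yields the target inequality, and hence continuity of $P_T$ on $V_\alpha(M)$.
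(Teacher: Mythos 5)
Your overall skeleton (branch decomposition, interior/boundary splitting, Saussol's product rule for the oscillation, change of variables to cancel the weight $g$) coincides with the paper's, and your treatment of the principal interior term $g(T_i^{-1}x)\,\mathrm{osc}\bigl(f,B_\epsilon(T_i^{-1}x)\bigr)$ is correct. The gap is in how you dispose of the error terms. Under hypothesis (4) the H\"older control of $g$ is \emph{additive}, not multiplicative as in Saussol's distortion condition: you only get $\mathrm{osc}(g,T_i^{-1}B_\epsilon(x))\le C\epsilon^{\alpha}$ and $\mathrm{ess\,sup}\,g\le g(T_i^{-1}x)+C\epsilon^{\alpha}$, so the resulting error terms --- of the type $\epsilon^{\alpha}\,\mathrm{ess\,inf}_{T_i^{-1}B_\epsilon(x)}|f|$ and $\epsilon^{\alpha}\,\mathrm{osc}(f,T_i^{-1}B_\epsilon(x))$ on the interior, and $\mathrm{ess\,sup}_{T_i^{-1}(B_\epsilon(x)\cap TU_i)}|fg|$ on the boundary layer --- carry \emph{no} factor $g(T_i^{-1}x)$. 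Consequently your change of variables $y=T_i^{-1}x$ does not produce ``an $L^1$-type contribution'': it produces $\epsilon^{\alpha}\int_{U_i}|f(y)|\,|\mathrm{det}\,DT(y)|\,dy$ (and similarly for the oscillation term), and nothing in the hypotheses bounds $|\mathrm{det}\,DT|=1/g$: assumption (3) only gives $g\le 1$, and in the intended application (the maps of \cite{CHMV}, whose derivative blows up) $|\mathrm{det}\,DT|$ is genuinely unbounded. The variant of your trick that stays on the image side, bounding $\mathrm{ess\,inf}_{A}|f|$ by the average $m_d(A)^{-1}\int_A|f|\,dm_d$ with $A=T_i^{-1}B_\epsilon(x)$, hits the same obstruction, since $m_d(T_i^{-1}B_\epsilon(x))$ admits no lower bound of order $\epsilon^d$ when $g$ is not bounded below. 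In Saussol's Lemma 4.1 these steps work only because his distortion hypothesis is relative, $|\mathrm{det}\,DT(x)-\mathrm{det}\,DT(y)|\le c\,|\mathrm{det}\,DT(x)|\,d(Tx,Ty)^{\alpha}$, so every $\epsilon^{\alpha}$ error automatically comes multiplied by $g$ and the Jacobian cancels; that is precisely what is lost under the weaker hypothesis (4), which was chosen so as to cover the \cite{CHMV} maps.

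Moreover, the premise that forces you into this detour is mistaken: the quasi-H\"older space \emph{is} continuously embedded in $L^{\infty}(m)$ (a standard property proved in \cite{S}; the paper itself recalls it when contrasting $V_\alpha$ with multidimensional BV). The paper's proof exploits exactly this: every $\epsilon^{\alpha}$ error term on the interior is bounded by $C\epsilon^{\alpha}\|f\|_{\rm sup}$, and the boundary-layer term is bounded by $\|g\|_{\rm sup}\|f\|_{\rm sup}\mathds{1}_{B_\epsilon(\partial TU_i)}(x)$, whose integral is $\le C\epsilon^{\alpha}\|f\|_{\rm sup}$ by hypothesis (5) and the finiteness of the family $\{U_i\}$; since $\|f\|_{\rm sup}\le C\|f\|_{\alpha}$, this yields $|P_Tf|_{\alpha}\le C\|f\|_{\alpha}$, which together with $\|P_Tf\|_{L^1_m}\le\|f\|_{L^1_m}$ gives continuity. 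So the repair is not a refinement of your Fubini argument but simply to use the $L^{\infty}$ embedding you set out to avoid.
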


The map with parameter $\gamma > 1$ considered in \cite{CHMV} satisfies these assumptions for $\alpha = \min\{1, \gamma - 1\}$, so that the quenched CLT holds when $p^{\star}$ is close enough to $1$.

\begin{proof}[Proof of Proposition \ref{cont_qh}]

We denote by $T_i^{-1} : TU_i \to U_i$ the inverse branch of $T$ restricted to $U_i$.

The transfer operator $P$ of $T$ reads as $$Pf(x) = \sum_i (gf) \circ T_i^{-1} \mathds{1}_{TU_i}(x).$$ Following Saussol \cite{S}, we have for all $\epsilon > 0$ and $x \in \mathbb{R}^d$ : $$\mbox{osc}(Pf, B_{\epsilon}(x)) \le \sum_i R_i^{(1)}(x) \mathds{1}_{TU_i}(x) + 2 \sum_i R_i^{(2)}(x),$$ where $R_i^{(1)}(x) = \mbox{osc}(gf, T^{-1} B_{\epsilon}(x) \cap U_i)$ and $R_i^{(2)}(x) = \left(\underset{T^{-1} B_{\epsilon}(x) \cap U_i} {\rm ess ~ sup} |gf|\right) \mathds{1}_{B_{\epsilon}(\partial TU_i)}(x)$.

Using Proposition 3.2 (iii) in \cite{S}, we get $$R_i^{(1)}(x) \le \mbox{osc}(f, T^{-1} B_{\epsilon}(x) \cap U_i) \underset{T^{-1} B_{\epsilon}(x) \cap U_i}{\rm ess ~ sup} g + \mbox{osc}(g, T^{-1} B_{\epsilon}(x) \cap U_i) \underset{T^{-1} B_{\epsilon}(x) \cap U_i} {\rm ess ~ inf} |f|.$$

By assumption (3), we have $T^{-1}B_{\epsilon}(x) \cap U_i \subset B_{\epsilon}(T_i^{-1} x)$, while by assumption (4), $\mbox{osc}(g, T^{-1}B_{\epsilon}(x) \cap U_i) \le C \epsilon^{\alpha}$ and $\underset{T^{-1} B_{\epsilon}(x) \cap U_i}{\rm ess ~ sup} g \le g(T_i^{-1} x) + C \epsilon^{\alpha}$.

This shows $R_i^{(1)}(x) \le g(T_i^{-1} x) \mbox{osc}(f, B_{\epsilon}(T_i ^{-1} x) + C \epsilon^{\alpha} \|f \|_{\rm sup}$, whence $$\int \sum_i R_i^{(1)}(x) \mathds{1}_{TU_i}(x) dx \le \int P( \mbox{osc}(f, B_{\epsilon}(.))(x) dx + C \|f \|_{\infty} \epsilon^{\alpha} \sum_i m_d(TU_i).$$ Since the sum is finite, this gives $\int \sum_i R_i^{(1)}(x) \mathds{1}_{TU_i}(x) dx \le \epsilon^{\alpha} \left(|f|_{\alpha} + C \|f\|_{\rm sup}\right) \le C \epsilon^{\alpha} \|f \|_{\alpha}$.

We turn now to the estimate of $R_i^{(2)}$ : one has $R_i^{(2)}(x) \le \|g \|_{\rm sup} \|f \|_{\rm sup} \mathds{1}_{B_{\epsilon }(\partial TU_i)}(x)$, so that using assumption (4), $\int \sum_i R_i^{(2)} dx \le C \| f\|_{\rm sup} \sum_i m_d(B_{\epsilon}(\partial TU_i)) \le C \epsilon^{\alpha} \|f \|_{\rm sup}$. This shows that $|Pf|_{\alpha} \le C \|f\|_{\alpha}$ and concludes the proof. \qed

\end{proof}

\end{example}

\section{Concentration inequalities} \label{concentration}

A function $K : X^n \to \mathbb{R}$, where $(X,d)$ is a metric space, is separately Lipschitz if, for all $i$, there exists a constant $\lip_i(K)$ with $$ \left| K(x_0, \ldots, x_{i-1}, x_i, x_{i+1}, \ldots, x_{n-1}) - K(x_0, \ldots, x_{i-1}, x_i', x_{i+1}, \ldots, x_{n-1}) \right| \le \lip_i(K) d(x_i, x_i')$$ for all points $x_0, \ldots, x_{n-1}, x_i'$ in $X$.

Let $(\Omega, \mathbb{P}, T)$ be a finite random Lasota-Yorke system on the unit interval $X = [0,1]$, such that $\lambda(T_{\omega}) > 1$ for all $\omega \in \Omega$. We assume that $(\Omega, \mathbb{P}, T)$ satisfies the random covering property, and we denote by $\mu$ its unique absolutely continuous stationary measure. Its density $h$ belongs to $BV$, and is uniformly bounded away from $0$.

\begin{theorem}  \label{exp_CI}
There exists a constant $C \ge 0$, depending only on $(\Omega, \mathbb{P}, T)$, such that for any $n \ge 1$ and any separately Lipschitz function $K : X^n \to \mathbb{R}$, one has $$\mathbb{E}_{\mu \otimes \tilde{\mathbb{P}}}\left( e^{K(x, T_{\underline{\omega}}^1 x, \ldots, T_{\underline{\omega}}^{n-1} x) - \mathbb{E}_{\mu \otimes \tilde{\mathbb{P}}}\left( K(x, T_{\underline{\omega}}^1 x, \ldots, T_{\underline{\omega}}^{n-1} x)\right)}\right) \le e^{C \sum_{i=0}^{n-1} \lip_i^2(K)}$$
\end{theorem}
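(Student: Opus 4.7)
The plan is to adapt the martingale strategy of Collet, Mart\'inez and Schmitt~\cite{CMS} to the annealed random setting, by exploiting the symbolic Markov chain representation $(\Omega^{\star}, \tau, \mu_c)$ constructed in Section~\ref{clt_martingale}. The isomorphism $\Phi : \tilde{\Omega}\times X \to \Omega^{\star}$ pushes $\tilde{\mathbb{P}}\otimes\mu$ onto $\mu_c$ and satisfies $\pi_k\circ\Phi(\underline{\omega},x) = T_{\underline{\omega}}^k x$, so setting $\widetilde K(x_0,x_1,\ldots) := K(x_0,\ldots,x_{n-1})$, the Laplace functional in the statement equals $\mathbb{E}_{\mu_c}\bigl(\exp(\widetilde K - \mathbb{E}_{\mu_c}\widetilde K)\bigr)$. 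The problem is thereby reduced to proving exponential concentration for a separately Lipschitz functional of the first $n$ coordinates of the stationary Markov chain on $\Omega^{\star}$.

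I would then perform a reverse martingale decomposition along the decreasing filtration $\mathcal{F}_i := \sigma(x_i, x_{i+1},\ldots,x_{n-1})$, $0 \le i \le n$, with $\mathcal{F}_n$ trivial: write $\widetilde K - \mathbb{E}\widetilde K = \sum_{i=0}^{n-1} D_i$ with $D_i := \mathbb{E}(\widetilde K\mid \mathcal{F}_i) - \mathbb{E}(\widetilde K\mid \mathcal{F}_{i+1})$. By the Markov property, $\mathbb{E}(\widetilde K\mid \mathcal{F}_i)(x_i,\ldots,x_{n-1}) = \mathcal{K}_i(x_i,\ldots,x_{n-1})$, where
\[
\mathcal{K}_i(y_i,y_{i+1},\ldots,y_{n-1}) = \int K(z_0,\ldots,z_{i-1},y_i,y_{i+1},\ldots,y_{n-1})\,d\lambda_i(z\mid y_i)
\]
and $\lambda_i(\cdot\mid y)$ is the law of $(x_0,\ldots,x_{i-1})$ conditioned on $x_i = y$ under $\mu_c$. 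This is the law of a time-reversed Markov chain of length $i$ starting from $y$, whose one-step transition operator $\widehat U f = h^{-1}P(hf)$ inherits a spectral gap from $P$ (the hypothesis $\inf h > 0$ together with $h \in {\rm BV}$ makes multiplication and division by $h$ continuous on ${\rm BV}$).

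The heart of the proof is the pointwise estimate
\[
\|D_i\|_\infty \;\le\; C\Bigl(\lip_i(K) + \sum_{j=0}^{i-1}\rho^{\,i-j}\lip_j(K)\Bigr), \qquad \rho := \max_{\omega\in\Omega}\lambda(T_\omega)^{-1} < 1.
\]
To establish it, I write $D_i(x_i,\ldots) = \mathcal{K}_i(x_i,x_{i+1},\ldots) - \int \mathcal{K}_i(z,x_{i+1},\ldots)\,\nu(dz\mid x_{i+1})$, where $\nu(\cdot\mid x_{i+1})$ is the one-step reverse transition, and bound $|D_i|$ by the diameter of $[0,1]$ times the Lipschitz constant of $\mathcal{K}_i$ in its first variable. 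The latter is controlled by coupling $\lambda_i(\cdot\mid y)$ and $\lambda_i(\cdot\mid y')$: for a matched random sequence $(\omega_1,\ldots,\omega_i)$ and matched choices of inverse branches, the coupled reverse trajectories satisfy $|z_{i-k}^y - z_{i-k}^{y'}| \le \rho^k |y-y'|$ by uniform expansion. Averaging against this coupling and using coordinate-wise Lipschitzness of $K$ produces the direct $\lip_i(K)$ term plus the geometrically decaying contributions $\rho^{\,i-j}\lip_j(K)$ from the past.

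Squaring the pointwise bound and using a Cauchy--Schwarz rearrangement to exchange the double sum yields $\sum_{i=0}^{n-1}\|D_i\|_\infty^2 \le C'\sum_{i=0}^{n-1}\lip_i^2(K)$. The Azuma--Hoeffding exponential inequality for bounded (reverse) martingale differences then gives
\[
\mathbb{E}_{\mu_c}\Bigl(\exp\sum_{i=0}^{n-1}D_i\Bigr) \le \exp\Bigl(\tfrac12\sum_{i=0}^{n-1}\|D_i\|_\infty^2\Bigr) \le \exp\Bigl(C\sum_{i=0}^{n-1}\lip_i^2(K)\Bigr),
\]
which is the desired conclusion. The main obstacle is the reverse-chain coupling: the deterministic CMS argument selects preimages under a single expanding map, whereas here one must couple jointly across the random choice of $\omega$ and the inverse-branch selection while controlling the distortion of the weights, which are density-corrected ratios involving $h$. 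This is precisely where the hypotheses $\lambda(T_\omega)>1$ for every $\omega$, $\inf h > 0$, and the random covering property all play an essential role, ensuring simultaneously the contraction of coupled trajectories and the comparability of selection probabilities at $y$ versus $y'$.
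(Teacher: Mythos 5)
Your overall architecture matches the paper's: reduce to the symbolic Markov chain $(\Omega^{\star},\tau,\mu_c)$ via $\Phi$, decompose $K-\mathbb{E}(K)$ into reverse martingale differences $D_i$ along the decreasing filtration, bound $\|D_i\|_\infty$ by $C\bigl(\lip_i(K)+\sum_{j<i}\rho^{i-j}\lip_j(K)\bigr)$, and conclude by Azuma--Hoeffding plus a Cauchy--Schwarz rearrangement. (Your finite filtration with $\mathcal{F}_n$ declared trivial is a legitimate small simplification: it bypasses the exactness/Doob argument the paper uses to sum infinitely many increments, and it is consistent because, by the reverse Markov property, conditioning on $\sigma(x_i,\dots,x_{n-1})$ and on $\sigma(x_i,x_{i+1},\dots)$ give the same conditional expectation of $K$.)

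The genuine gap is in the only hard step, the sup-norm bound on $D_i$. You propose to get it from a pathwise coupling of the backward conditional laws $\lambda_i(\cdot\mid y)$ and $\lambda_i(\cdot\mid y')$ with matched $(\omega,\text{inverse branch})$ choices, which would make $\mathcal{K}_i$ Lipschitz in its first variable with constant $\lesssim \lip_i(K)+\sum_j\rho^{i-j}\lip_j(K)$. For the piecewise expanding maps of this section this mechanism fails: as $y$ crosses the image of a partition endpoint of some $T^i_{\underline{\omega}}$, preimages appear or disappear, and the selection weights $p_{\underline{\omega}}^i\,h(z)/\bigl(h(y)\,|(T^i_{\underline{\omega}})'(z)|\bigr)$ are only of bounded variation (h is in ${\rm BV}$, not Lipschitz), so $y\mapsto\mathcal{K}_i(y,x_{i+1},\dots)$ is in general discontinuous, and the one-step backward kernels at nearby $y,y'$ are \emph{not} within $O(|y-y'|)$ in total variation; no matched-branch coupling with error $O(|y-y'|)$ exists. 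Consequently the pointwise bound $|D_i|\le \operatorname{diam}(X)\cdot\mathrm{Lip}_1(\mathcal{K}_i)$ cannot be justified this way. The paper replaces this pointwise control by an averaged one: it writes $K_p(x_p,\dots)=K(x_\star,\dots)+\sum_{i<p}L^{p-i}f_i(x_p)$ with $f_i$ built from the telescoped increments $H_i$, invokes the spectral gap of the annealed transfer operator $L$ on ${\rm BV}$ to get $\|L^{p-i}f_i-\int f_i\,d\mu\|_{\rm BV}\le C\rho^{p-i}\|f_i\|_{\rm BV}$, and then spends the bulk of the proof bounding $\|f_i\|_{\rm BV}$ (uniform expansion, bounded distortion, and the branch-sum estimates of Lemma \ref{conc_lem}, plus the four-term variation estimate). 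That ${\rm BV}$ bookkeeping is precisely the substitute for the coupling you assume, and it is missing from your argument; as written, your proof would only go through for smooth full-branch expanding maps with Lipschitz invariant density, not for the random Lasota--Yorke systems of Theorem \ref{exp_CI}.
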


This leads to a large deviation estimate, namely that for all $t > 0$, one has $$\mu \otimes \tilde{\mathbb{P}} \left( \{ (x, \underline{\omega}) \, / \, K(x, T_{\underline{\omega}}^1 x, \ldots, T_{\underline{\omega}}^{n-1} x) - m > t \} \right) \le e^{- \frac{t^2}{4C \sum_{i=0}^{n-1} \lip_i^2(K)}},$$ where $m = \mathbb{E}_{\mu \otimes \tilde{\mathbb{P}}}\left( K(x, T_{\underline{\omega}}^1 x, \ldots, T_{\underline{\omega}}^{n-1} x)\right)$.

For the proof, we will use McDiarmid's bounded differences method \cite{McDi1, McDi2}, as in \cite{CG} and \cite{CMS}, conveniently adapted to the random context.

We will denote by $P$ the annealed transfer operator with respect to the Lebesgue measure $m$, and by $L$ the annealed tranfer operator with respect to the stationary measure $\mu$. Recall that $L$ acts on functions in $L^1(\mu)$ by $L(f) = \frac{P(fh)}{h}$, whence $$Lf(x) = \sum_{\omega \in \Omega} p_{\omega} \sum_{T_{\omega}y = x} \frac{h(y)f(y)}{h(x) |T_{\omega}'(y)|}.$$ Since $h$ belongs to $BV$, together with $\frac{1}{h}$, $L$ acts on BV and has a spectral gap.

Recall the construction of the symbolic system $(X^{\mathbb{N}}, \mathcal{F}, \sigma, \mu_c)$ and of the decreasing filtration $\{\mathcal{F}_p\}_{p \ge 0}$ of $\sigma$-algebras. We extend $K$ as a function on $X^{\mathbb{N}}$, depending only on the $n$ first coordinates. One has obviously $\mathbb{E}_{\mu_c}(K) = \mathbb{E}_{\mu \otimes \tilde{\mathbb{P}}}\left( K(x, T_{\underline{\omega}}^1 x, \ldots, T_{\underline{\omega}}^{n-1} x)\right)$ and $$\mathbb{E}_{\mu \otimes \tilde{\mathbb{P}}}\left( e^{K(x, T_{\underline{\omega}}^1 x, \ldots, T_{\underline{\omega}}^{n-1} x) - \mathbb{E}_{\mu \otimes \tilde{\mathbb{P}}}\left( K(x, T_{\underline{\omega}}^1 x, \ldots, T_{\underline{\omega}}^{n-1} x)\right)}\right) = \mathbb{E}_{\mu_c}(e^{K - \mathbb{E}_{\mu_c}(K)}),$$ since $\Phi : X \times \tilde{\Omega} \to X^{\mathbb{N}}$ is a factor map. We define $K_p = \mathbb{E}_{\mu_c}(K | \mathcal{F}_p)$, and $D_p =  K_p - K_{p+1}$. One has the following :

\begin{lemma}
The dynamical system $(X^{\mathbb{N}}, \mathcal{F}, \sigma, \mu_c)$ is exact.
\end{lemma}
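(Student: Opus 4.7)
The plan is to invoke the standard characterization of exactness: a non-invertible measure-preserving system is exact if and only if its transfer operator $\hat{T}$ satisfies $\hat{T}^n f \to \int f\, d\nu$ in $L^1(\nu)$ for every $f \in L^1(\nu)$. Since $\tilde{U}^p \tilde{P}^p f = \mathbb{E}_{\mu_c}(f \mid \mathcal{F}_p)$ and $\tilde{U}$ is an $L^1$-isometry (as $\sigma$ preserves $\mu_c$), one has $\|\tilde{P}^p f - \int f\, d\mu_c\|_{L^1(\mu_c)} = \|\mathbb{E}_{\mu_c}(f \mid \mathcal{F}_p) - \int f\, d\mu_c\|_{L^1(\mu_c)}$, and the decreasing martingale convergence theorem shows that the triviality of $\mathcal{F}_\infty = \bigcap_p \mathcal{F}_p$ is equivalent to $\tilde{P}^p f \to \int f\, d\mu_c$ in $L^1(\mu_c)$ for every $f \in L^1(\mu_c)$. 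Because $\tilde{P}$ is an $L^1$-contraction, it suffices to check this convergence on a dense subset.

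I would work with the class $\mathcal{D}$ of finite linear combinations of product cylinders $f = \phi_0(x_0)\phi_1(x_1)\cdots\phi_{k-1}(x_{k-1})$ with $\phi_j \in {\rm BV}$; density of $\mathcal{D}$ in $L^1(\mu_c)$ follows from Stone--Weierstrass (continuous functions on $[0,1]$ lie in ${\rm BV}$) together with the fact that cylinder functions are dense in $L^1(\mu_c)$. The definition of $\tilde{P}$ by duality, the factorisation $(\psi \circ \sigma)(g \circ \sigma) = (\psi g) \circ \sigma$, and Lemma \ref{Pphi} combine to give the key identity
\[
\tilde{P}\bigl((\phi \circ \pi_0)(\psi \circ \sigma)\bigr) = (L\phi \circ \pi_0) \cdot \psi, \qquad \phi \in L^1(\mu),\ \psi \in L^\infty(\mu_c).
\]
Writing $f = (\phi_0 \circ \pi_0) \cdot (\psi_1 \circ \sigma)$ with $\psi_1 = \phi_1 \circ \pi_0 \cdot \phi_2 \circ \pi_1 \cdots \phi_{k-1}\circ \pi_{k-2}$ and iterating this identity $k-1$ times collapses $f$ to a function of $x_0$ alone: $\tilde{P}^{k-1}f = \Psi \circ \pi_0$, where $\Psi \in {\rm BV}$ is defined recursively by $\Psi_0 = \phi_0$, $\Psi_j = (L\Psi_{j-1})\cdot \phi_j$, and $\Psi = \Psi_{k-1}$. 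The ${\rm BV}$-regularity uses that ${\rm BV}$ is a Banach algebra and that $L$ acts continuously on it.

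Finally, since $h, 1/h \in {\rm BV}$, multiplication by $h$ intertwines $L$ on ${\rm BV}$ with the $m$-transfer operator $P$ on ${\rm BV}$, so $L$ inherits the spectral gap of $P$; in particular $L^n\Psi \to \int \Psi\, d\mu$ in ${\rm BV}$, hence in $L^1(\mu)$. The $\sigma$-invariance of $\mu_c$ yields $\int \Psi\, d\mu = \int \tilde{P}^{k-1}f\, d\mu_c = \int f\, d\mu_c$, so
\[
\tilde{P}^p f = (L^{p-k+1}\Psi)\circ \pi_0 \longrightarrow \int f\, d\mu_c \quad \text{in } L^1(\mu_c),
\]
for every $f \in \mathcal{D}$, and contractivity together with density extends the convergence to all of $L^1(\mu_c)$. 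The only mildly tedious step is the iterated bookkeeping that produces the formula for $\Psi$; the substantive analytic input, the spectral gap of $L$ on ${\rm BV}$, is already built into the running assumptions of this section.
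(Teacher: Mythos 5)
Your argument is correct, but it is a genuinely different proof from the one in the paper. The paper disposes of the lemma in two lines: the skew product $(X\times\tilde{\Omega},S,\mu\otimes\tilde{\mathbb{P}})$ is exact by Theorem 5.1 of Morita \cite{Mor1}, and since $\Phi$ is a factor map onto $(X^{\mathbb{N}},\sigma,\mu_c)$, exactness passes to the factor (see also \cite{Mor2}). You instead prove the statement directly: using $\tilde{U}^p\tilde{P}^pf=\mathbb{E}_{\mu_c}(f\mid\mathcal{F}_p)$, the $L^1$-isometry of $\tilde{U}$ and reverse martingale convergence, triviality of $\mathcal{F}_\infty$ is reduced to $\tilde{P}^pf\to\int f\,d\mu_c$ in $L^1(\mu_c)$ for all $f$; by $L^1$-contractivity this is checked on product cylinders with $\mathrm{BV}$ factors, which the identity $\tilde{P}\bigl((\phi\circ\pi_0)(\psi\circ\sigma)\bigr)=((L\phi)\circ\pi_0)\,\psi$ (a consequence of Lemma \ref{Pphi} and duality) collapses after $k-1$ steps to a single $\mathrm{BV}$ function of $x_0$, to which the spectral gap of $L$ applies. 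What each route buys: the paper's proof is shorter but imports Morita's exactness theorem for the skew product as a black box, whereas yours is self-contained within the standing hypotheses of this section (spectral gap of $L$ on $\mathrm{BV}$, $h$ and $1/h$ in $\mathrm{BV}$) and in fact yields the stronger quantitative statement $\|\tilde{P}^pf-\int f\,d\mu_c\|_{L^1(\mu_c)}\to 0$, exponentially fast on cylinder observables, which is the mixing mechanism the rest of the section exploits anyway. One small repair: continuous functions on $[0,1]$ are \emph{not} in general of bounded variation, so the parenthetical justification of the density of your class $\mathcal{D}$ should be replaced by taking Lipschitz (or indicator-of-subinterval) factors, which still form a point-separating algebra for Stone--Weierstrass, or by the standard approximation of $L^1(\mu_c)$ by simple functions over finite-dimensional rectangles; the rest of the argument is unaffected.
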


\begin{proof} This follows from exactness of the skew-product system $(X \times \tilde{\Omega}, S, \mu \otimes \tilde{\mathbb{P}})$, see theorem 5.1 in \cite{Mor1}, and the fact that $\Phi : X \times \tilde{\Omega} \to X^{\mathbb{N}}$ is a factor map. See also theorem 4.1 in \cite{Mor2}. \qed
\end{proof}

This implies that $\mathcal{F}_{\infty} := \bigcap_{p\ge 0} \mathcal{F}_p = \bigcap_{p \ge 0} \sigma^{-p} \mathcal{F}$ is $\mu_c$-trivial, from which we deduce, by Doob's convergence theorem, that $K_p$ goes to $\mathbb{E}_{\mu_c}(K)$ $\mu_c$-as when $p$ goes to infinity, whence $K - \mathbb{E}_{\mu_c}(K) = \sum_{p \ge 0} D_p$.

From Azuma-Hoeffding's inequality (see lemma 4.1 in \cite{McDi1} and its proof for the bound of the exponential moment), we deduce that there exists some $C \ge 0$  such that for all $P \ge 0$,  $$\mathbb{E}_{\mu_c}(e^{\sum_{p=0}^P D_p}) \le e^{C \sum_{p=0}^P \sup |D_p|^2}.$$
It remains to bound $D_p$ :

\begin{proposition}  There exists $\rho < 1$ and $C \ge 0$, depending only on $(\Omega, \mathbb{P}, T)$, such that for all $p$, one has $$|D_p| \le C \sum_{j=0}^p \rho^{p-j} \lip_i(K).$$
\end{proposition}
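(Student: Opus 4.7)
The plan is to reduce $D_p$ to a single-variable transfer-operator problem, then exploit the spectral gap of the annealed transfer operator $L$ (w.r.t.~$\mu$) on $BV$, iterated along a chain segment of length $p-j$. The first ingredient is the identity
\[
D_p(\underline{x}) \;=\; \psi(x_p) \;-\; L\psi(x_{p+1}),
\]
where, for each fixed ``future'' $(x_{p+1},\ldots,x_{n-1})$,
$\psi(z) := \mathbb{E}_{\mu_c}(K(X_0,\ldots,X_{p-1},z,x_{p+1},\ldots,x_{n-1}) \mid X_p=z)$ is a function of one variable obtained by averaging $K$ over the reverse-chain conditional distribution. This follows from the Markov property of $\mu_c$ and the defining relation $\mathbb{E}(\phi(X_p)\mid \mathcal{F}_{p+1}) = L\phi(X_{p+1})$ already used to construct the reverse martingale. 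Since $L$ preserves means w.r.t.~$\mu$ and is continuous on $BV$, while $BV\hookrightarrow L^\infty$,
\[
|D_p(\underline{x})| \;\le\; C\,\bigl\|\psi - \textstyle\int\psi\,d\mu\bigr\|_{BV} \;\le\; C'\operatorname{Var}(\psi),
\]
so everything reduces to controlling $\operatorname{Var}(\psi)$.

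Next I would decompose $K$ telescopingly. Fix a reference point $\underline{y}^0\in X^n$ and set, for $j=0,\ldots,n-1$,
\[
K^{(j)}(y_j,\ldots,y_{n-1}) := K(y_0^0,\ldots,y_{j-1}^0,y_j,y_{j+1},\ldots,y_{n-1}) - K(y_0^0,\ldots,y_j^0,y_{j+1},\ldots,y_{n-1}),
\]
so that $K = K(\underline{y}^0)+\sum_j K^{(j)}$. Each $K^{(j)}$ depends only on coordinates indexed by $j,j+1,\ldots,n-1$, has $\sup$-norm at most $\lip_j(K)\cdot\operatorname{diam}(X)=\lip_j(K)$, and is separately Lipschitz with $\lip_j(K^{(j)})=\lip_j(K)$ and $\lip_i(K^{(j)})\le 2\lip_i(K)$ for $i>j$. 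By linearity $\psi = \mathrm{const} + \sum_j \psi^{(j)}$. When $j>p$, all the variables of $K^{(j)}$ lie strictly beyond position $p$ and are therefore fixed by the future coordinates of $\underline{x}$, so $\psi^{(j)}$ is constant in $z$ and contributes $0$ to $D_p$. When $j=p$, the piece $\psi^{(p)}(z)=K^{(p)}(z,x_{p+1},\ldots,x_{n-1})$ is itself a Lipschitz function of $z$ with $BV$-norm bounded by $2\lip_p(K)$, producing the end-point term $C\rho^0\lip_p(K)$ of the claimed sum.

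The heart of the proof, and what I expect to be the main obstacle, is the range $0\le j<p$: proving $\operatorname{Var}(\psi^{(j)})\le C\rho^{p-j}\lip_j(K)$. Here
\[
\psi^{(j)}(z) \;=\; \mathbb{E}\bigl(K^{(j)}(X_j,\ldots,X_{p-1},z,x_{p+1},\ldots,x_{n-1}) \bigm| X_p = z\bigr)
\]
depends on $z$ both through the conditioning $X_p=z$ \emph{and} through the explicit slot $y_p=z$ inside $K^{(j)}$, so a naive application of the spectral gap to a single-variable function does not work directly. The strategy is to pass to the Perron--Frobenius operator $P$ w.r.t.~Lebesgue via the identity $L\phi = h^{-1}P(h\phi)$: using that $BV$ is a Banach algebra in one dimension and that $h$ (as well as $1/h$) lies in $BV$, one can express $\psi^{(j)}(z)h(z)$ as a $(p-j)$-fold iterate of the annealed $P$ acting on an auxiliary function built from $K^{(j)}$ expanded along random orbits (cleanest when $K^{(j)}$ factorizes, and reduced to this case by a second round of the telescoping trick applied to the intermediate variables $y_{j+1},\ldots,y_{p-1}$). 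Applying the Lasota--Yorke inequality $p-j$ times and invoking the spectral gap $\|P^k(f-h\int f\,dm)\|_{BV}\le C\rho^k\|f\|_{BV}$ at the final stage extracts the factor $\rho^{p-j}$; crucially, the uniform bound $\|K^{(j)}\|_\infty\le\lip_j(K)$ together with the lower bound on $h$ absorbs the contributions of $\lip_i(K^{(j)})$ for $i>j$ into a constant independent of those Lipschitz constants. Summing the estimates over $j=0,1,\ldots,p$ yields the claimed bound
\[
|D_p| \;\le\; C\sum_{j=0}^p \rho^{p-j}\lip_j(K).
\]
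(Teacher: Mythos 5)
Your reduction of $D_p$ to a one-variable problem is fine: the identity $D_p=\psi(x_p)-L\psi(x_{p+1})$ (with the future coordinates frozen as parameters), the bound $|D_p|\le C\|\psi-\int\psi\,d\mu\|_{BV}$, the telescoping idea and the passage $L\phi=h^{-1}P(h\phi)$ are all in the spirit of the paper's argument. The gap is exactly at the step you flag as the heart of the proof: the per-piece estimate $\mathrm{Var}(\psi^{(j)})\le C\rho^{p-j}\lip_j(K)$ is false, because your telescoping goes in the wrong direction. Your piece $K^{(j)}$ depends on \emph{all} coordinates $y_j,\dots,y_{n-1}$, in particular on those adjacent to the conditioning time $p$, so there is no gap of length $p-j$ across which the spectral gap of $L$ can act. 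Concretely, take $K(y_0,\dots,y_{n-1})=\phi(y_j)\,g(y_{p-1})$ with $\|\phi\|_\infty,\ \lip(\phi)\le 1$ and $g$ highly oscillating, $\|g\|_\infty\le 1$, $\lip(g)=M$ large (say the single map is the doubling map, so $h\equiv 1$). Then $\lip_j(K)\le 1$, while, writing $\tilde\phi=\phi-\phi(y_j^0)$, the Markov property gives $\psi^{(j)}=L\bigl(g\cdot L^{p-1-j}\tilde\phi\bigr)=\bigl(\int\tilde\phi\,d\mu\bigr)Lg+L(gr)$ with $\|r\|_{BV}\le C\rho^{p-1-j}$. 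Since a transfer operator only contracts variation by a bounded factor ($\mathrm{Var}(Lg)$ is of order $\mathrm{Var}(g)\sim M$, not of order $\|g\|_\infty$), one gets $\mathrm{Var}(\psi^{(j)})\sim M\gg \rho^{p-j}\lip_j(K)$: the Lipschitz constants of the later coordinates cannot be ``absorbed into a constant'' using only $\|K^{(j)}\|_\infty\le\lip_j(K)$ and the lower bound on $h$. Nor can you simply let them appear in the per-piece bound: if piece $j$ contributes terms $\rho^{p-i}\lip_i(K)$ for all $i\ge j$, summing over $j$ counts $\lip_i(K)$ with multiplicity of order $i$, and $(i+1)\rho^{p-i}$ is not dominated by $(\rho')^{p-i}$; in the application where all $\lip_i(K)=1/n$ this degrades $\sup|D_p|$ from $O(1/n)$ to $O(p/n)$ and ruins the concentration inequality the proposition is designed to feed.

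The paper telescopes the other way: the piece $H_i(y_0,\dots,y_i)=K(y_0,\dots,y_i,x_\star,\dots,x_\star,x_p,\dots)-K(y_0,\dots,y_{i-1},x_\star,\dots,x_\star,x_p,\dots)$ depends only on the \emph{first} $i+1$ coordinates, i.e.\ on the far past relative to the conditioning time, so its conditional expectation given $x_p$ is exactly $L^{p-i}f_i(x_p)$ for the explicit one-variable function $f_i(y)=\sum_{\underline{\omega}\in\Omega^i}p^i_{\underline{\omega}}\sum_{T^i_{\underline{\omega}}z=y}g^{(i)}_{\underline{\omega}}(z)H_i(z,\dots,T^i_{\underline{\omega}}z)$, and the factor $\rho^{p-i}$ then comes legitimately from the spectral gap applied across the genuine gap of length $p-i$. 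The substantial work, which is absent from your sketch, is the estimate $\|f_i\|_{BV}\le C\sum_{k\le i}\lambda^{-(i-k)}\lip_k(K)$: this is where the dependence of $H_i$ on the intermediate coordinates is paid for, and it requires uniform expansion and bounded distortion of the random compositions together with the combinatorial bounds on $\sum_{\underline{\omega}}p^i_{\underline{\omega}}\sum_{I\in\mathcal{A}^{i-1}_{\underline{\omega}}}\sup_I|(T^i_{\underline{\omega}})'|^{-1}$ and the corresponding variations over the random monotonicity partitions (the paper's technical lemma, and the four-term estimate of $\mathrm{Var}(hf_i)$). Reorienting your telescoping to initial segments and supplying this $BV$ estimate is precisely what is needed to close the argument.
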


This proposition, together with the Cauchy-Schwarz inequality, implies immediately the desired concentration inequality, in the same manner as in \cite{CG}. The following lemma leads immediately to the result, using the Lipschitz condition on $K$ :

\begin{lemma}  There exists $\rho < 1$ and $C \ge 0$, depending only on $(\Omega, \mathbb{P}, T)$, such that for all $p$ and $x_p, ...$ , one has $$\left| K_p(x_p, \ldots) - \int_{\tilde{\Omega}} \int_X K(y, T_{\underline{\omega}}^1 y, \ldots, T_{\underline{\omega}}^{p-1} y, x_p, \ldots ) d\mu(y) d\tilde{\mathbb{P}}(\underline{\omega}) \right| \le C \sum_{j=0}^{p-1} \lip_j(K) \rho^{p-j}.$$

\end{lemma}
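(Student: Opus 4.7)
The proof combines the Markov structure of the canonical symbolic chain $\mu_c$ with the spectral gap of the annealed transfer operator $L$ on $\mathrm{BV}$. The guiding identification is that $K_p(x_p, x_{p+1}, \ldots)$ equals the conditional expectation $\mathbb{E}_{\mu_c}[K \mid x_p]$ (conditioning on $x_{p+1}, \ldots$ reduces to conditioning on $x_p$ alone by the Markov property), whereas the second quantity in the lemma is the expectation of $K$ against the marginal distribution of $(x_0, \ldots, x_{p-1})$ under $\mu_c$; by stationarity of $\mu$ and the IID nature of the noise, this marginal is exactly the distribution of a fresh forward orbit $(y, T_{\underline{\omega}}^1 y, \ldots, T_{\underline{\omega}}^{p-1} y)$ with $y \sim \mu$ and $\underline{\omega} \sim \tilde{\mathbb{P}}$ independent. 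So the estimate is a quantitative decoupling between the conditional and the unconditional laws.

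Next I would perform a Lipschitz telescoping decomposition of $K$ along its first $p$ coordinates. Fixing a reference point $x^\star \in [0,1]$, write
\[
 K(x_0, \ldots, x_{n-1}) = K(x^\star, \ldots, x^\star, x_p, \ldots) + \sum_{j=0}^{p-1} F^{(j)},
\]
where $F^{(j)}(x_0, \ldots, x_j; x_p, \ldots) = K(x_0, \ldots, x_j, x^\star, \ldots, x^\star, x_p, \ldots) - K(x_0, \ldots, x_{j-1}, x^\star, \ldots, x^\star, x_p, \ldots)$. Each $F^{(j)}$ depends only on $(x_0, \ldots, x_j)$, vanishes when $x_j = x^\star$, and is Lipschitz in $x_j$ with constant at most $\lip_j(K)$; hence $\|F^{(j)}\|_\infty \le \lip_j(K)$. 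The constant part is killed by both $K_p$ and the second term of the lemma, so it suffices to estimate the contribution of each $F^{(j)}$.

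For each $j$, iteratively conditioning on $x_1, x_2, \ldots, x_j$ via the Markov property shows that $\mathbb{E}_{\mu_c}[F^{(j)} \mid x_p = x] = L^{p-j} \tilde F^{(j)}(x)$, where $\tilde F^{(j)}(y) := \mathbb{E}_{\mu_c}[F^{(j)}(x_0, \ldots, x_{j-1}, y) \mid x_j = y]$ is a function of the single variable $y$, and correspondingly $\mathbb{E}_{\mu_c}[F^{(j)}] = \int \tilde F^{(j)} \, d\mu$. Therefore
\[
 \mathbb{E}_{\mu_c}[F^{(j)} \mid x_p] - \mathbb{E}_{\mu_c}[F^{(j)}] = L^{p-j}\!\Bigl(\tilde F^{(j)} - \textstyle\int \tilde F^{(j)} \, d\mu\Bigr)(x_p),
\]
and the spectral gap of $L$ on $\mathrm{BV}$, which follows from the Lasota--Yorke inequality of Proposition~\ref{gapLY} and the mixing assumption (Proposition~\ref{rc_1D}), bounds this in absolute value by $C \rho^{p-j} \|\tilde F^{(j)} - \int \tilde F^{(j)} \, d\mu\|_{\rm BV}$. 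Summing over $j$ and using the triangle inequality yields the desired inequality, once one has shown $\|\tilde F^{(j)} - \int \tilde F^{(j)} \, d\mu\|_{\rm BV} \le C \lip_j(K)$.

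The main obstacle is precisely this last estimate. The $L^\infty$ bound $\|\tilde F^{(j)}\|_\infty \le \|F^{(j)}\|_\infty \le \lip_j(K)$ is immediate, but controlling the total variation of $\tilde F^{(j)}$ by $\lip_j(K)$ alone is delicate because $\tilde F^{(j)}(y)$ depends on $y$ both through $F^{(j)}$ and through the reverse-Markov conditional distribution of $(x_0, \ldots, x_{j-1})$ given $x_j = y$. Writing $\tilde F^{(j)}(y)$ as an iterated nested expression $L[\,z \mapsto L[\cdots L[F^{(j)}(\cdot,\ldots,\cdot,y)]\cdots](z)\,](y)$, one exploits the continuity of $L$ on $\mathrm{BV}$ (the uniform Lasota--Yorke bound) together with the fact that $F^{(j)}$ vanishes at $x_j = x^\star$ so as to propagate the $\lip_j(K)$-size control through the $j$ nested applications; this is the technical heart of the argument and it is here that the hypotheses $\inf h > 0$ and continuity of the embedding $\mathrm{BV} \hookrightarrow L^\infty$ are used. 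Once that BV estimate is in hand, summing the individual bounds $C \rho^{p-j}\lip_j(K)$ completes the proof.
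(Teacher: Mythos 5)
Your reduction coincides with the paper's: telescope $K$ at a reference point $x^\star$, write $K_p(x_p,\ldots)=K(x^\star,\ldots,x^\star,x_p,\ldots)+\sum_{j=0}^{p-1}L^{p-j}f_j(x_p)$ where $f_j$ (your $\tilde F^{(j)}$) is the backward conditional expectation of $H_j$ given $x_j=y$, identify $\int f_j\,d\mu$ with the annealed average of $H_j$, and invoke the spectral gap of $L$ on ${\rm BV}$ together with ${\rm BV}\hookrightarrow L^\infty$. The problem is the step you defer. It is not a detail: the ${\rm BV}$ estimate on $f_j$ is the entire remaining content of the proof, and ``continuity of $L$ on ${\rm BV}$'' does not yield it. One must decompose $hf_j$ over the inverse branches of each $T^j_{\underline{\omega}}$ and use uniform expansion ($\lambda(T^j_{\underline{\omega}})\ge\lambda^j$, available because every $\lambda(T_\omega)>1$ in this section), bounded distortion, and bounds on $\sum_{\underline{\omega}}p^j_{\underline{\omega}}\sum_I\sup_I|(T^j_{\underline{\omega}})'|^{-1}$ and its variation; the crucial geometric input is that the image $T^k_{\underline{\omega}}(I)$ of a monotonicity interval $I$ of $T^j_{\underline{\omega}}$ has length at most $\lambda^{-(j-k)}$, which is what controls the variation of $z\mapsto H_j(z,T^1_{\underline{\omega}}z,\ldots,T^j_{\underline{\omega}}z)$. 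None of this follows from the mere boundedness of $L$ on ${\rm BV}$.

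More seriously, the estimate you announce as the missing ingredient, $\|\tilde F^{(j)}-\int\tilde F^{(j)}\,d\mu\|_{\rm BV}\le C\,\lip_j(K)$, is false in general. Since $H_j$ depends on all coordinates $x_0,\ldots,x_j$ and varying $y$ moves the whole backward orbit, the variation of $f_j$ necessarily picks up every $\lip_k(K)$ with $k\le j$: for instance, with $K(x_0,x_1)=\phi(x_0)\psi(x_1)$ and $\phi$ highly oscillatory of small amplitude, ${\rm Var}(f_1)$ is of order $\lambda^{-1}\lip_0(K)$, which can dwarf $\lip_1(K)=\|\phi\|_\infty\,\lip(\psi)$. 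The bound that actually holds (and that the paper proves) is $\|f_j\|_{\rm BV}\le C\sum_{k=0}^{j}\lambda^{-(j-k)}\lip_k(K)$, and the lemma then requires one further step you omit: resumming the double sum $\sum_{j=0}^{p-1}\rho^{p-j}\sum_{k=0}^{j}\lambda^{-(j-k)}\lip_k(K)\le C\sum_{k=0}^{p-1}(\rho')^{p-k}\lip_k(K)$ for any $\rho'$ with $\max(\rho,\lambda^{-1})<\rho'<1$. So the architecture of your argument is the right one, but as written the key estimate both lacks a proof and is aimed at an inequality that cannot hold; it must be weakened as above and then combined with the geometric resummation.
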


The rest of this section is devoted to the proof of this lemma.
For a sequence $\underline{\omega} \in \tilde{\Omega}$, we denote $g^{(p)}_{\underline{\omega}}(y) = \frac{h(y)}{h(T_{\underline{\omega}}^p y)} \frac{1}{|(T_{\underline{\omega}}^p)'(y)|}$. We have $$K_p(x_p, \ldots) = \sum_{\underline{\omega} \in \Omega^p} p_{\underline{\omega}}^p \sum_{T_{\underline{\omega}}^p y =x} g_{\underline{\omega}}^{(p)}(y) K(y, T_{\underline{\omega}}^1 y, \ldots, T_{\underline{\omega}}^{p-1} y, x_p \ldots).$$

We fix a $x_{\star} \in X$, and we decompose $K_p$ as $$K_p(x_p, \ldots) = K(x_{\star}, \ldots, x_{\star}, x_p, \ldots) + \sum_{i=0}^{p-1} \sum_{\underline{\omega} \in \Omega^p} p_{\underline{\omega}}^p \sum_{T_{\underline{\omega}}^p y =x} g_{\underline{\omega}}^{(p)}(y) H_i(y, \ldots, T_{\underline{\omega}}^i y),$$ where $H_i(y_0, \ldots, y_i) = K(y_0, \ldots, y_i, x_{\star}, \ldots, x_{\star}, x_p, \ldots) - K (y_0, \ldots, y_{i-1}, x_{\star}, \ldots, x_{\star}, x_p, \ldots)$.

A simple computation then shows that $K_p(x_p, \ldots) = K(x_{\star}, \ldots, x_{\star}, x_p, \ldots) + \sum_{i=0}^{p-1} L^{p-i} f_i(x_p)$, with $$f_i(y) = \sum_{\underline{\omega} \in \Omega^i} p_{\underline{\omega}}^i \sum_{T_{\underline{\omega}}^i z = y } g_{\underline{\omega}}^{(i)}(z) H_i(z, \ldots, T_{\underline{\omega}}^i z).$$

From the spectral gap of $L$, we deduce that there exists $C \ge 0$ and $\rho < 1$ depending only on the system, such that $ \| L^{p-i} f_i - \int_X f_i d \mu \|_{{\rm BV}} \le C \rho^{p-i} \|f_i\|_{{\rm BV}}$. On one hand, since the $\rm BV$-norm dominates the supremum norm, one has $\left|  L^{p-i} f_i(x_p) - \int_X f_i d \mu\right| \le C \rho^{p-i} \|f_i\|_{{\rm BV}}$. On the other hand, one has easily $\int_X f_i d\mu = \int_{\tilde{\Omega}} \int_X H_i(y, \ldots, T_{\underline{\omega}}^i y) d\mu(y) d\tilde{\mathbb{P}}(\underline{\omega})$, from which it follows, summing all the relations, that $$\left| K_p(x_p, \ldots) - \int_{\tilde{\Omega}} \int_X K(y, T_{\underline{\omega}}^1 y, \ldots, T_{\underline{\omega}}^{n-1} y, x_p, \ldots ) d\mu(y) d\tilde{\mathbb{P}}(\underline{\omega}) \right| \le C \sum_{i=0}^{p-1} \rho^{p-i} \|f_i\|_{\rm BV}.$$

It remains to estimate $\|f_i\|_{\rm BV} \le \|f_i\|_{\rm sup} + {\rm Var}(f_i)$. For this, we'll need a technical lemma. For $\omega \in \Omega$, we denote by $\mathcal{A}_{\omega}$ the partition of monotonicity of $T_{\omega}$, and for $\underline{\omega} \in \Omega^{\mathbb{N}}$, we define $\mathcal{A}_{\underline{\omega}}^{n-1} = \bigvee_{k = 0}^{n-1} \left(T_{\underline{\omega}}^k \right)^{-1} \left(\mathcal{A}_{\omega_{k+1}}\right)$, which is the partition of monotonicity of $T_{\underline{\omega}}^n$.

If $T$ is a Lasota-Yorke map of the interval, with partition of monotonicity $\mathcal{A}$, we define its distorsion ${\rm Dist}(T)$ as the least constant $C$ such that $|T'(x) - T'(y)| \le C |T'(x)| |Tx - Ty|$ for all $x,y \in I$ and $I \in \mathcal{A}$.

\begin{lemma}  \label{conc_lem} There exists $\lambda > 1$ and $C \ge 0$ so that, for all $\underline{\omega} \in \Omega^{\mathbb{N}}$ and $n \ge 0$ :

\begin{enumerate}

\item $\lambda(T_{\underline{\omega}}^n) \ge \lambda^n$,

\item ${\rm Dist}(T_{\underline{\omega}}^n) \le C$,

\item $\sum_{\underline{\omega} \in \Omega^n} p_{\underline{\omega}}^n \sum_{I \in \mathcal{A}_{\underline{\omega}}^{n-1}} \sup_I \frac{1}{|(T_{\underline{\omega}}^n)'|} \le C$,

\item $\sum_{\underline{\omega} \in \Omega^n} p_{\underline{\omega}}^n\sum_{I \in \mathcal{A}_{\underline{\omega}}^{n-1}} {\rm Var}_I \left( \frac{1}{|(T_{\underline{\omega}}^n)'|} \right) \le C$.

\end{enumerate}

\begin{proof} \em

\begin{enumerate}

\item is obvious, since $\Omega$ is a finite set.

\item This is a classical computation. It follows from $(1)$ and the chain rule.

\item This an easy adaptation of lemma II.4 in \cite{CMS}. For any $\underline{\omega} \in \Omega^n$, and $I \in \mathcal{A}_{\underline{\omega}}^{n-1}$, there exists a least integer $p = p_{\underline{\omega}, I}$ such that $T_{\underline{\omega}}^{p}(I) \cap \partial \mathcal{A}_{\omega_{p+1}} \neq \emptyset$. We denote by $\mathcal{A}_{\underline{\omega}}^{n-1,p}$ the set of all $I \in \mathcal{A}_{\underline{\omega}}^{n-1}$ for which we have $p = p_{\underline{\omega}, I}$. We define $\partial = \cup_{\omega \in \Omega} \partial \mathcal{A}_{\omega}$. Fix $I \in \mathcal{A}_{\underline{\omega}}^{n-1,p}$. There exists $a \in \partial I$ such that $b = T_{\underline{\omega}}^p a \in \partial$. From (2), we deduce the existence of a constant $C$, depending only on the system, such that, for any $x \in I$, $$|(T_{\underline{\omega}}^n)' (x)| \ge C |(T_{\underline{\omega}}^n)' (a)| = C |(T_{\omega_n} \circ \ldots \circ T_{\omega_{p+1}})' (b)| |(T_{\underline{\omega}}^p)' (a)| \ge C \lambda^{n-p} |(T_{\underline{\omega}}^p)' (a)|.$$ One has then $\sup_I \frac{1}{|(T_{\underline{\omega}}^n)'|} \le C^{-1} \lambda^{-(n-p)} \frac{1}{|(T_{\underline{\omega}}^p)'(a)|}$. Since a pre-image by $T_{\underline{\omega}}^p$ of an element $b \in \partial$ can only belong to at most two different $I \in \mathcal{A}_{\underline{\omega}}^{n-1}$, it follows $$\begin{aligned} &\sum_{\underline{\omega} \in \Omega^n} p_{\underline{\omega}}^n \sum_{I \in \mathcal{A}_{\underline{\omega}}^{n-1}} \sup_I \frac{1}{|(T_{\underline{\omega}}^n)'|}& &\le& &2 C^{-1} \sum_{p=0}^{n-1} \lambda^{-(n-p)} \sum_{b \in \partial} \sum_{\underline{\omega} \in \Omega^n} p_{\underline{\omega}}^n  \sum_{T_{\underline{\omega}}^p a = b} \frac{1}{|(T_{\underline{\omega}}^p)'(a)|}& \\ &&  &=& &2 C^{-1} \sum_{p=0}^{n-1} \lambda^{-(n-p)} \sum_{b \in \partial} P^p \mathds{1} (b).& \end{aligned}$$ This quantity is bounded, since $P$ is power bounded, and $\partial$ is a finite set.

\item It follows from the three previous points, and the definition of the total variation.
\end{enumerate}\qed
\end{proof}

\end{lemma}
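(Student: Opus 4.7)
The four items can be attacked in order, with each using the previous.

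For item (1), since $\Omega$ is finite, set $\lambda := \min_{\omega \in \Omega} \lambda(T_\omega)$, which is $>1$ by the standing hypothesis. The chain rule applied to $T_{\underline{\omega}}^n = T_{\omega_n} \circ \cdots \circ T_{\omega_1}$ yields $|(T_{\underline{\omega}}^n)'(x)| = \prod_{k=0}^{n-1} |T'_{\omega_{k+1}}(T_{\underline{\omega}}^k x)| \ge \lambda^n$. For item (2), this is a standard bounded-distortion computation: on a monotonicity branch $I$ of $T_{\underline{\omega}}^n$ and for $x,y \in I$, I would write $\log|(T_{\underline{\omega}}^n)'(x)| - \log|(T_{\underline{\omega}}^n)'(y)| = \sum_{k=0}^{n-1} \bigl( \log|T'_{\omega_{k+1}}| \circ T_{\underline{\omega}}^k x - \log|T'_{\omega_{k+1}}| \circ T_{\underline{\omega}}^k y \bigr)$. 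Each $T_\omega$ is piecewise $C^2$ on a finite partition (and $\Omega$ is finite), so $\log|T'_\omega|$ is uniformly Lipschitz on each branch; since $T_{\underline{\omega}}^k$ expands by at least $\lambda^k$ on $I$, the distances $|T_{\underline{\omega}}^k x - T_{\underline{\omega}}^k y|$ form a geometric sequence with ratio $\lambda^{-1}$ going backward from $k = n-1$, giving a convergent bound independent of $n$ and $\underline{\omega}$.

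Item (3) is the heart of the proof and the main obstacle. The idea, adapted from Collet--Martinez--Schmitt, is to reindex branches by when they first see a partition discontinuity. Let $\partial = \bigcup_{\omega \in \Omega} \partial \mathcal{A}_\omega$, a finite set. For $I \in \mathcal{A}_{\underline{\omega}}^{n-1}$, define $p = p_{\underline{\omega}, I}$ to be the smallest integer in $\{0, \ldots, n-1\}$ with $T_{\underline{\omega}}^p(I)$ meeting $\partial \mathcal{A}_{\omega_{p+1}}$. Pick an endpoint $a$ of $I$ with $b := T_{\underline{\omega}}^p a \in \partial$. Using (2) to compare derivatives along $I$, then (1) to handle the last $n-p$ factors, yields $\sup_I 1/|(T_{\underline{\omega}}^n)'| \le C \lambda^{-(n-p)}/|(T_{\underline{\omega}}^p)'(a)|$. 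Each boundary point $b \in \partial$ arises in this way from at most two branches (the two adjacent ones sharing $a$ as a common endpoint), so after summing over $I$, regrouping by $(p, b)$, and averaging over $\underline{\omega} \in \Omega^n$, the inner sum $\sum_{\underline{\omega}} p_{\underline{\omega}}^n \sum_{T_{\underline{\omega}}^p a = b} 1/|(T_{\underline{\omega}}^p)'(a)|$ is exactly $P^p \mathds{1}(b)$. Since $P$ is power-bounded on $L^\infty$ (indeed $\|P^p \mathds{1}\|_\infty$ is uniformly controlled via the spectral gap of $P$ on $\mathrm{BV}$) and $\partial$ is finite, the total is bounded by $C \sum_{p=0}^{n-1} \lambda^{-(n-p)} \le C'$.

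Finally, item (4) follows from (2) and (3). Bounded distortion implies that on each branch $I$, the derivative $|(T_{\underline{\omega}}^n)'|$ oscillates by at most a fixed multiplicative factor, so $\mathrm{Var}_I(1/|(T_{\underline{\omega}}^n)'|) \le C \sup_I 1/|(T_{\underline{\omega}}^n)'|$; summing and applying (3) yields the claim. The key obstacle throughout is the careful bookkeeping in (3): one must verify that the two-to-one correspondence between boundary points and branches survives the randomization, which amounts to observing that the event ``first boundary hit at step $p$ at point $b$'' decomposes cleanly along pre-images of $b$ under $T_{\underline{\omega}}^p$.
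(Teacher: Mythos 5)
Your proposal is correct and follows essentially the same route as the paper: the same first-hitting-time decomposition of branches with respect to $\partial=\bigcup_{\omega}\partial\mathcal{A}_{\omega}$, the same comparison via distortion and expansion giving $\sup_I 1/|(T_{\underline{\omega}}^n)'|\le C\lambda^{-(n-p)}/|(T_{\underline{\omega}}^p)'(a)|$, the same regrouping into $\sum_{b\in\partial}P^p\mathds{1}(b)$ controlled by power-boundedness of $P$. One small point of precision for item (4): a bounded multiplicative oscillation of the derivative alone would not control total variation; what you actually need (and what the paper's definition of ${\rm Dist}$ provides) is the Lipschitz-type estimate $|(T_{\underline{\omega}}^n)'(x)-(T_{\underline{\omega}}^n)'(y)|\le C|(T_{\underline{\omega}}^n)'(x)|\,|T_{\underline{\omega}}^n x-T_{\underline{\omega}}^n y|$, which upon summing over a partition of $I$ gives ${\rm Var}_I\bigl(1/|(T_{\underline{\omega}}^n)'|\bigr)\le C\sup_I 1/|(T_{\underline{\omega}}^n)'|$ and then (3) applies as you say.
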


Since $L^i \mathds{1} = \mathds{1}$, one has $\| f_i \|_{\rm sup} \le \| H_i \|_{\rm sup} \le \lip_i(K)$. The crucial point lies in the estimate of the variation of $f_i$. We first note that $$ {\rm Var} (f_i) \le {\rm Var}(\frac{1}{h}) \|h f_i \|_{\rm sup} + \| \frac{1}{h} \|_{\rm sup} {\rm Var}( h f_i).$$ Since $\|h f_i\|_{\rm sup} \le \lip_i(K) \| P^i h \|_{\rm sup} \le C \lip_i(K)$, one has just to estimate ${\rm Var}(h f_i)$.

For $\underline{\omega} \in \Omega^i$ and $I \in \mathcal{A}_{\underline{\omega}}^{i-1}$, we denote by $S_{i, I, \underline{\omega}}$ the inverse branch of $T_{\underline{\omega}}^i$ restricted to $I$. We define also $H_{i, \underline{\omega}}(z) = H_i(z, \ldots, T_{\underline{\omega}}^i z)$.

Then, we can write $$h f_i = \sum_{\underline{\omega} \in \Omega^i} p_{\underline{\omega}}^i \sum_{I \in \mathcal{A}_{\underline{\omega}}^{i-1}} \left( \frac{h H_{i, \underline{\omega}}}{|(T_{\underline{\omega}}^i)'|} \right) \circ S_{i, I, \underline{\omega}} \,  \mathds{1}_{T_{\underline{\omega}}^i(I)}.$$

It follows that $$\begin{aligned} {\rm Var}(h f_i) &\le& &\sum_{\underline{\omega} \in \Omega^i} p_{\underline{\omega}}^i \left( \sum_{I \in \mathcal{A}_{\underline{\omega}}^{i-1}} {\rm Var}_I \left( \frac{h H_{i, \underline{\omega}}}{|(T_{\underline{\omega}}^i)'|} \right) + 2 \sum_{a \in \partial \mathcal{A}_{\underline{\omega}}^{i-1}} \frac{|h(a)| |H_{i,\underline{\omega}}(a)|}{|(T_{\underline{\omega}}^i)'(a)|} \right)& \\ & \le & &\sum_{\underline{\omega} \in \Omega^i} p_{\underline{\omega}}^i \left( {\rm I}_{\underline{\omega},i} + {\rm II}_{\underline{\omega},i} + {\rm III}_{\underline{\omega},i} + {\rm IV}_{\underline{\omega},i}\right)&, \end{aligned}$$ where

$$\begin{aligned} &{\rm I}_{\underline{\omega},i}& &=& & \sum_{I \in \mathcal{A}_{\underline{\omega}}^{i-1}} {\rm Var}_I (h) \, \sup_I \frac{1}{|(T_{\underline{\omega}}^i)'|} \, \sup_I |H_{i,\underline{\omega}}|,&  \\
&{\rm II}_{\underline{\omega},i}& &=&  &\sum_{I \in \mathcal{A}_{\underline{\omega}}^{i-1}} \sup_I h \, {\rm Var}_I\left(\frac{1}{|(T_{\underline{\omega}}^i)'|}\right) \, \sup_I |H_{i,\underline{\omega}}|,&  \\
&{\rm III}_{\underline{\omega},i}& &=&  &\sum_{I \in \mathcal{A}_{\underline{\omega}}^{i-1}} \sup_I h \, \sup_I \frac{1}{|(T_{\underline{\omega}}^i)'|} \, {\rm Var}_I(H_{i,\underline{\omega}}), & \\
&{\rm IV}_{\underline{\omega},i}& &=& &2 \sum_{a \in \partial \mathcal{A}_{\underline{\omega}}^{i-1}} \frac{|h(a)| |H_{i,\underline{\omega}}(a)|}{|(T_{\underline{\omega}}^i)'(a)|}.& \end{aligned}$$

Using the Lipschitz condition for $K$, one gets ${\rm I}_{\underline{\omega},i} \le C \lip_i(K) \sum_{I \in \mathcal{A}_{\underline{\omega}}^{i-1}} \sup_I \frac{1}{|(T_{\underline{\omega}}^i)'|}$, which gives, by lemma \ref{conc_lem}, $\sum_{\underline{\omega} \in \Omega^i} p_{\underline{\omega}}^i \, {\rm I}_{\underline{\omega},i} \le C \lip_i(K)$. The same argument applies to prove that $\sum_{\underline{\omega} \in \Omega^i} p_{\underline{\omega}}^i  \, {\rm II}_{\underline{\omega},i} \le C \lip_i(K)$.

We turn now to the estimate of ${\rm III}_{\underline{\omega}, i}$. Let $y_0 < \ldots < y_l$ be a sequence of points of $I$. In order to estimate $\sum_{j=0}^{l-1} | H_{i,\underline{\omega}}(y_{j+1}) -H_{i,\underline{\omega}}(y_j) |$, we split $H_{i, \underline{\omega}}$ into two terms in an obvious way, and we deal with the first one, the second being completely similar. We have $$\begin{aligned} &\sum_{j=0}^{l-1} \sum_{k=0}^i |K(y_{j+1}, \ldots, T_{\underline{\omega}}^k y_{j-1}, T_{\underline{\omega}}^{k+1} y_j, \ldots,T_{\underline{\omega}}^i y_j, \ldots ) - K(y_{j+1}, \ldots, T_{\underline{\omega}}^{k-1} y_{j+1}, T_{\underline{\omega}}^k y_j, \ldots,T_{\underline{\omega}}^i y_j, \ldots ) |& \\ &\le \sum_{j=0}^{l-1} \sum_{k=0}^i \lip_k(K) \left|T_{\underline{\omega}}^k y_{j+1} - T_{\underline{\omega}}^k y_j \right| = \sum_{k=0}^i \lip_k(K) m(T_{\underline{\omega}}^k(I)).&  \end{aligned} $$

Since $I \in \mathcal{A}_{\underline{\omega}}^{i-1}$, $T_{\underline{\omega}}^k(I)$ is included in an interval of monotonicity of $T_{\omega_i} \circ \ldots \circ T_{\omega_{k+1}}$, and hence its length is less than $\left( \lambda(T_{\omega_i} \circ \ldots \circ T_{\omega_{k+1}}) \right)^{-1} \le \lambda^{-(i-k)}$.
Therefore, one has ${\rm Var}_I(H_{i, \underline{\omega}}) \le \sum_{k=0}^i \lambda^{-(i-k)} \lip_k(K)$. An application of lemma \ref{conc_lem} shows that $\sum_{\underline{\omega} \in \Omega^i} p_{\underline{\omega}}^i  \, {\rm III}_{\underline{\omega},i} \le C \sum_{k=0}^i \lambda^{-(i-k)} \lip_k(K)$.

Using again lemma \ref{conc_lem} and Lipschitz condition on $K$, we can bound the last term by $\sum_{\underline{\omega} \in \Omega^i} p_{\underline{\omega}}^i  \, {\rm IV}_{\underline{\omega},i} \le C \lip_i(K)$.

Finally, putting together all the estimates, we find that ${\rm Var}(h f_i) \le C \sum_{k=0}^i \lambda^{-(i-k)} \lip_k(K)$, which gives ${\rm Var}(f_i) \le  C \sum_{k=0}^i \lambda^{-(i-k)} \lip_k(K)$, and the same estimate for $\|f_i\|_{\rm BV}$.

We then have $$\left| K_p(x_p, \ldots) - \int_{\tilde{\Omega}} \int_X K(y, T_{\underline{\omega}}^1 y, \ldots, T_{\underline{\omega}}^{p-1} y, x_p, \ldots ) d\mu(y) d\tilde{\mathbb{P}}(\underline{\omega}) \right| \le C \sum_{i=0}^{p-1}  \rho^{p-i} \sum_{k=0}^i \lambda^{-(i-k)} \lip_k(K).$$ A simple calculation shows that this term is less than $C \sum_{k=0}^{p-1} (\rho')^{p-k} \lip_k(K)$, for $\max(\rho, \lambda^{-1}) < \rho ' < 1$. This concludes the proof. \qed

Concentration inequalities have several statistical applications concerning the empirical measure, the shadowing, the integrated periodogram, the correlation dimension, the kernel density estimation, the almost-sure CLT, ...
We describe here an application to the rate of convergence of the empirical measure to the stationary measure, and refer the reader to \cite{CC, CCS, CG, CMS} for others possibilities. We also mention the work of Maldonado \cite{Mal}, where concentration inequalities are proved in a random context. He considers the so-called observational noise, where the randomness doesn't affect the dynamics, but only the observations, so the setup is somewhat different from ours, but once an annealed concentration inequality is established, all consequences are derived in a similar way.

The empirical measure is the random measure defined by $$\mathcal{E}_n(x,  \underline{\omega}) = \frac{1}{n} \sum_{j=0}^{n-1} \delta_{T_{\underline{\omega}}^j x}.$$

Since the skew-product system $(X \times \tilde{\Omega}, S, \mu \otimes \tilde{\mathbb{P}})$ is ergodic, it follows from Birkhoff's theorem that $\mathcal{E}_n(x,  \underline{\omega})$ converges weakly to the stationary measure $\mu$, for $\mu \otimes \tilde{\mathbb{P}}$-ae $(x,  \underline{\omega})$. For statistical purposes, it proves useful to estimate the speed of  this convergence. We introduce the Kantorovitch distance $\kappa$ on the space of probability measures on $[0,1]$. For any $\nu_1, \nu_2$ probabilities measure on the unit interval, their Kantorovitch distance $\kappa(\nu_1, \nu_2)$ is equal to $$\kappa(\nu_1, \nu_2) = \int_0^1 |F_{\nu_1}(t) - F_{\nu_2}(t)| dt,$$ where $F_{\nu}(t) = \nu([0,t])$ is the distribution function of $\nu$. We show the following :

\begin{proposition}

The exists $t_0 > 0$ and $C > 0$ such that for all $t > t_0$ and $n \ge 1$ :$$\mu \otimes \tilde{\mathbb{P}} \left( \{ (x,  \underline{\omega}) \, / \, \kappa(\mathcal{E}_n(x,  \underline{\omega}), \mu) > \frac{t}{\sqrt{n}} \} \right) \le e^{-C t^2}.$$

\end{proposition}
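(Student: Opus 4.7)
The strategy is to apply the concentration inequality of Theorem \ref{exp_CI} to the functional
\[
K(x_0,\ldots,x_{n-1}) = \kappa\!\left(\frac{1}{n}\sum_{j=0}^{n-1}\delta_{x_j},\, \mu\right) = \int_0^1\!\!\left|\frac{1}{n}\sum_{j=0}^{n-1}\mathds{1}_{[0,s]}(x_j) - \mu([0,s])\right|ds,
\]
so that $K(x,T_{\underline{\omega}}^1 x,\ldots,T_{\underline{\omega}}^{n-1}x) = \kappa(\mathcal{E}_n(x,\underline{\omega}),\mu)$. The first step is the verification that $K$ is separately Lipschitz. Replacing $x_i$ by $x_i'$ alters $\frac{1}{n}\sum_j \mathds{1}_{[0,s]}(x_j)$ by $\frac{1}{n}$ exactly when $s$ lies between $x_i$ and $x_i'$, and by $0$ otherwise; integrating in $s$ and using the triangle inequality gives $\lip_i(K) \le \frac{1}{n}$ uniformly in $i$. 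Thus $\sum_{i=0}^{n-1}\lip_i^2(K) \le \frac{1}{n}$, and Theorem \ref{exp_CI} together with Markov's exponential inequality yields
\[
\mu\otimes\tilde{\mathbb{P}}\bigl(K - \mathbb{E}_{\mu\otimes\tilde{\mathbb{P}}}(K) > s\bigr) \le \exp\!\left(-\frac{s^2 n}{4C}\right) \qquad \text{for every } s>0.
\]

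The second step is to estimate the expectation $m_n := \mathbb{E}_{\mu\otimes\tilde{\mathbb{P}}}(\kappa(\mathcal{E}_n,\mu))$ and show that it is of order $n^{-1/2}$. Using Fubini and Cauchy--Schwarz,
\[
m_n \le \int_0^1 \left(\mathbb{E}_{\mu\otimes\tilde{\mathbb{P}}}\left[\left(\frac{1}{n}\sum_{j=0}^{n-1}\bigl(\mathds{1}_{[0,s]}(T_{\underline{\omega}}^j x) - \mu([0,s])\bigr)\right)^{2}\right]\right)^{1/2}\! ds.
\]
Expanding the square and using the identity of the lemma in the proof of Proposition \ref{green_kubo}, the bracketed expectation is $\frac{1}{n^2}\sum_{j,k}\int \varphi_s\, U^{|k-j|}\varphi_s\, d\mu$ with $\varphi_s = \mathds{1}_{[0,s]} - \mu([0,s])$. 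Since $\varphi_s \in {\rm BV}$ with $\|\varphi_s\| \le 3$ uniformly in $s$, Proposition \ref{decay} gives $|\int \varphi_s U^k \varphi_s d\mu| \le C\lambda^k$ uniformly in $s$, so the double sum is $O(n)$ and the expectation is $O(1/n)$. Consequently $m_n \le C_1/\sqrt{n}$ for some constant $C_1$ depending only on the system.

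Combining the two steps, take $t_0 = 2C_1$. For $t>t_0$ and any $n\ge 1$ one has $t/\sqrt{n} - m_n \ge t/(2\sqrt{n})$, whence
\[
\mu\otimes\tilde{\mathbb{P}}\!\left(\kappa(\mathcal{E}_n,\mu) > \tfrac{t}{\sqrt{n}}\right) \le \mu\otimes\tilde{\mathbb{P}}\!\left(K - m_n > \tfrac{t}{2\sqrt{n}}\right) \le \exp\!\left(-\frac{t^2}{16C}\right),
\]
which is the desired bound with $C = 1/(16C)$ (relabelling the constant). The only nontrivial step is the mean bound, whose proof rests on the \emph{uniform} ${\rm BV}$-control of the indicator functions $\mathds{1}_{[0,s]}$ combined with the decay of correlations from Section~\ref{spectral}; everything else reduces to applying the already-established concentration inequality to a very mildly Lipschitz functional.
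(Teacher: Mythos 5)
Your proposal is correct and follows essentially the same route as the paper: the same functional $K_n(x_0,\ldots,x_{n-1})=\int_0^1 |F_{n,s}-F_\mu(s)|\,ds$ with $\lip_i(K_n)\le \frac{1}{n}$, the deviation consequence of Theorem \ref{exp_CI}, and a Cauchy--Schwarz plus annealed decay-of-correlations argument (with $\mathds{1}_{[0,s]}$ uniformly bounded in ${\rm BV}$) showing $\mathbb{E}_{\mu\otimes\tilde{\mathbb{P}}}(\kappa(\mathcal{E}_n,\mu))=\mathcal{O}(n^{-1/2})$. The only cosmetic difference is that you make the choice $t_0=2C_1$ explicit, which the paper leaves implicit.
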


\begin{proof}

We follow closely the proof of Theorem III.1 in \cite{CMS}.
For $t \in [0,1]$, define the function of $n$ variables $$K_n(x_0, \ldots, x_{n-1}) = \int_0^1 | F_{n,t}(x_0, \ldots, x_{n-1}) - F_{\mu}(t)| dt,$$ where $F_{n,t}$ is given by $$F_{n,t}(x_0, \ldots, x_{n-1}) = \frac{1}{n} \sum_{k=0}^{n-1} \mathds{1}_{[0,t]} (x_k).$$

We clearly have $\kappa(\mathcal{E}_n(x,  \underline{\omega}), \mu) = K_n(x, \ldots, T_{\underline{\omega}}^{n-1} x)$, and $\lip_j(K_n) \le \frac{1}{n}$ for any $0 \le j \le n-1$. We derive immediately from the exponential concentration inequality (see the remark just below Theorem \ref{exp_CI}) that $$\mu \otimes \tilde{\mathbb{P}} \left( \{ (x,  \underline{\omega}) \, / \, \kappa(\mathcal{E}_n(x,  \underline{\omega}), \mu) - \mathbb{E}_{\mu \otimes \tilde{\mathbb{P}}} \left(\kappa(\mathcal{E}_n(.), \mu) \right)  >  \frac{t}{\sqrt{n}} \} \right) \le e^{-C t^2}.$$

To conclude, it is then sufficient to prove that $\mathbb{E}_{\mu \otimes \tilde{\mathbb{P}}} \left(\kappa(\mathcal{E}_n(.), \mu) \right)$ is of order $\frac{1}{\sqrt{n}}$.

Using Schwartz inequality, we have $$\begin{aligned} \mathbb{E}_{\mu \otimes \tilde{\mathbb{P}}} \left(\kappa(\mathcal{E}_n(.), \mu) \right) &=& &\int_0^1 \left( \int_{X \times \tilde{\Omega}} |F_{n,t}(x, \ldots, T_{\underline{\omega}}^{n-1} x) - F_{\mu}(t)| d\mu(x) d\tilde{\mathbb{P}}(\underline{\omega}) \right) dt& \\ & \le & & \left[ \int_0^1 \left( \int_{X \times \tilde{\Omega}} |F_{n,t}(x, \ldots, T_{\underline{\omega}}^{n-1} x) - F_{\mu}(t)|^2 d\mu(x) d\tilde{\mathbb{P}}(\underline{\omega}) \right) dt \right]^{\frac{1}{2}}. \end{aligned}$$

Expanding the square and using the invariance of $\mu \otimes \tilde{\mathbb{P}}$ by the skew-product, we obtain $$\begin{aligned} \int_{X \times \tilde{\Omega}} |F_{n,t}(x, \ldots, T_{\underline{\omega}}^{n-1} x) - F_{\mu}(t)|^2 d\mu(x) d\tilde{\mathbb{P}}(\underline{\omega}) = \frac{1}{n} \int_0^1 (f_t - F_{\mu}(t))^2 d \mu \\ + \frac{2}{n} \sum_{k=1}^{n-1} \left( 1 - \frac{k}{n} \right) \int_X (f_t - F_{\mu}(t))( U^k f_t - F_{\mu}(t)) d \mu, \end{aligned}$$ where $f_t$ is the characteristic function of $[0,t]$ and $U^k f_t(x) = \int_{\tilde{\Omega}} f_t(T_{\underline{\omega}}^k x) d \tilde{\mathbb{P}}(\underline{\omega})$ as usual.

Since $F_{\mu}(t) = \int_X f_t d \mu$ and $f_t$ is bounded independently of $t$ in ${\rm BV}$, we can use exponential decay of annealed correlations to get $\int_X (f_t - F_{\mu}(t))( U^k f_t - F_{\mu}(t)) d \mu = \mathcal{O}(\lambda^k)$, where $\lambda < 1$, independently of $t$. This shows $ \int_{X \times \tilde{\Omega}} |F_{n,t}(x, \ldots, T_{\underline{\omega}}^{n-1} x) - F_{\mu}(t)|^2 d\mu(x) d\tilde{\mathbb{P}}(\underline{\omega}) = \mathcal{O}(n^{-1})$ and after integration over $t$, we finally get $\mathbb{E}_{\mu \otimes \tilde{\mathbb{P}}} \left(\kappa(\mathcal{E}_n(.), \mu) \right) = \mathcal{O}(n^{-\frac{1}{2}})$. \qed
\end{proof}

\end{document}